\documentclass[11pt,a4paper,reqno]{amsart} 
\usepackage{anysize}
\marginsize{3.5cm}{3.5cm}{2.2cm}{2.2cm}

\usepackage{hyperref} 
\hypersetup{
    colorlinks=true,       
    linkcolor=red,          
    citecolor=blue,        
    filecolor=magenta,      
    urlcolor=cyan           
}
\usepackage{standalone}
\usepackage[english]{babel}
\usepackage{amsmath}
\usepackage{amsfonts,amssymb}
\usepackage{enumerate}
\usepackage{mathrsfs}
\usepackage{amsthm}
\setcounter{tocdepth}{1}
\usepackage{tikz}
\usetikzlibrary{intersections,decorations.markings}
 \usetikzlibrary{patterns}

\newfont{\bb}{msbm10 at 12pt}
\newfont{\bbt}{msbm10 at 9pt}
\def\r{\mathbb R}
\def\d{\mathbb D}
\def\n{\mathbb N}
\def\z{\mathbb Z}
\def\t{\mathbb T}

\def\b{\hbox{ B}}
\def\fr{\mathscr F}
\def\sr{\mathscr S}
\def\dr{\mathscr D}
\def\w{\mathscr W}
\def\er{\mathscr E}

\DeclareMathOperator{\deter}{det}
\DeclareMathOperator{\supp}{supp}
\DeclareMathOperator{\D}{\langle D \rangle}

\DeclareMathOperator{\singsupp}{sing \ supp}

\DeclareMathOperator{\op}{Op}

\newcommand{\norm}[1]{\left\Vert #1 \right\Vert}
\newcommand{\abs}[1]{\left\vert #1 \right\vert}
\newcommand{\set}[1]{\left\{#1\right\}}
\newcommand{\eps}{\epsilon}

\usepackage{hyperref}
\hypersetup{
    colorlinks=true,
    linkcolor=red,
    filecolor=red,      
    urlcolor=blue,
    citecolor=red,
    pdftitle={paracomposition },
    bookmarks=true,
    pdfpagemode=FullScreen,
}

\numberwithin{equation} {section}

\newcommand{\keyword}[1]{\textbf{\textit{Keywords---}} \textbf{#1}}

\theoremstyle{plain}\newtheorem{lemma}{Lemma}[section]
\theoremstyle{plain}\newtheorem{proposition}{Proposition}[section]
\theoremstyle{plain}\newtheorem{theorem}{Theorem}[section]
\theoremstyle{plain}\newtheorem{definition}{Definition}[section]
\theoremstyle{plain}\newtheorem{remark}{Remark}[section]
\theoremstyle{plain}\newtheorem{corollary}{Corollary}[section]
\theoremstyle{plain}
\theoremstyle{plain}\newtheorem{notation}{Notation}[section]
\theoremstyle{plain}\newtheorem{definition-proposition}{Definition-Proposition}[section]
\numberwithin{equation}{section}

\pagestyle{plain}

\title{On paracomposition and change of variables in Paradifferential operators}
\author{Ayman Rimah Said}

\def\notina[#1]#2{\begingroup\def\thefootnote{\fnsymbol{footnote}}\footnote[#1]{#2}\endgroup}

\begin{document}


\begin{abstract}
In this paper we revisit the hypothesis needed to define the ``paracomposition" operator, an analogue to the classic pull-back operation in the low regularity setting, first introduced by S. Alinhac in \cite{Alinhac86}. More precisely we do so in two directions. First we drop the diffeomorphism hypothesis. Secondly we give estimates in global Sobolev and Zygmund spaces. Thus we fully generalize Bony's classic paralinearasition theorem giving sharp estimates for composition in Sobolev and Zygmund spaces. In order to prove that the new class of operations benefits of symbolic calculus properties when composed by a paradifferential operator, we discuss the pull-back of pseudodifferential and paradifferential operators which then become Fourier Integral Operators. In this discussion we show that those Fourier Integral Operators obtained by pull-back are pseudodifferential or paradifferential operators if and only if they are pulled-back by a diffeomorphism that is a change of variable. We give a proof of the change of variables in paradifferential operators.

Finally we study the cutoff defining paradifferential operators and it's stability by successive composition. It is known that the cutoff becomes worse after each composition, we give a slightly refined version of the cutoffs proposed by H\"ormander in  \cite{Hormander97} for which give an optimal estimate on the support of the cutoff after composition.\\ 
\keyword{Composition, Paracomposition, Paradifferential operators, Change of variables.} 
\end{abstract}

\maketitle


\vspace{-5mm}

\tableofcontents

\vspace{-7mm}

\section{Introduction}
One of the goals of this paper is to build upon the following construction by Alinhac: given a $\rho>0$ and $C^{1+\rho}$ diffeomorphism $\chi:\Omega_1\rightarrow \Omega_2$ between two open subsets of $\r^d$, Alinhac constructed an operator $\chi^*:\mathfrak{D} '(\Omega_2)\rightarrow \mathfrak{D}'(\Omega_1)$
having analogous properties to the usual composition $u \rightarrow u \circ \chi$ but with limited dependency on the regularity of $\chi$ as for classical paradifferential operators that is the paraproduct $T_a$ is well defined from $H^s\rightarrow H^s$, for all $s$ for $a$ merely in  $ L^\infty$.

Alinhac's construction was motivated by questions that arose from the study of non linear PDEs for example: the study of the transport of a distribution's wave front by a diffeomorphism with low regularity as in the works of E. Leichtnam in \cite{Leichtnam84}, the study of the singularities of solutions to semi-linear hyperbolic evolution problems and the characteristic surfaces of the associated operators(here having low regularity), the main reference being Bony's work on the subject (\cite{Bony81},\cite{Bony79},\cite{Bony82},\cite{Bony83}). More recently in \cite{Alazard11} and \cite{Alazard14}, the Paracomposition appears naturally as the ``good variable"\footnote{The so called good unknown of Alinhac.} after a low regularity change of variable in treating the Cauchy problem for the Water Waves system with rough data. It also appears in our recent proof of the quasi-linearity of the Water Waves system \cite{Ayman19}.

Finally the construction of $\chi^*$ gives a complete linearisation formula to the composition of two functions(with one being a diffeomorphism) generalizing the classic para-linearisation theorem by Bony \cite{Bony81} in a low regularity case. 

Bony showed that for $u\in C^ \infty$ and $\chi \in H^s_{loc}, s>\frac{d}{2}$ (without the diffeomorphism hypothesis):
\[
u\circ \chi=T_{u'(\chi)}\chi +\ \text{remainder},
\]
and Alinhac showed for $u\in C^\sigma_{loc}, \sigma>1$ and $\chi \in C^{1+\rho}, \rho>0$ a diffeomorphism:
\begin{equation}\label{paracomposition_introduction_paralinearisation of compostion}
u\circ \chi=\chi^*u+T_{u'(\chi)}\chi +\ \text{remainder}.
\end{equation}

Another fundamental result obtained by Alinhac is that the operator $\chi^*$ benefits from symbolic calculus properties, that is, it conjugates paradifferential operators. Given $T_h$ a paradifferential operator, Alinhac proved a result in the form:
\[\chi^*T_hu=T_{h^*}\chi^*u+\ \text{remainder},\]
 where $h^*$ is the pulled back symbol in the case of diffeomorphisms.\\
  
The main result of this work on paracomposition generalizes Bony's and Alinhac's work by:
\begin{itemize}
 \item dropping the diffeomorphism hypothesis with a new operator $\chi^\star:\mathfrak{D} '(\Omega_2)\rightarrow \mathfrak{D}'(\Omega_1)$. $\chi^\star$ will coincide with Alinhac's operator $\chi^*$ modulo a regular remainder in the case of diffeomorphisms.
 \item Giving estimates in ``global" spaces which were of interest for us in our study of the flow map regularity associated to the Water Waves system.
 \end{itemize}
 We will then show that $\chi^\star$ benefits of symbolic calculus properties, for that we will start by discussing the pull-back of pseudodifferential and paradifferential operators by $\chi$ which then become Fourier integral operators. In this discussion we show that those Fourier Integral Operators obtained by pull-back are pseudodifferential or paradifferential operators if and only if they are pulled-back by a diffeomorphism that is a change of variable. We also give a proof to the change of variables in paradifferential operators as we could not find a reference in the literature.\\ 
 
Taking advantage of the structure of the paper, where we recall various definition in microlocal analysis, we discuss the regularisation by cutoff in the definition of paradifferential operators and it's relation to composition of two operators. Taking a symbol $a\in \Gamma^m_0$ with limited regularity we define the regularisation by cutoff:
\[
\fr_x\sigma_a(\xi,\eta)=\psi(\xi,\eta) \fr_x a(\xi,\eta),
\]
where $\phi$ is a cut-off function with support bounded away from $(\eta,0)$ and $(-\eta,\eta)$ at infinity.

We look at two types of regularisation through cutoffs, $(\psi_H^B)_{B>2}$ defined by H\"ormander in \cite{Hormander97}:
\begin{equation}\label{paracomposition_introduction_Hormander cutoff}
\psi_H^B(\eta,\xi)=0 \text{ when }
\begin{cases}
\abs{\eta}>B(\abs{\xi}+1),\\
\abs{\xi}>B(\abs{\eta+\xi}+1),
\end{cases}
\hspace*{-0,4cm}
\text{and }
\psi_H^B(\eta,\xi)=1 \text{ when } \abs{\xi}>B(\abs{\eta}+1),
\end{equation}
 and $(\psi^\eps_M)_{\eps<1}$ defined by M\'etivier in \cite{Metivier08}:
\begin{equation}\label{paracomposition_introduction_Metivier cutoff}
\psi_H^B(\eta,\xi)=0 \text{ when }
\abs{\eta}\geq \eps (\abs{\xi}+1),
\text{ and }
\psi_H^B(\eta,\xi)=1 \text{ when } \abs{\eta}\leq \frac{\eps}{2} (\abs{\xi}+1).
\end{equation}
The following figures illustrate the choice of cutoff functions in the plane $(\xi,\eta)$ when $d=1$:
\begin{figure}[h!]
\begin{tikzpicture}[xscale=1.3,yscale=0.7,dot/.style={circle,inner sep=1pt,fill,label={#1},name=#1},
 extended line/.style={shorten >=-#1,shorten <=-#1},
 extended line/.default=1cm]
\draw [->] (0,-1.5) -- (0,7.7);
\draw [->] (-4,3) -- (4,3);
\node[right] at (0,7.6) {\scriptsize $\xi$};
\node[below right] at (4,3.2) {\scriptsize $\eta$};
\coordinate (Q1) at (0,5);
\node [fill=black,inner sep=1pt,label=0:$B$] at (Q1) {};
\coordinate (Q2) at (1,7.5);
\coordinate (Q3) at (-1,7.5);
\draw[thick,black,pattern=north east lines,pattern color=black] (Q3) -- (Q1)--(Q2);
\node[above] at (Q3) {\scriptsize $\xi = -B\eta+B$};
\node[above ] at (Q2) {\scriptsize $\xi = B\eta+B$};
\node at (0.8,6.2) {\scriptsize $\psi = 1$};
\coordinate (SQ1) at (0,1);
\node [fill=black,inner sep=1pt,label=0:$-B$] at (SQ1) {};
\coordinate (SQ2) at (1,-1.5);
\coordinate (SQ3) at (-1,-1.5);
\draw[thick,black,pattern=north east lines,pattern color=black] (SQ3) -- (SQ1)--(SQ2);
\node at (0.8,0.2) {\scriptsize $\psi = 1$};
\node[below] at (SQ3) {\scriptsize $\xi = B\eta-B$};
\node[below] at (SQ2) {\scriptsize $\xi = -B\eta-B$};
\coordinate (0) at (0,3);
\coordinate (SD1) at (-4,7);
\node[left] at (SD1) {\scriptsize $\xi = -\eta$};
\coordinate (SD2) at (4,-1);
\draw (SD1) -- (SD2);
\coordinate (-B) at (-2.5,3);
\node [fill=black,inner sep=1pt,label=-45:$-B$] at (-B) {};
\coordinate (-B') at (-2.5,5.5);
\draw[dotted] (-B) -- (-B');
\coordinate (P1) at (-4,8);
\coordinate (P2) at (-4,6);
\draw[thick,black,pattern=north west lines,pattern color=red] (P1) -- (-B')--(P2);
\node[left] at (P1) {\scriptsize $\xi = \frac{B(\eta+1)}{1-B}$};
\node[left] at (P2) {\scriptsize $\xi = \frac{B(1-\eta)}{1+B}$};
\node[above right] at (-B') {\scriptsize $\psi = 0$};
\coordinate (B) at (2.5,3);
\node [fill=black,inner sep=1pt,label=-45:$B$] at (B) {};
\coordinate (B') at (2.5,0.5);
\draw[dotted] (B) -- (B');
\coordinate (P3) at (4,0);
\coordinate (P4) at (4,-2);
\draw[thick,black,pattern=north west lines,pattern color=red] (P3) -- (B')--(P4);\node[right] at (P4) {\scriptsize $\xi = \frac{B(\eta-1)}{1-B}$};
\node[right] at (P3) {\scriptsize $\xi = \frac{-B(1+\eta)}{1+B}$};
\node[above right] at (B') {\scriptsize $\psi = 0$};
\coordinate (R2) at (4,3.6);
\coordinate (R3) at (4,2.4);
\draw[thick,black,pattern=north west lines,pattern color=red] (R3) -- (B)--(R2);
\node[above] at (B) {\scriptsize $\psi = 0$};
\node[right] at (R2) {\scriptsize $\xi = \frac{\eta}{B}-1$};
\node[right] at (R3) {\scriptsize $\xi = -\frac{\eta}{B}+1$};
\coordinate (SR2) at (-4,3.6);
\coordinate (SR3) at (-4,2.4);
\draw[thick,black,pattern=north west lines,pattern color=red] (SR3) -- (-B)--(SR2);
\node[above] at (-B) {\scriptsize $\psi = 0$};
\node[left] at (SR2) {\scriptsize $\xi = -\frac{\eta}{B}-1$};
\node[left] at (SR3) {\scriptsize $\xi = \frac{\eta}{B}+1$};
\end{tikzpicture}
\caption{H\"ormander's choice of cut-off function $(\psi_H^B)_{B>2}$.}
\label{paracomposition_introduction_figure1}
 \end{figure}

\begin{figure}[h!]
\begin{tikzpicture}[xscale=1.3,yscale=0.5,dot/.style={circle,inner sep=1pt,fill,label={#1},name=#1},
 extended line/.style={shorten >=-#1,shorten <=-#1},
 extended line/.default=1cm]
\draw [->] (0,-1.5) -- (0,7.7);
\draw [->] (-4,3) -- (4,3);
\node[right] at (0,7.6) {\scriptsize $\xi$};
\node[below right] at (4,3.2) {\scriptsize $\eta$};
\coordinate (0) at (0,3);
\coordinate (SD1) at (-4,5);
\node[left] at (SD1) {\scriptsize $\xi = -\eta$};
\coordinate (SD2) at (4,1);
\draw (SD1) -- (SD2);
\node[above,fill=white] at (0,7.7) {$\psi = 1$};
\coordinate (-B1) at (-1.5,3);
\coordinate (-B2) at (-0.5,3);
\coordinate (B1) at (1.5,3);
\coordinate (B2) at (0.5,3);
\coordinate (R2) at (4,7);
\coordinate (R2') at (4,10);
\coordinate (R3) at (4,-1);
\coordinate (R3') at (4,-4);
\draw[thick,black,pattern=north west lines,pattern color=red] (R3) -- (B1)--(R2);
\draw[thick,black] (R3') -- (B2)--(R2');
\node[above] at (B1) {\scriptsize $\eps$};
\node[right] at (B2) {\scriptsize \textbf{$\frac{\eps}{2}$}};
\node[above] at (-B1) {\scriptsize $-\eps$};
\node[left] at (-B2) {\scriptsize \textbf{$-\frac{\eps}{2}$}};
\node[above,fill=white] at (3,3) {\scriptsize $\psi = 0$};
\node[right] at (R2) {\scriptsize $\xi = \frac{\eta}{\eps}-1$};
\node[right] at (R2') {\scriptsize $\xi = \frac{2\eta}{\eps}-1$};
\node[right] at (R3) {\scriptsize $\xi = -\frac{\eta}{\eps}+1$};
\node[right] at (R3') {\scriptsize $\xi = -2\frac{\eta}{\eps}+1$};
\coordinate (SR2) at (-4,7);
\coordinate (SR3) at (-4,-1);
\coordinate (SR2') at (-4,10);
\coordinate (SR3') at (-4,-4);
\draw[thick,black,pattern=north west lines,pattern color=red] (SR3) -- (-B1)--(SR2);
\draw[thick,black] (SR3') -- (-B2)--(SR2');
\node[above,fill=white] at (-3,3) {\scriptsize $\psi = 0$};
\node[left] at (SR2) {\scriptsize $\xi = -\frac{\eta}{\eps}-1$};
\node[left] at (SR3) {\scriptsize $\xi = \frac{\eta}{\eps}+1$};
\node[left] at (SR2') {\scriptsize $\xi = -\frac{2\eta}{\eps}-1$};
\node[left] at (SR3') {\scriptsize $\xi = \frac{2\eta}{\eps}+1$};
\draw[pattern=north east lines,pattern color=gray] (R2') -- (B2)--(R3')--(SR3') -- (-B2)--(SR2');
\end{tikzpicture}
\caption{M\'etivier choice of cut-off function $(\psi^\eps_M)_{\eps<1}$.}
\label{paracomposition_introduction_figure2}
 \end{figure}
The effect of the composition on the support of cutoff is seen by the following:
\begin{equation}\label{paracomposition_effect comp Hormander Metivier}
\sigma^{\psi^\eps_M}_a \circ \sigma^{\psi^\eps_M}_a=\sigma^{\psi^{2\eps+\eps^2}_M}_{a\otimes a} \text{ and } \sigma^{\psi_H^B}_a \circ \sigma^{\psi_H^B}_a=\sigma^{\psi_H^{\frac{B^2}{2B+1}}}_{a\otimes a}.
\end{equation}
Thus the composition of two paradifferential operators with cutoffs $(\psi_H^B)_{B>2}$ and $(\psi^\eps_M)_{\eps<1}$ are still paradifferential operators but with worse cutoffs $(\psi_H^{\frac{B^2}{2B+1}})_{B>2}$ and $\psi^{2\eps+\eps^2}_M$. This is not a problem when considering a finite number of composition of paradifferential operators but it becomes crucial if one for example needs to understand the limit of the series, for example $\sum \frac{(T_1)^k}{k!}$. The $k-$th order term of such a sum has the cutoffs with parameters $\sim k\eps$
and $\sim \frac{B}{2^k}$ which are no longer paradifferential operators when the conditions $\frac{B}{2^k}>2$ and $k\eps>1$ are no longer verified. 

Thus the class of para-differential operators is not stable by composition. On the other hand it is a subclass of operators that belong to the closed algebra (as shown by Bourdaud in \cite{Bourdaud88}) $\bold \Psi^m_{1,1}=S^m_{1,1}\cap {}^t\big(S^m_{1,1}\big)$, where we use the notation ${}^t A$ for the adjoint of an operator $A$ in order to avoid confusion with the pull-back operator. \\
To study the lack of stability of the class of para-differential operators in $\bold \Psi^m_{1,1}$ we look more closely to the cutoffs $\psi_H^B/\psi^\eps_M$. We first note that the estimate \eqref{paracomposition_effect comp Hormander Metivier} is optimal in the respective sub-classes of cut-offs as can be seen by studying the para-product operator $\sigma^{\psi_H^B/\psi^\eps_M}_{e^{ix\xi_0}}$, with $\xi_0\in \z$ fixed and $x\in \t$, on the torus. Here we seek a refined version of those estimates for this we introduce a special class of cutoffs $(\psi^{B_1,B_2,b})_{B_1>0,B_2>1,b>0}$, which is included in a modified version of the H\"ormader and M\'etrivier classes of cutoffs. 
\begin{definition}\label{paracomposition_introduction_new cutoff}
For $B_1>0, B_2>1$, $b>0$ and $i\in \set{1,\cdots,d}$, $\psi^{B_1,B_2,b}$ is defined by
\[
\begin{cases}\psi^{B_1,B_2,b}(\eta,\xi)=0 \text{ when }
\xi_i< B_1\eta_i+b,
\\
\psi^{B_1,B_2,b}(\eta,\xi)=1 \text{ when } \xi_i>B_1\eta_i+b+1,
\end{cases}\text{ for } \xi_i\geq,\eta_i\geq 0,
\]
\[
\begin{cases}\psi^{B_1,B_2,b}(\eta,\xi)=0 \text{ when }
\xi_i< -B_2\eta_i+b,
\\
\psi^{B_1,B_2,b}(\eta,\xi)=1 \text{ when } \xi_i>-B_2\eta_i+b+1,
\end{cases}\text{ for } \xi_i\geq 0,\eta_i\leq 0,
\]
and for $\xi_i\leq 0$, $\psi^{B_1,B_2,b}(\cdots,\xi_i,\cdots,\eta_i,\cdots)=\psi^{B_1,B_2,b}(\cdots,-\eta_i,\cdots,\xi_i,\cdots)$.
\end{definition}
In the plane $(\eta,\xi)$, when $d=1$, this is illustrated by the following figure:
\begin{figure}[h!]
\begin{tikzpicture}[xscale=2,yscale=0.6,dot/.style={circle,inner sep=1pt,fill,label={#1},name=#1},
 extended line/.style={shorten >=-#1,shorten <=-#1},
 extended line/.default=1cm]
\draw [->] (0,-1.5) -- (0,8.7);
\draw [->] (-2,3) -- (2,3);
\node[right] at (0,8.6) {\scriptsize $\xi$};
\node[below right] at (2,3.2) {\scriptsize $\eta$};
\coordinate (0) at (0,3);
\coordinate (SD1) at (-2,5);
\node[left] at (SD1) {\scriptsize $\xi = -\eta$};
\coordinate (SD2) at (2,1);
\draw (SD1) -- (SD2);
\coordinate (Q1) at (0,5);
\node [fill=black,inner sep=1pt,label=0:\tiny $b+1$] at (Q1) {};
\coordinate (Q2) at (2,8);
\coordinate (Q3) at (-1.5,8.75);
\coordinate (Q1') at (0,4);
\node [fill=black,inner sep=1pt,label=0:\tiny $b$] at (Q1') {};
\coordinate (Q2') at (2,7);
\coordinate (Q3') at (-2,9);
\draw[thick,black,pattern=north east lines,pattern color=black] (Q3) -- (Q1)--(Q2);
\node[above right] at (Q3) {\scriptsize $\xi = -B_2\eta+b+1$};
\node[above left] at (Q2) {\scriptsize $\xi = B_1\eta+b+1$};
\node[above left] at (Q3') {\scriptsize $\xi = -B_2\eta+b$};
\node[above right] at (Q2') {\scriptsize $\xi = B_1\eta+b$};
\node[fill=white] at (0,6.5) {\scriptsize $\psi = 1$};
\node[fill=white] at (1,3.8) { $\psi = 0$};
\coordinate (SQ1) at (0,1);
\node [fill=black,inner sep=1pt,label=0:\tiny $-b-1$] at (SQ1) {};
\coordinate (SQ2) at (1.5,-2.75);
\coordinate (SQ3) at (-2,-2);
\coordinate (SQ1') at (0,2);
\node [fill=black,inner sep=1pt,label=0:\tiny $-b$] at (SQ1') {};
\coordinate (SQ2') at (2,-2);
\coordinate (SQ3') at (-2,0);
\draw[thick,black,pattern=north east lines,pattern color=red] (Q3') -- (Q1')--(SQ1')--(SQ3');
\draw[thick,black,pattern=north east lines,pattern color=red] (Q2') -- (Q1')--(SQ1')--(SQ2');
\draw[thick,black,pattern=north east lines,pattern color=black] (SQ3) -- (SQ1)--(SQ2);
\node[fill=white] at (0,-0.2) {\scriptsize $\psi = 1$};
\node[below] at (SQ3) {\scriptsize $\xi = B_1\eta-b-1$};
\node[below] at (SQ2) {\scriptsize $\xi = -B_2\eta-b-1$};
\node[below] at (SQ3') {\scriptsize $\xi = B_1\eta-b$};
\node[below] at (SQ2') {\scriptsize $\xi = -B_2\eta-b$};

\end{tikzpicture}
\caption{The choice of cut-off function $(\psi^{B_1,B_2,b})_{B_1>0,B_2>1,b>0}$, $d=1$.}
\label{paracomposition_introduction_figure3}
 \end{figure}

From this new definition of asymmetrical type cut-offs and H\"ormander characterisation of $\bold \Psi^m_{1,1}$ we will deduce the following:
\begin{theorem}
Consider three real numbers $\rho\geq 0$, $B_1>0$, $B_2>1$, $b>0$, and two symbols $a \in\Gamma_\rho^\alpha$ and $b \in\Gamma_\rho^\beta$. When taking adjoints we get that there exists $a^t \in \Gamma_\rho^{\alpha}$ such that 
$$\left(T^{\psi^{B_1,B_2,b}}_a\right)^t=T^{\psi^{B_2-1,B_1+1,b}}_{a^t}.$$

For composition  there exists $a\otimes b \in \Gamma_\rho^{\alpha+\beta}$ such that for $B_1>1$
$$T^{\psi^{B_1,B_2,b}}_a\circ T^{\psi^{B_1,B_2,b}}_b=T^{\psi^{\frac{B_1^2}{2B_1-1},\frac{B_2^2}{2B_2+1}},b}_{a\otimes b}.$$
\end{theorem}

The previous theorem shows a key asymmetrical phenomena happening when composing and taking adjoints of paradifferential operators that could not be captured throuh the standard symmetrical cut-off operators defined previously which motivated the definition of the new asymmetrical type cut-offs to capture it.

\subsection{An application of the paracomposition operator}

A main application of the paracomposition operator and the paralinearisation formula \eqref{paracomposition_introduction_paralinearisation of compostion} now that we have the global estimates is the study of composition in Sobolev spaces with limited regularity. The literature on this problem is rich and our knowledge of it is certainly incomplete but we mainly looked on two recent articles treating this subject \cite{Bauer19} and \cite{Inci18} in which they study composition in Sobolev spaces and the geometry of diffeomorphisms groups on manifolds. We will limit the discussion here to the Euclidean space in which the tools presented here significantly improve upon the results from \cite{Bauer19} and \cite{Inci18}. First in \cite{Inci18} the composition estimates are proven on $H^n(\r^d)\times D^s(\r^d)$ with $n\in \n$, $s>1+\frac{d}{2}$ an integer and 
$$D^s(\r^d)=\set{\psi -id\in H^{s}(\r^d), \psi \text{ is a diffeomorphism}}.$$
Here we generalize this to $n,s$ real number and from the paralinearisation formula \eqref{paracomposition_introduction_paralinearisation of compostion} it is justified to work in the class $D^s(\r^d)$ which appears naturally but it admits several generalization the simplest one is for example using Zygmund spaces. We also clarify the need of the diffeomorphism hypothesis. More precisely we have the following,
\begin{corollary}
Consider two real numbers $s\in \r$, $\rho \in R_+^* \setminus \n$, and take $\phi \in H^s(\r^d)$ and consider $\chi\in W^{1+\rho,\infty}_{loc}(\r^d)$ a diffeomorphism such that $D\chi\in W^{\rho,\infty}(\r^d)$. Then $\phi \circ \chi \in H^{min(s,\rho)}(\r^d)$.
\end{corollary}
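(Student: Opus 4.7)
The plan is to apply the generalized paralinearization formula \eqref{paralinearization eq}, now valid in global Sobolev and Zygmund scales by the main theorem of the paper, to the pair $(\phi,\chi)$, and then estimate each of the three resulting pieces separately in $H^{\min(s,\rho)}(\r^d)$.

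Concretely, I would begin with the decomposition
\[
\phi \circ \chi \;=\; \chi^\star \phi \;+\; T_{(\nabla\phi)\circ\chi}\,\chi \;+\; R,
\]
where $R$ is the remainder furnished by the generalized paralinearization. The paracomposition term is handled directly by the continuity statement $\chi^\star\colon H^s(\r^d)\to H^s(\r^d)$ for $\chi$ in the class considered---a key output of the paper---which gives $\chi^\star\phi\in H^s\subset H^{\min(s,\rho)}$ since $\min(s,\rho)\leq s$. For the paraproduct term, I would use the fact that $T_a b$ is insensitive to the low-frequency content of $b$ to replace $\chi$ by $\chi-x$, whose derivative $D\chi-I$ lies globally in $W^{\rho,\infty}$ by hypothesis; then, combining the standard paraproduct estimate with the Zygmund regularity of order $\rho$ carried by $D\chi$ yields a bound in $H^{\rho}$. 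Finally, the remainder $R$ enjoys strictly better regularity than $\min(s,\rho)$ by the gain built into the paralinearization theorem, hence belongs to $H^{\min(s,\rho)}$ as well. Adding the three contributions gives $\phi\circ\chi\in H^{\min(s,\rho)}(\r^d)$.

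The main obstacle I anticipate is the genuinely \emph{global} treatment of the paraproduct term: $\chi$ itself belongs to no Sobolev space on $\r^d$---the identity is not in $H^\sigma$---so the whole argument hinges on cleanly separating the low-frequency linear (identity) part of $\chi$, which is annihilated by $T_{(\nabla\phi)\circ\chi}$, from the globally Zygmund-regular tail $\chi-x$ controlled by the hypothesis $D\chi\in W^{\rho,\infty}$. A secondary but more routine technicality is justifying that $(\nabla\phi)\circ\chi$ is a legitimate bounded symbol when $s$ is small; this should close by invoking the low-regularity composition estimates developed together with the paracomposition construction, so that the paraproduct term is meaningful without requiring a pointwise composition $(\nabla\phi)\circ\chi$ when $\nabla\phi$ is merely distributional.
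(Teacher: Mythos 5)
The overall strategy---invoking the paralinearization formula $\phi\circ\chi=\chi^\star\phi+T_{(\nabla\phi)\circ\chi}\chi+R$ and bounding each piece---is indeed the intended route, and the paper itself does not supply a separate proof beyond this machinery, so your outline is correct in spirit. Two of your observations are on point and important: that $\chi^\star$ being bounded $H^s\to H^s$ disposes of the first term, and that the paraproduct term must be read as $T_{(\nabla\phi)\circ\chi}(\chi-x)$ because $T_a$ annihilates the linear (frequency-zero) part of $\chi$.

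However, there are two genuine gaps. First, the paralinearization Theorem \ref{th7} is stated for $u\in W^{1,\infty}(\r^d)$, and its remainder estimates ($R_0,R_1,R_2$) require $u\in H^{1+s}$ with $s>\frac{d}{2}$; translated to the corollary's notation this means $\phi\in H^s$ with $s>1+\frac{d}{2}$, which also guarantees $\phi$ is Lipschitz by Sobolev embedding. For $s\leq 1+\frac{d}{2}$ (including the negative $s$ explicitly permitted by the statement) none of the three ingredients you cite applies as stated---indeed $\chi^\star$ itself is only defined on $H^s$ for $s>0$ in Theorem \ref{th6}. You flag this as a ``secondary but more routine technicality,'' but it is not closed by the low-regularity composition estimates you gesture at, because those are precisely the estimates carrying the $s>\frac{d}{2}$ restriction. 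In that regime a separate, more elementary argument (chain rule for integer orders plus interpolation, duality for $s<0$, all using only bi-Lipschitz bounds on $\chi$) is required; you should either supply it or restrict the range of $s$ for which your argument applies. Second, the phrase ``standard paraproduct estimate'' obscures what is actually needed for the middle term: $\chi-x$ lies in no global Sobolev space, so $\norm{T_a u}_{H^\mu}\lesssim\norm{a}_{L^\infty}\norm{u}_{H^\mu}$ is unusable. The working estimate is nonstandard: one pairs the $L^\infty$-decay $\norm{\phi_k(D)(\chi-x)}_\infty\lesssim 2^{-k(1+\rho)}$ (coming from $D\chi-I\in W^{\rho,\infty}$ via Bernstein) with $\norm{P_{\leq k-1}\big((\nabla\phi)\circ\chi\big)}_{L^2}\lesssim\norm{(\nabla\phi)\circ\chi}_{L^2}$, and then uses Proposition \ref{prop2} to conclude $T_{(\nabla\phi)\circ\chi}(\chi-x)\in H^\mu$ for every $\mu<1+\rho$. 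This step also silently requires $(\nabla\phi)\circ\chi\in L^2$, again forcing $s\geq 1$, consistent with the range where the paralinearization machinery is available but worth stating explicitly.
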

The result we have is even stronger indeed it's a Kato-Ponce like decomposition of the different terms that appear in the $H^s$ estimates of composition, for example keeping the notations of the previous Corollary and taking $\psi \in D^s(\r^d)$ we can have estimates of the form:
\[
\norm{\phi \circ \psi}_{H^s}\leq \norm{D\psi}_{L^\infty}\norm{\psi}_{H^s}+\norm{D\phi}_{L^\infty}\norm{\psi-Id}_{H^s}.
\]
So if we were only working with Sobolev spaces more sophisticated versions of the previous inequality give, 
\begin{corollary}
Consider a real number $s>1+\frac{d}{2}$, and take $\phi \in H^s(\r^d)$ and consider $\chi\in W^{1+s-\frac{d}{2},\infty}_{loc}(\r^d)$ a diffeomorphism such that $D\chi \in W^{1,\infty}(\r^d)$ and $D^2\chi\in H^{s-2}(\r^d)$. Then $\phi \circ \chi \in H^{s}(\r^d)$.
\end{corollary}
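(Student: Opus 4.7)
The plan is to use the paralinearization formula \eqref{paralinearization eq}, adapted to the operator $\chi^\star$ constructed in this paper,
\begin{equation*}
\phi \circ \chi = \chi^\star \phi + T_{\phi'(\chi)} \chi + R,
\end{equation*}
and to estimate the three terms separately in $H^s(\r^d)$. The assumption $s > 1 + d/2$ yields the Sobolev embedding $H^s \hookrightarrow C^1$, so $\phi'(\chi) \in L^\infty(\r^d)$; the extra hypotheses $D\chi \in W^{1,\infty}$ and $D^2\chi \in H^{s-2}$ are exactly the information on $\chi$ beyond the setting of the first corollary (which would only give $H^{s-d/2}$) that allows us to close at the level $H^s$.

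The first step is to bound the paracomposition term. Here I would invoke the global $H^s$ boundedness of $\chi^\star$ established earlier in the paper. The local regularity $\chi \in W^{1+s-d/2,\infty}_{loc}$, with $\rho = s - d/2 > 0$, provides enough smoothness for the microlocal dyadic pieces defining $\chi^\star$ to send $H^s$ into itself; the global constraint $D\chi \in W^{1,\infty}$ together with the diffeomorphism hypothesis then supplies the uniform control at infinity needed to handle the $L^2$ piece of the $H^s$ norm.

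The second step is to control the paraproduct term $T_{\phi'(\chi)} \chi$ together with the remainder $R$. Since the symbol $\phi'(\chi) \in L^\infty$, the standard paraproduct bound reduces the problem to estimating $\chi$ in a Sobolev-type norm of order $s$; the hypothesis $D^2\chi \in H^{s-2}$ is precisely the condition $\|D^s \chi\|_{L^2} < \infty$, so the high-frequency part of $\chi$ lies in $\dot H^s$, and together with the $W^{1,\infty}$ bound on $D\chi$ this yields $T_{\phi'(\chi)} \chi \in H^s$. The remainder $R$ is smoother than $H^s$ by the general paralinearization theory developed in the paper, so it poses no additional difficulty. The main obstacle I anticipate is matching exactly the regularity requirements of the main $H^s$-boundedness theorem for $\chi^\star$ with the assumptions of the statement: the splitting between the local condition $W^{1+s-d/2,\infty}_{loc}$ (which governs the microlocal symbols of $\chi^\star$) and the two global conditions on $D\chi$ and $D^2\chi$ (which govern behaviour at infinity) is delicately engineered for this purpose, and verifying that each of the three pieces of the paralinearization actually falls in $H^s$ will require careful book-keeping of these norms.
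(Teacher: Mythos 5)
Your strategy --- paralinearize $\phi \circ \chi$ via Theorem~\ref{th7} and estimate each piece in $H^s$ --- is clearly the one the paper intends, and your treatment of the paraproduct is essentially right: $T_{\phi'\circ\chi}\chi=\sum_{k\geq1}\Phi_{k-1}(\phi'\circ\chi)\chi_k$ involves only the dyadic blocks $\chi_k$ with $k\geq 1$, and Bernstein on rings gives $\norm{\chi_k}_{L^2}\approx 2^{-2k}\norm{(D^2\chi)_k}_{L^2}$, so $\norm{T_{\phi'\circ\chi}\chi}_{H^s}\leq C\norm{\phi'}_{L^\infty}\norm{D^2\chi}_{H^{s-2}}$.

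The gap is your assertion that the remainder ``poses no additional difficulty''. Reading Theorem~\ref{th7} with $u=\phi$ (so the theorem's $s$ is $s-1$ here), the stated Sobolev bound gives $R_0 \in H^{1+\rho+\min(1+\rho,\,s-1-d/2)}$ controlled by $\norm{D\chi}_{C^\rho_*}\norm{\phi}_{H^s}$. The best \emph{global} Zygmund index available from the corollary's hypotheses is $\rho=\max(1,\ s-1-d/2)$: $D\chi\in W^{1,\infty}$ gives $C^1_*$, and $D^2\chi\in H^{s-2}\hookrightarrow C^{s-2-d/2}_*$, combined with $D\chi\in L^\infty$, upgrades $D\chi$ to $C^{s-1-d/2}_*$. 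With this $\rho$ a short computation shows $1+\rho+\min(1+\rho,\,s-1-d/2)<s$ whenever $d\geq 3$ and $1+d/2<s<d+1$, so in that (nonempty) range Theorem~\ref{th7} taken literally does \emph{not} place $R_0$ in $H^s$. To close the argument you must reuse $D^2\chi\in H^{s-2}$ inside the remainder estimate itself: $R_0=\sum_{k\geq1}r^0_k\chi_k$ is a Meyer-multiplier sum acting on the high-frequency part $\sum_{k\geq1}\chi_k$ of $\chi$, which the hypothesis $D^2\chi\in H^{s-2}$ places in $H^s$; combined with the decay $\norm{r^0_k}_\infty\leq C2^{-\alpha k}$ for some $\alpha>0$, this lands $R_0$ in $H^{s+\alpha}\subset H^s$. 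That re-derivation, rather than a bare invocation of the theorem, is the step your sketch skips. A minor further remark: the $H^s\to H^s$ bound for $\chi^\star$ in Theorem~\ref{th6} depends only on $\norm{D\chi}_\infty$ (through $N$) and $\norm{D\chi^{-1}}_\infty$; the local Zygmund index $s-d/2$ you emphasize in that step plays no role in the global Sobolev estimate.
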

Secondly in \cite{Bauer19} to prove the well posedness of EPDIFF equation they treat the case of change of variables in pseudodifferential operator with a diffeomorphism with limited regularity. The results are restricted to skew-symmetric operators with compact support and a diffeomorphism in the class $D^s(\r^d)$. Here with the paradifferential calculus and the paracomposition in hand, the more general case of symbols with limited regularity is treated, the pseudodifferential symbols being the the case where the symbols are regular, the ellipticity and symmetry hypothesis dropped and the need of diffeomorphisms justified. More precisely we have
\begin{corollary}
Consider a real number $r$, $A \in S^r(\r^d\times \r^d)$ and $\chi\in W^{1+s-\frac{d}{2},\infty}_{loc}(\r^d)$ a diffeomorphism such that $D\chi \in W^{1,\infty}(\r^d)$ and $D^2\chi\in H^{s-2}(\r^d)$. Then the pull back $A^*$ of $A$ by $\chi$ defined as 
\[u\in \sr, A^*u=[A\big(u\circ \chi \big)]\circ \chi^{-1},\]
is extended to a linear bounded operator from $H^s(\r^d)$ to $H^{s-r}(\r^d)$.
\end{corollary}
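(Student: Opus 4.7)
The strategy is to reduce the estimate to the change of variables theorem for paradifferential operators proved earlier in the paper, and to dispose of the smoothing remainder by means of the preceding composition corollary. First I would split the classical symbol: since $A\in S^r(\r^d\times\r^d)$ is $C^\infty$ in $x$, one writes $A=T_A+K$, where $T_A$ is the paradifferential operator with symbol $A$ and $K$ is regularizing of order $-\infty$. Correspondingly,
\[
A^* u=\bigl[T_A(u\circ\chi)\bigr]\circ\chi^{-1}+\bigl[K(u\circ\chi)\bigr]\circ\chi^{-1}.
\]

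The smoothing part causes no difficulty. By the previous corollary, $u\circ\chi\in H^s(\r^d)$, hence $K(u\circ\chi)\in H^N(\r^d)$ for every $N$. The inverse $\chi^{-1}$ satisfies the same regularity assumptions as $\chi$: indeed $D\chi\in W^{1,\infty}$ is uniformly invertible, so $D\chi^{-1}=(D\chi)^{-1}\circ\chi^{-1}\in W^{1,\infty}$, and the regularity $D^2\chi^{-1}\in H^{s-2}$ is recovered from $D^2\chi\in H^{s-2}$ by another application of the composition corollary together with the chain rule. A second use of the same corollary then places $\bigl[K(u\circ\chi)\bigr]\circ\chi^{-1}$ in $H^N$ for every $N$, which is well beyond $H^{s-r}$.

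For the principal piece $\bigl[T_A(u\circ\chi)\bigr]\circ\chi^{-1}$ I would invoke the change of variables theorem for paradifferential operators established earlier in the paper: under the stated hypotheses on $\chi$, this pull-back is itself a paradifferential operator $T_B$ of order $r$ modulo a remainder of strictly lower order, with principal symbol
\[
B(x,\xi)=A\bigl(\chi(x),\,{}^t(D\chi(x))^{-1}\xi\bigr).
\]
The hypotheses $D\chi\in W^{1,\infty}$, $D^2\chi\in H^{s-2}$ and $\chi\in W^{1+s-d/2,\infty}_{loc}$, combined with the smoothness of $A$ in $x$, are designed precisely so that $B(\cdot,\xi)$ carries enough tangential regularity for the standard paradifferential Sobolev continuity theorem to give $T_B:H^s(\r^d)\to H^{s-r}(\r^d)$ continuously; the lower order remainder in the change of variables formula gains a strictly positive number of derivatives and is therefore harmless.

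The main obstacle is this third step, and within it the verification that the pulled-back symbol $B$ retains tangential regularity strictly above $d/2$ in the Zygmund scale, which is the threshold ensuring that paradifferential operators of order $r$ are bounded $H^s\to H^{s-r}$. Once this regularity bookkeeping is carried out, using the composition corollary to transfer the regularity of $\chi$ into that of $B(\cdot,\xi)$, every remaining step is routine paradifferential calculus.
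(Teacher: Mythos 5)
Your overall decomposition $\op(A)=T_A+K$ with $K$ of order $-\infty$, and your plan to handle the paradifferential piece via the change of variables theorem, is the natural route and consistent with the machinery the paper develops; but there are two substantial gaps. First, you misidentify the threshold: a paradifferential operator $T_B$ with symbol $B\in\Gamma^r_0$ (merely $L^\infty$ in $x$) is already bounded $H^\mu\to H^{\mu-r}$ for every $\mu$. The pulled-back symbol $B(x,\xi)=A\bigl(\chi(x),{}^t(D\chi(x))^{-1}\xi\bigr)$ clearly lies in $\Gamma^r_0$ as soon as $D\chi$ and $(D\chi)^{-1}$ are bounded, so the obstacle you single out (tangential Zygmund regularity above $d/2$) is not an obstacle at all; that level of spatial regularity is only needed for symbolic calculus, not for boundedness.

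Second, and more seriously, the remainder in Theorem \ref{th5} is not ``of strictly lower order'' in the sense you assume. It is the term $(R\chi)u$, whose explicit form \eqref{reste changement de variable} is $R\chi=T_{(T_a u)'\circ\chi}\chi-T_{a^*}T_{u'\circ\chi}\chi$, namely paraproducts acting on $\chi$ rather than a lower-order paradifferential operator acting on $u$. Placing this term in $H^{s-r}$ is precisely where the hypotheses $D\chi\in W^{1,\infty}$ and $D^2\chi\in H^{s-2}$ enter, through the paralinearization formula (Theorem \ref{th7}) and the paracomposition symbolic calculus (Theorems \ref{th8}, \ref{th9}). By dismissing the remainder as harmless you skip what is in fact the content of the corollary. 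Finally, your assertion that $D^2\chi^{-1}\in H^{s-2}$ follows from the chain rule plus the composition corollary is circular as stated: the chain rule expresses $D^2\chi^{-1}$ through $(D^2\chi)\circ\chi^{-1}$, and putting \emph{that} in $H^{s-2}$ via composition already requires the regularity of $\chi^{-1}$ you are in the process of establishing; a bootstrap argument is needed and should be spelled out.
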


\subsection{Heuristics behind Paradifferential calculus and Paracomposition} 
 For the sake of this discussion let us pretend that $\partial_x$ is left-invertible with a choice of $\partial_x^{-1}$ that acts continuously from $H^s$ to $H^{s+1}$. We follow here analogous ideas to the ones presented by Shnirelman in \cite{Shnirelman05}.
 
\subsubsection*{\textbf{Paraproduct}} 
One way to define the paraproduct of two functions $f,g\in H^s$ with $s$ sufficiently large is: we differentiate $fg$ $k$ times, using the Leibniz formula, and then restore the function $fg$ by the $k$-th power of $\partial_x^{-1}$:
 \begin{align*}
 fg&=\partial_x^{-k}\partial_x^{k}(fg)\\
 	&=\partial_x^{-k}\big(g\partial_x^k f+k\partial_x g\partial_x^{k-1} f+\dots+k\partial_x f\partial_x^{k-1} g+g\partial_x^k f  \big)\\
 	&=T_g f+T_fg+R,
 \end{align*}
 where,
 \[T_gf=\partial_x^{-k}\big(g\partial_x^k f\big), \ \ T_fg=\partial_x^{-k}\big(f\partial_x^k g\big),\]
 and $R$ is the sum of all remaining terms.
 
 The key observation is that if $s>\frac{1}{2}+k$, then $g \mapsto T_fg$ is a continuous operator in $H^s$ for $f  \in H^{s-k}$. The remainder $R$ is a continuous bilinear operator from $H^s$ to $H^{s+1}$. 
 
 The operator $T_fg$ is called the paraproduct of $g$ and $f$ and can be interpreted as follows. The term $T_fg$ takes into play high frequencies of $g$ compared to those of $f$ and demands more regularity in $g\in H^s$ than $f \in H^{s-k}$ thus the term $T_fg$ bears the ``singularities" brought on by $g$ in the product $fg$. Symmetrically $T_gf$ bears the ``singularities" brought on by $f$ in the product $fg$ and the remainder $R$ is a smoother function ($H^{s+1}$) and does not contribute to the main singularities of the product.
 
 Notice that this definition uses a ``general" heuristic from PDE that is the worst terms are the highest order terms (ones involving the highest order of differentiation).
 
\subsubsection*{\textbf{Paracomposition}}
We again work with $f \in H^s$ and $g \in C^s$ with $s$ large and consider the composition of two functions $f\circ g$ which bears the singularities of both $f$ and $g$, and our goal is to separate them. We proceed as before by differentiating $f \circ g$ $k$ times, using the Fa\'a di Bruno's formula, and then restore the function $fg$ by the $k$-th power of $\partial_x^{-1}$:
\begin{align*}
 f \circ g&=\partial_x^{-k} \partial_x^{k} (f \circ g)\\
 	&=\partial_x^{-k}\big((\partial_x^k f\circ g)\cdot(\partial_x g)^k 
 	+\dots+(\partial_x f\circ g)\cdot\partial_x^k g  \big)\\
 	&=g^*f+T_{\partial_x f \circ g}g+R,
 \end{align*}
 where,
 \[g^*f=\partial_x^{-k}\big((\partial_x^k f\circ g)\cdot(\partial_x g)^k \big) \text{ is the paracomposition of $f$ by $g$}\]
 and $R$ is the sum of all remaining terms.
 
 Again the key observation is that if $s>\frac{1}{2}+k$, then $f \mapsto g^*f$ is a continuous operator in $H^s$ for $g  \in C^{s-k}$. Thus this term bears essentially the singularities of $f$ in $f\circ g$. As before $T_{\partial_x f \circ g}g$ bears essentially the singularities of $g$ in $f\circ g$. The remainder $R$ is a continuous bilinear operator from $H^s$ to $H^{s+1}$. Thus we have separated the singularities of the composition $f\circ g$.
\subsubsection*{\textbf{Change of variable in Paradifferential operators}}
 From what we have seen previously it seems likely that the adequate change of variable for paradifferential operators is one that comes from commuting with the paracomposition by a diffeomorphism. We carry on the previous computation with the trivial paradifferential operator $\partial_x \sim T_{i\xi}$ and we suppose moreover that $g$ is a diffeomorphism.
\begin{align*}
 g^* \partial_x f &= \partial_x^{-k}\big((\partial_x^{k+1} f\circ g)\cdot(\partial_x g)^k \big)\\
 &= \partial_x^{-k}\big(\partial_x^{k}[\partial_x^{-k}(\partial_x^{k+1} f\circ g)\cdot(\partial_x g)^{k+1}]\cdot(\partial_x g)^{-1} \big)\\
 &=T_{(\partial_x g)^{-1}}T_{i \xi} g^*f,
 \end{align*} 
 and we notice that $(\partial_x g)^{-1}i \xi=(\partial_x)^*$ is the usual pull-back formula for pseudodifferential symbols by a diffeomorphism $g$, giving us the desired symbolic calculus rules.
\subsection{Structure of the paper}
Given the technical nature of the results in this paper we start the paper by a quick overview in sections \ref{paracomposition_section Notations and functional analysis} and \ref{paracomposition_section Notions of microlocal analysis}  of notions of functional analysis and microlocal analysis. At the end of section \ref{paracomposition_section Notions of microlocal analysis}  we discuss and show the different properties associated to the different cutoffs of paradifferential operators presented above. Then in section \ref{paracomposition_section Pull-back of pseudo and para- differential operators} we present the different results on the change of variables in pseudodifferential and paradifferential operators. And finally with all of the tools needed we redefine the paracomposition in section \ref{paracomposition_section Paracomposition} and show that it satisfies all of the desired properties. Thus the reader interested in series of paradifferential operators can go directly to section \ref{paracomposition_section Notions of microlocal analysis} , if she/he is interested only in the change of variable/pull-back can go to section \ref{paracomposition_section Pull-back of pseudo and para- differential operators} and if she/he is interested only in the paracomposition she/he can go to section \ref{paracomposition_section Paracomposition}.

\subsection{Acknowledgement}
I would like to express my sincere gratitude to my thesis advisor Thomas Alazard.

\section{Notations and functional analysis}\label{paracomposition_section Notations and functional analysis}
We present the definitions of the functional spaces that will be used.\\
We will use the usual definitions and standard notations for the regular functions $C^k$, $C^k_0$ for those with compact support, the distribution space $\dr'$,$\er'$ for those with compact support, $\dr'^k$,$\er'^k$ for distributions of order k, Lebesgue spaces ($L^p$), Sobolev spaces ($H^s,W^{p,q}$) and the Schwartz class $\sr$ and it's dual $\sr'$. All of those spaces are equipped with their standard topologies. We also use the \textit{Landau notation}  $O_{\norm{ \ }}(X)$.

\begin{notation}
We will use $\d$ to denote $\t$ or $\r$ and $\hat{\d}$ to denote their duals that is $\z$ in the case of $\t$ and $\r$ in the case of $\r$. For concision an integral on on $\z^d$ that is $\displaystyle \int_{\z^d}$ should be understood as $\displaystyle \sum_\z^d$. A function $a$ is said to be in $ C^\infty(\t^d \times \z^d)$ if for every $\xi \in \z$ $a(\cdot,\xi) \in C^\infty(\t^d)$. For $\xi\in \z^d$ and $i \in \set{1,\cdots,d}$, $\partial_{\xi_i}$ should be understood as the partial forward difference operator, that is
\[\partial_{\xi_i} a(\xi_1,\cdots,\xi_i,\cdots,\xi_d)=a(\xi_1,\cdots,\xi_i+1,\cdots,\xi_d)-a(\xi_1,\cdots,\xi_i,\cdots,\xi_d),\ \xi \in \z^d.\]
We recall the following simple identities for the Fourier transform on the Torus:
\[
\begin{cases}
\fr_{\t^d}(\partial_x^\alpha f)(\xi)=\xi^\alpha \fr_{\t^d}(f)(\xi), \xi \in \z^d,\\
\fr_{\t^d}((e^{-2i\pi x}-1)^\alpha f)(\xi)=\xi^\alpha \fr_{\t^d}(f)(\xi), \xi \in \z^d, \ x\in \t^d.
\end{cases}
\]
\end{notation}

\begin{definition}[Littlewood-Paley decomposition]\label{paracomposition_section Notations and functional analysis_def LP Theory}
Pick $P_0\in C^\infty_0(\r^d)$ so that, $$P_0(\xi)=1 \text{ for }\abs{\xi}<1 \text{ and }0\text{ for } \abs{\xi}>2 .$$ We define a dyadic decomposition of unity by:
\[ \text{for } k \geq 1, \ P_{\leq k}(\xi)=P_0(2^{-k}\xi), \ P_k(\xi)=P_{\leq k}(\xi)-P_{\leq k-1}(\xi). \]
 Thus,\[ P_{\leq k}(\xi)=\sum_{0\leq j \leq k}P_j(\xi) \text{ and } 1=\sum_{j=0}^\infty P_j(\xi). \]
 Introduce the operator acting on $\mathscr S '(\r^d)$: 
 \[P_{\leq k}u=\fr^{-1}(P_{\leq k}(\xi)u) \text{ and } u_k=\fr^{-1}(P_k(\xi)u).\]
 Thus,
 \[u=\sum_k u_k.\]
 Finally put $\set{k\geq 1, C_k=\supp \ P_k}$ the set of rings associated to this decomposition.
\end{definition}

An interesting property of the Littlewood-Paley decomposition is that even if the decomposed function is merely a distribution the terms of the decomposition are regular, indeed they all have compact spectrum and thus are entire functions. On classical functions spaces this regularisation effect can be ``measured" by the following inequalities due to Bernstein.

\begin{proposition}[Bernstein's inequalities]\label{paracomposition_Notations and functional analysis_bernstein1}
Suppose that $a\in L^p(\r^d)$ has its spectrum contained in the ball $\set{\abs{\xi}\leq \lambda}$. Then $a\in C^\infty$ and for all $\alpha \in  \n^d$ and $1\leq p \leq q \leq +\infty$, there is $C_{\alpha,p,q}$ (independent of $\lambda$) such that 
\[\norm{\partial^{\alpha}_x a}_{L^q} \leq C_{\alpha,p,q} \lambda^{\abs{\alpha}+\frac{d}{p}-\frac{d}{q}}\norm{a}_{L^p}.\]
In particular,
\[\norm{\partial^{\alpha}_x a}_{L^q} \leq C_{\alpha} \lambda^{\abs{\alpha}}\norm{a}_{L^p}, \text{ and for $p=2$, $p=\infty$}\]
\[\norm{a}_{L^\infty}\leq C \lambda^{\frac{d}{2}} \norm{a}_{L^2}.\]
If moreover a has it's spectrum is in $ \set{0<\mu \leq \abs{\xi}\leq \lambda}$ then:
\[
 C_{\alpha,q}^{-1} \mu^{\abs{\alpha}}\norm{a}_{L^q}\leq \norm{\partial^{\alpha}_x a}_{L^q} \leq C_{\alpha,q} \lambda^{\abs{\alpha}}\norm{a}_{L^q}.
\]
\end{proposition}

\begin{proposition}\label{paracomposition_Notations and functional analysis_bernstein2}
For all $\mu >0$, there is a constant $C$ such that for all $\lambda>0$ and for all $\alpha \in W^{\mu,\infty}$ with spectrum contained in $\set{\abs{\xi}\geq \lambda}$. one has the following estimate: 
\[\norm{a}_{L^\infty}\leq C \lambda^{-\mu} \norm{a}_{W^{\mu,\infty}}.\]
\end{proposition}

\begin{definition}[Singular support]
 $f \in \sr'(\r^d)$ is said to be $C^\infty$ in a neighborhood of x, if there exists a neighborhood $\omega$ of x such that for all $\psi \in C^\infty_0(\omega)$ we have $\psi f \in C^\infty(\r^d)$.\\
The singular support of a distribution f, $\singsupp f$, is defined as the complementary of such points and is clearly closed. 
\end{definition}

\begin{definition}[Zygmund spaces on $\r^d$]\label{paracomposition_Notations and functional analysis_def Zygmund spaces on r}
For $r\in \r$ we define the space: \[C^r_*(\r^d) \subset \sr'(\r^d),\text{ by }\
C^r_*(\r^d)=\set{u\in\sr'(\r^d),\norm{u}_r=\sup_q 2^{qr}\norm{u_q}_\infty<\infty}\]
 equipped with its canonical topology giving it a Banach space structure.\\
 It's a classical result that for $r\notin \n$, $C^r_*(\r^d)=W^{r,\infty}(\r^d)$ the classic H{\"o}lder spaces.\\
 We define the local spaces:
  \[C^r_{*,loc}(\r^d)=\set{u\in\sr'(\r^d),\forall \psi \in C^\infty_0(\r^d), \psi u\in C^r_*(\r^d)}.\]
\end{definition}

\begin{proposition} \label{paracomposition_Notations and functional analysis_proposition Zygmund spaces on balls}
Let $\b$ be a ball with center 0. There exists a constant C such that for all $r>0$ and for all $(u_q)_{q\in \n}\in \sr'(\r^d)$ verifying:
\[\forall q ,\supp  \hat{u_q} \subset  2^q \b  \text{ and }  (2^{qr}\norm{u_q}_\infty)_{q\in \n} \text{ is  bounded} \]
\[\text{then}, u=\sum_q u_q \in C^r_*(\r^d) \text{ and } \norm{u}_{r} \leq \frac{C}{1-2^{-r}} sup_{q \in \n}2^{qr}\norm{u_q}_\infty. \]
\end{proposition}

For the definition of spaces in open subsets of $\r^d$ we follow the presentation of \cite{Chandler17}. Let $\Omega$ be an open subset of $\r^d$.

\begin{definition}[Zygmund spaces on $\Omega$]
For $r\in \r$ we define the space: \[C^r_*(\Omega) \subset \dr'(\Omega),\text{ by }\ C^r_*(\Omega)=\set{u\in\dr'(\Omega),u=U_{|\Omega} \ for \ some  \ U \in C^r_*(\r^d)}\]
 equipped with its canonical topology that is
 \[\norm{u}_{C^r_*(\Omega)}=\inf_{\substack{U \in C^r_*(\r^d)\\ U_{|\Omega}=u}}\norm{U}_{C^r_*(\r^d)}\]
  giving it a Banach space structure.\\
   We define the local spaces:
  \[C^r_{*,loc}(\Omega)=\set{u\in\dr'(\Omega),\forall \psi \in C^\infty_0(\Omega), \psi u\in C^r_*(\Omega)} .\]
\end{definition}

\begin{definition}[Sobolev spaces on $\r^d$]\label{paracomposition_Notations and functional analysis_def Sobolev spaces on r}
It is also a classical result that for $s\in \r$ :
\[H^s(\r^d)=\set{u\in\sr'(\r^d),\abs{u}_s= \bigg(\sum_q 2^{2qs} {\norm{u_q}_{L^2}}^2 \bigg)^{\frac{1}{2}}<\infty}\]
 with the right hand side equipped with its canonical topology giving it a Hilbert space structure and $\abs{\ }_s$ is equivalent to the usual norm on $\norm{\ }_{H^s}$ .\\
 We define the local spaces:
 \[H^s_{loc}(\r^d)=\set{u\in\sr'(\r^d),\forall \psi \in C^\infty_0(\r^d), \psi u\in H^s(\r^d)}.\]
\end{definition}

\begin{proposition} \label{paracomposition_Notations and functional analysis_proposition Sobolev spaces on balls}
Let $\b$ be a ball with center 0. There exists a constant C such that for all $s>0$ and for all $(u_q)_{\in \n}\in \sr'(\r^d)$ verifying:
\[\forall q ,\supp \hat{u_q} \subset  2^q \b \text{ and } (2^{qs}\norm{u_q}_{L^2})_{q\in \n} \text{ is in} \ L^2(\n) \]
\[\text{then}, u=\sum_q u_q \in H^s(\r^d) \text{ and } \abs{u}_s \leq \frac{C}{1-2^{-s}} \bigg(\sum_q 2^{2qs} {\norm{u_q}_{L^2}}^2 \bigg)^{\frac{1}{2}}. \]
\end{proposition}

The previous definition and properties of the Littlewood-Paley decomposition, Zygmund spaces and Sobolev spaces carries out naturally to $\t^d$.

\begin{definition}[Sobolev spaces on $\Omega$]
For $s\in \r$ we define the space \[H^s(\Omega) \subset \dr'(\Omega),\text{ by }
\ H^s(\Omega)=\set{u\in\dr'(\Omega),u=U_{|\Omega} \ for \ some  \ U \in H^s(\r^d)},\]
 equipped with its canonical topology that is,
 \[\norm{u}_{H^s(\Omega)}=\inf_{\substack{U \in H^s(\r^d)\\ U_{|\Omega}=u}}\norm{U}_{H^s(\r^d)},\]
  giving it a Hilbert space structure\footnote{This is not immediate from the definition but is a consequence of the fact that $H^s(\Omega)$ can be seen as a quotient of $H^s(\r^d)$ by a closed subset, for a full presentation see \cite{Chandler17}.}.\\
   We define the local spaces:
  \[H^s_{loc}(\Omega)=\set{u\in\dr'(\Omega),\forall \psi \in C^\infty_0(\Omega), \psi u\in H^s(\Omega)}. \]
\end{definition}

This definition of the functions in an open subset might not seem as the most natural, in fact there are different ways(intrinsically, extrinsically, by interpolation etc...) to define $H^s(\Omega)$ and when no regularity assumption is put on $\Omega$ and they don't necessarily match. In \cite{Chandler17} they show that when $\Omega$ has Lipschitz regularity all the different definitions of $H^s(\Omega)$ coincide.

We recall the usual nonlinear estimates in Sobolev spaces:
\begin{itemize}
\item If $u_j\in H^{s_j}(\r^d), j=1,2$, and $s_1+s_2>0$ then $u_1u_2 \in H^{s_0}(\r^d)$ and if
\[ s_0\leq s_j, j=1,2 \text{ and } s_0\leq s_1+s_2-\frac{d}{2}, \ \ \  \]
\[\text{then }  \norm{u_1u_2}_{H^{s_0}}\leq K \norm{u_1}_{H^{s_1}}\norm{u_2}_{H^{s_2}} ,\]
where the last inequality is strict if $s_1$ or $s_2$  or $-s_0$ is equal to $\frac{d}{2}$.
\item For all $C^\infty$ function F vanishing at the origin, if $u \in H^s(\r^d)$ with $s>\frac{d}{2}$, then,
\[ \norm{F(u)}_{H^s} \leq C(\norm{u}_{H^s}),\]
for some non decreasing  function C depending only on F.
\end{itemize}
Finally we present a classic result for operator estimates by Y.Meyer \cite{Meyer81}:
\begin{lemma}[Meyer multipliers]\label{paracomposition_Notations and functional analysis_lemme Meyer multiplier} 
Let $\delta \in \r$, and suppose we have a sequence: \[ m_p\in C^{\infty}, \ \forall k\in \n, \ \sum_{\abs{\alpha}=k} \norm{\partial^\alpha m_p}_{\infty}\leq C_k 2^{p(k+\delta)}.\]
The mapping $M: u\mapsto \sum m_pu_p=Mu$ maps $H^s$ to $H^{s-\delta}$ and $C^r_*$ to $C^{r-\delta}_*$ for all $s,r>\delta$, with operators norms depending only on the $C_k$ for $k\leq \lfloor s-\delta \rfloor+1$ or $k\leq \lfloor r-\delta\rfloor+1$.
\end{lemma}

Here we recall the usual Kato-Ponce \cite{Kato88} commutator estimates:
\begin{proposition}\label{paracomposition_Notations and functional analysis_KatoPonce commutator estimate}
Consider $s>0$ and $f,g \in H^s$ then 
\[
\norm{[\D^s,f]g}_{L^2}\leq C(\norm{f}_{W^{1,\infty}}\norm{g}_{H^{s-1}}+\norm{f}_{H^s}\norm{g}_{L^\infty}).
\]
\end{proposition}

\section{Notions of microlocal analysis}\label{paracomposition_section Notions of microlocal analysis} 
In this paragraph we start by reviewing classic notations and results about pseudodifferential calculus, Fourier integral operators and paradifferential calculus, which can be found in \cite{Hormander71}, \cite{Hormander97}, \cite{Taylor07}, \cite{Alinhac07} and \cite{Metivier08} as an accessible presentation to the theories and from which we follow the presentation. Moreover we complete this by our study of the support of the composition of two paradifferential operators. 

\subsection{Pseudodifferential Calculus}
We introduce here the basic definitions and symbolic calculus results. We first introduce the classes of regular symbols.

\begin{definition}\label{paracomposition_Notions of microlocal analysis_Pseudodifferential Calculus_def symbol}
Given $m \in \r,0\leq \rho \leq1$ and $0\leq \sigma \leq1$ we denote the symbol class $S^m_{\rho,\sigma}(\d^d \times \hat{\d}^d)$ the set of all $a\in C^\infty(\d^d \times \hat{\d}^d)$ such that for all multi-orders $\alpha,\beta$ we have the estimate:
\[\abs{\partial^{\alpha}_x\partial^{\beta}_\xi a(x,\xi)}\leq C_{\alpha,\beta}(1+\abs{\xi})^{m-\rho \beta+\sigma \alpha}.\]
$S^m_{\rho,\sigma}(\d^d \times \hat{\d}^d)$ is a Fr\'echet space with the topology defined by the family of semi-norms:
\[M^m_{\alpha,\beta}(a)=\sup_{i\leq \abs{\alpha},j\leq \abs{\beta}}\sup_{\d^d \times \hat{\d}^d}\abs{\partial^{i}_x\partial^{j}_\xi a(x,\xi)(1+\abs{\xi})^{\rho j-m-\sigma i}}.\]
Set \[S^{m}(\d^d \times \hat{\d}^d)=S^m_{1,0}(\d^d \times \hat{\d}^d),\]
\[ S^{-\infty}(\d^d \times \hat{\d}^d)=\bigcap_{m\in \r}S^m(\d^d \times \hat{\d}^d) \text{ and } S^{+\infty}(\d^d \times \hat{\d}^d)=\bigcup_{m\in \r}S^m(\d^d \times \hat{\d}^d) \]
equipped with their canonically induced topology. 
\end{definition}

Given a symbol $a\in S^{m}(\d^d \times \hat{\d}^d)$, we define the pseudodifferential operator:
\[\op(a)u(x)=a(x,D)u(x)=(2\pi)^{-n}\int_{\hat{\d}^d}e^{ix.\xi}a(x,\xi)\hat{u}(\xi)d\xi. \]
For $u \in \sr(\d^d)$ we have 
\begin{equation*} 
\begin{split}
\op(a)u(x) &=(2\pi)^{-d}\int_{\hat{\d}^d}e^{ix.\xi}a(x,\xi)\hat{u}(\xi)d\xi \\
 & = (2\pi)^{-d}\int_{\hat{\d}^d}e^{ix.\xi}a(x,\xi)\int_{\d^d}e^{-iy.\xi}u(y)dy d\xi \\
 & = \int_{\hat{\d}^d}\bigg((2\pi)^{-n}\int_{\d^d}e^{i(x-y).\xi}a(x,\xi) d\xi \bigg)u(y)dy.\\
\end{split}
\end{equation*}

Thus giving us the following proposition.

\begin{proposition}
For $a \in S^m(\d^d \times \hat{\d}^d)$, $\op(a)$ has a kernel K defined by 
\begin{equation} \label{paracomposition_Notions of microlocal analysis_Pseudodifferential Calculus_Kernel equation of a pseudo operator}  
K(x,y)=(2\pi)^{-d}\int_{\hat{\d}^d}e^{i(x-y).\xi}a(x,\xi) d\xi=(2\pi)^{-n}\fr_\xi a (x,y-x).
\end{equation}

Which can be inverted to give:
\begin{align}
a(x,\xi)&=\fr_{y\rightarrow \xi}K(x,x-y)=\int_{\d^d}e^{-iy.\xi}K(x,x-y)dy \nonumber \\
&=(-1)^de^{-ix.\xi}\int_{\d^d}e^{iy.\xi}K(x,y)dy.  \label{paracomposition_Notions of microlocal analysis_Pseudodifferential Calculus_Kernel equation of a pseudo operator inverse}  
\end{align}
\end{proposition}

\begin{definition}
Let $m \in \r$, an operator T is said to be of order m if, and only if, for all $\mu \in \r$, it is bounded from $H^\mu(\d^d)$ to $H^{\mu-m}(\d^d)$. 
\end{definition}

\begin{theorem}
If $a \in S^m(\d^d \times \hat{\d}^d)$, then $a(x,D)$ is an operator of order m. Moreover we have the norm estimate:
\[\norm{a(x,D)}_{H^\mu \rightarrow H^{\mu-m}}\leq C M^m_{\mu,m+d/2+1}(a).\]
\end{theorem}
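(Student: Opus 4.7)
My plan is to first reduce to $L^2 \to L^2$ boundedness for symbols in $S^0$, and then establish that reduced statement via a dyadic decomposition in $\xi$ together with the Cotlar-Stein lemma.

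\textbf{Reduction to order zero.} Let $\Lambda^s := \op\!\big((1+\abs{\xi}^2)^{s/2}\big)$. The estimate $\op(a) : H^\mu \to H^{\mu - m}$ is equivalent to $L^2$-boundedness of $T := \Lambda^{\mu - m} \circ \op(a) \circ \Lambda^{-\mu}$. A computation on the Fourier side identifies $T = \op(c)$ where $c$ is obtained from $a$ by twisting with the two multipliers. Expanding $(1+\abs{\eta}^2)^{(\mu-m)/2}$ around $\xi$ by a Taylor-type identity and recognizing that $\eta - \xi$ is the Fourier-dual variable of $x$, one checks that $c \in S^0$ with the finitely many relevant seminorms of $c$ controlled by $M^m_{\mu, m + d/2 + 1}(a)$.

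\textbf{Main estimate.} Given $c \in S^0$, fix a dyadic partition of unity $1 = \sum_{k \geq 0} \varphi_k(\xi)$ with $\varphi_k$ supported in $\abs{\xi} \sim 2^k$ for $k \geq 1$, and set $c_k(x, \xi) := c(x, \xi) \varphi_k(\xi)$. Each $c_k$ has compact $\xi$-support, and integrating \eqref{eq1} by parts in $\xi$ against powers of $1 + 2^{2k}\abs{x-y}^2$ yields
\[
\abs{K_k(x, y)} \leq C_N \, 2^{kd} \, \big(1 + 2^k \abs{x-y}\big)^{-2N}.
\]
I would then establish the Cotlar-Stein off-diagonal estimates
\[
\norm{\op(c_j) \op(c_k)^*}_{L^2 \to L^2} + \norm{\op(c_j)^* \op(c_k)}_{L^2 \to L^2} \leq C \, 2^{-N\abs{j-k}},
\]
obtained by writing the Schwartz kernels of the compositions as iterated oscillatory integrals and integrating by parts, exploiting either the spatial decay of the $K_k$ (for the first product) or the disjoint dyadic $\xi$-supports of $c_j$ and $c_k$ (for the second). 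Cotlar-Stein then gives $\norm{\op(c)}_{L^2 \to L^2} \leq C$, and chasing the constants yields the claimed seminorm bound $M^m_{\mu, m + d/2 + 1}(a)$.

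\textbf{Main obstacle.} The most delicate step is the off-diagonal bound on $\op(c_j)^* \op(c_k)$ when $\abs{j-k}$ is large: the composite Schwartz kernel is an oscillatory integral whose phase has $x$-gradient of size $\sim 2^{\max(j,k)}$, and one must integrate by parts carefully against the $x$-derivatives of $c$, which are uniformly bounded in $\xi$ by the $S^0$ estimates. Balancing the number of $x$-derivatives invoked here against the dyadic weights coming from the $(1+\abs{\xi})$-factors in the symbol class is what fixes the order $m + d/2 + 1$ in the final seminorm bound.
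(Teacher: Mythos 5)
The paper does not actually prove this theorem; it is stated without argument as one of the classical facts recalled from \cite{Hormander71}, \cite{Taylor07}, \cite{Alinhac07} and \cite{Metivier08}, so there is no in-text proof to compare you against. Your route (conjugate by $\Lambda^{\pm s}$ to reduce to $S^0\to L^2$, then dyadic decomposition in $\xi$ plus Cotlar--Stein) is a legitimate and well-known proof, closer in spirit to the Calder\'on--Vaillancourt argument than to the most elementary one for $S^m_{1,0}$: for $\rho=1,\sigma=0$ one can avoid almost-orthogonality entirely by writing $\op(c)u=(2\pi)^{-d}\int e^{ix\cdot\eta}\,(\fr_x c)(\eta,D)u\,d\eta$, bounding $\sup_\xi\abs{(\fr_x c)(\eta,\xi)}$ by integrating by parts in $x$ to get integrable decay in $\eta$, and applying Minkowski's integral inequality; this uses roughly the $d/2+1$ $x$-derivatives appearing in the stated seminorm with no kernel decomposition. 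Your version proves more (it extends to exotic classes) at the cost of heavier machinery.

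There is one step in your sketch that would not go through as written. You attribute the almost-orthogonality decay of $\norm{\op(c_j)\op(c_k)^*}_{L^2\to L^2}$ to the spatial decay of the kernels $K_j,K_k$. Spatial decay alone gives nothing here: two Schur-bounded kernels compose to a Schur-bounded kernel with $O(1)$ operator norm, independently of $j,k$. What actually handles this term is that $\op(c_j)=\op(c_j)\,\tilde\varphi_j(D)$ and $\op(c_k)^*=\tilde\varphi_k(D)\,\op(c_k)^*$ for slightly enlarged cut-offs $\tilde\varphi_j,\tilde\varphi_k$, so $\op(c_j)\op(c_k)^*=\op(c_j)\,\tilde\varphi_j(D)\tilde\varphi_k(D)\,\op(c_k)^*$ vanishes identically once $\abs{j-k}$ exceeds a fixed constant; no estimate is needed. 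The term requiring genuine work is the other one, $\op(c_j)^*\op(c_k)$, and there your mechanism is correct: the composed kernel is an oscillatory integral in an intermediate spatial variable $z$ with phase gradient $\xi-\eta$, and on the supports of $\varphi_j(\eta)\varphi_k(\xi)$ one has $\abs{\xi-\eta}\gtrsim 2^{\max(j,k)}$, so repeated integration by parts in $z$ against the uniformly bounded $\partial_z^\alpha$-derivatives of the $S^0$ symbols yields arbitrary decay in $\abs{j-k}$. With that attribution corrected the argument closes, and the seminorm bound follows from the standard (if tedious) bookkeeping in the $\Lambda^{\pm s}$ reduction that you indicate.
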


We will now present the main results in symbolic calculus associated to pseudodifferential operators.
\begin{theorem} \label{paracomposition_Notions of microlocal analysis_Pseudodifferential Calculus_theorem symbolic calclus}  
Let $m,m' \in \r^d$, $a \in S^m(\d^d \times \hat{\d}^d)$and $b \in S^{m'}(\d^d \times \hat{\d}^d)$. 
\begin{itemize}
\item Composition: Then $\op(a)\circ \op(b)$ is a pseudodifferential operator of order $m+m'$ with symbol $a \otimes b$ defined by:
\[a \otimes  b(x,\xi)=(2\pi)^{-d}\int_{\d^d \times \hat{\d}^d}e^{i(x-y).(\xi-\eta)} a(x,\eta)b(y,\xi)dyd\eta.\]
Moreover,
\[\op(a)\circ \op(b)(x,\xi)-\op(\sum_{\abs{\alpha}<k}\frac{1}{i^{\abs{\alpha}}\alpha!}(\partial^\alpha_\xi a(x,\xi))(\partial^\alpha_x b(x,\xi))) \ \text{is of order $m+m'-k$} \]
for all $k\in \n$.
\item Adjoint: The adjoint operator of $\op(a)$, that will note $\op(a)^t$ to avoid confusion with the pullback operator defined in this work, is a pseudodifferential operator of order m with  symbol $a^t$ defined by:
\[a^t(x,\xi)=(2\pi)^{-d}\int_{\d^d \times \hat{\d}^d}e^{-iy.\eta} \bar{a}(x-y,\xi-\eta)dyd\eta\]
Moreover,
\[\op(a^t)(x,\xi)-\op(\sum_{\abs{\alpha}<k}\frac{1}{i^{\abs{\alpha}}\alpha!}(\partial^\alpha_\xi \partial^\alpha_x  \bar{a}(x,\xi))) \ \text{is of order $m-k$} \]
for all $k\in \n$.
\end{itemize}

\end{theorem}
\begin{definition}
Let $(a_j)\in S^{m_j}(\d^d \times \hat{\d}^d)$ be a series of symbols with orders $(m_j) \in \r^{\n}$ decreasing to $-\infty$. We say that $a\in S^{m_0}(\d^d \times \hat{\d}^d)$ is the asymptotic sum of $(a_j)$ if
\[\forall k \in \n, a-\sum_{j=0}^k a_j \in S^{m_{k+1}}(\d^d), \]
and in this case we write
\[
a \sim \sum_j a_j.
\]
\end{definition}

We can now write simply:
\[a \otimes  b \sim \sum_{\abs{\alpha}}\frac{1}{i^{\abs{\alpha}}\alpha!}(\partial^\alpha_\xi a(x,\xi))(\partial^\alpha_x b(x,\xi)) \]
and
\[a^t \sim \sum_{\abs{\alpha}}\frac{1}{i^{\abs{\alpha}}\alpha!}(\partial^\alpha_\xi \partial^\alpha_x  \bar{a}(x,\xi)) .\]

\begin{proposition}[Pseudo-local property] \label{paracomposition_Notions of microlocal analysis_Pseudodifferential Calculus_proposition pseudo local property}  
Let $a\in S^m(\d^d \times \hat{\d}^d)$ and let K be its kernel. Then K is $C^\infty$ for $x \neq y$. In particular, for all $u \in \sr' $:
\[\singsupp  a(x,D)  u \subset \singsupp u\]
\end{proposition}
\begin{proof} Let $x \neq y$, $\psi,\theta \in C^\infty_0(\r^d)$,$\psi=1$ near $x$, $\theta=1$ near y and $\supp \psi \cap \supp \theta= \emptyset$. Then $\tilde{K}(x,y)=\psi(x)K(x,y)\theta(y)$ is the kernel of the operator $\psi a \theta$. By Theorem \ref{paracomposition_Notions of microlocal analysis_Pseudodifferential Calculus_theorem symbolic calclus} ,  $\psi a \theta \sim 0$ thus is of order $-\infty$ which finishes the proof. 

\end{proof}
Let $\Omega$ be an open subset of $\r^d$. We will now define the notion of local symbols and operators in an open set.

\begin{definition}[Local operators and symbols]
We define $S^m(\Omega \times \r^d)$ to be the set of $a \in C^\infty(\Omega\times\r^d)$ such that for all multi-orders $\alpha,\beta$ we have the estimate:
\[\abs{\partial^{\alpha}_x\partial^{\beta}_\xi a(x,\xi)}\leq C_{\alpha,\beta}(1+\abs{\xi})^{m-\rho \beta+\sigma \alpha}.\]
$S^m(\Omega^d \times \r^d)$ is a Fr\'echet space with the topology defined by the family of semi-norms:
\[M^m_{\alpha,\beta}(a)=\sup_{i\leq \alpha,j\leq \beta}\sup_{\Omega \times\r^d}\abs{\partial^{i}_x\partial^{j}_\xi a(x,\xi)(1+\abs{\xi})^{\rho j-m-\sigma i}}.\]
We define the local spaces:
 \[S^m_{loc} (\Omega \times \r^d)=\set{a\in C^\infty(\Omega\times\r^d), \forall \psi \in C^\infty_0(\Omega), \psi a \in S^m(\Omega \times \r^d)},\]
 equipped with its canonical topology giving it a Fr\'echet space structure.
\end{definition}
If $a\in S^m (\Omega \times \r^d) \text{ or } S^m_{loc} (\Omega \times \r^d)$, the usual formula 
\[Au(x)=a(x,D)u(x)=(2\pi)^{-d}\int_{\r^d}e^{ix.\xi}a(x,\xi)\hat{u}(\xi)d\xi \]
defines an operator respectively  from $\sr'(\r^d)$,$\er '(\Omega)$ to $ \dr'(\Omega) $, which can be restricted to an operator $\er '(\Omega)\rightarrow  \dr'(\Omega)$ and $C^\infty_0(\Omega) \rightarrow C^\infty(\Omega)$. \\
The link between such operators and the operators obtained by cut-off from global operators is given by the following proposition:

\begin{proposition} \label{paracomposition_Notions of microlocal analysis_Pseudodifferential Calculus_proposition operstors defined by cut-off and operators on domains}  
Let $A:v\rightarrow C^\infty(\Omega)$ be a continuous linear operator such that for all $\psi,\theta \in C^\infty_0(\Omega)$, $\psi A\theta \in \op(S^m)$. Then there exists $a' \in S^m(\Omega \times \r^d)$ with 
A=a'(x,D)+R, where R is an operator with kernel in $C^\infty(\Omega \times \Omega)$.
\end{proposition}

\begin{proof}
Let $(\psi_j)\in C^\infty_0(\Omega)$ be a partition of unity locally finite over $\Omega$. Put $\psi_jA\psi_k=A_{jk}\in \op(S^m) $ then
\[Au=\sum_{j,k}\psi_jA\psi_k=\sum_{\substack{j,k \\ \supp  \psi_j \cap \psi_k \neq \emptyset}}A_{jk}+\sum_{\substack{j,k \\ \supp  \psi_j \cap \psi_k = \emptyset}}A_{jk}.\]
Then 
\[a'=\sum_{\substack{j,k \\ \supp  \psi_j \cap \psi_k \neq \emptyset}} A_{jk} \in S^m(\Omega \times \r^d) \]
because for $\forall \psi \in C^\infty_0(\Omega),\ \psi a'$ is a finite sum by definition of a partition of unity locally finite.\\
The remainder has a kernel:
\[\sum_{\substack{j,k, \\ \supp \psi_j \cap \psi_k = \emptyset}}\psi_j(x)K(x,y)\psi_k(y) \in C^\infty(\Omega \times \Omega) \]
by the pseudo-local property, Proposition \ref{paracomposition_Notions of microlocal analysis_Pseudodifferential Calculus_proposition pseudo local property} .
\end{proof}
We see from the previous definition that there is subtlety with the support of the functions if one want for example to define $A^t$. The following class of local operators clarifies that problem:
\begin{definition}[Properly supported operators]
A continuous linear operator $A:C^\infty_0(\Omega)\rightarrow C^\infty(\Omega)$ is said to be properly supported if, for any compact subset $K \subset \Omega$, there exists a compact subset $K' \subset \Omega$ with:
\[\supp  u \subset K \Longrightarrow \supp  Au \subset K' \text{ and } u=0 \text{ on } K' \Longrightarrow Au=0  \text{ on } K \] 
\end{definition} 
We see that such an operator maps $C^\infty_0$ to $C^\infty_0$ and for example $A^t$ can be extended in a standard way to an operator from $\dr'(\Omega)$ to itself. \\

\begin{proposition}
Let $A=a(x,D)$ where $a \in S^m_{loc}(\Omega \times \r^d)$. There exists an operator R with kernel in $C^\infty(\Omega \times \Omega)$ such that A+R is properly supported.
\end{proposition}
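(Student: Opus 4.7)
The plan is to isolate the singular part of the Schwartz kernel of $A$ along the diagonal by means of a smooth cutoff that is itself properly supported, and then to observe that the part cut away is smooth by the pseudo-local property already established.

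First I would construct a function $\chi \in C^\infty(\Omega \times \Omega)$ with the following two properties: (i) $\chi = 1$ in an open neighborhood of the diagonal $\Delta = \{(x,x):x\in\Omega\}$, and (ii) both projections $\pi_1,\pi_2 : \supp\chi \to \Omega$ are proper maps. To do this I pick a locally finite partition of unity $\{\psi_j\}_{j\in\n} \subset C^\infty_0(\Omega)$ subordinate to a locally finite open cover of $\Omega$, and for each $j$ I pick $\tilde\psi_j \in C^\infty_0(\Omega)$ with $\tilde\psi_j = 1$ on an open neighborhood of $\supp\psi_j$. I then set
\[
\chi(x,y) = \sum_{j} \psi_j(x)\tilde\psi_j(y).
\]
Local finiteness makes the sum well defined and smooth, $\chi(x,x)=\sum_j \psi_j(x)=1$ together with continuity yields property (i), and for any compact $K\subset\Omega$ only finitely many $j$ satisfy $\supp\psi_j\cap K\neq \emptyset$, so $\pi_2(\supp\chi \cap (K\times\Omega))$ is contained in a finite union of $\supp\tilde\psi_j$, hence compact; property (ii) follows symmetrically.

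With $\chi$ at hand, let $K(x,y) \in \dr'(\Omega\times\Omega)$ be the Schwartz kernel of $A=a(x,D)$, and decompose
\[
K = \chi K + (1-\chi)K.
\]
The operator $B$ with kernel $\chi K$ is properly supported, since its kernel is supported in $\supp\chi$, on which both coordinate projections to $\Omega$ are proper. On the other hand, $(1-\chi)K$ vanishes on the neighborhood of the diagonal where $\chi\equiv 1$, so it is supported in $\{x\neq y\}$; by the pseudo-local property (Proposition \ref{prop3}) applied locally through cutoffs $\psi,\theta \in C^\infty_0(\Omega)$ as in the proof of that proposition, $K$ is $C^\infty$ off the diagonal, and therefore $(1-\chi)K \in C^\infty(\Omega\times\Omega)$. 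Setting $R$ to be the operator with kernel $-(1-\chi)K$, we get a smoothing operator with kernel in $C^\infty(\Omega\times\Omega)$, and $A+R=B$ is properly supported, as required.

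The only genuinely non-routine step is the construction of the cutoff $\chi$, where one must ensure simultaneously smoothness, equality to $1$ near the diagonal, and properness of both projections on its support; once this auxiliary function is in hand, the rest of the argument is a direct application of Proposition \ref{prop3}. Note that the local hypothesis $a\in S^m_{loc}(\Omega\times\r^d)$ is used only through Proposition \ref{prop3}, which applies verbatim to $\psi A \theta$ for arbitrary $\psi,\theta\in C^\infty_0(\Omega)$ and hence gives the smoothness of $K$ off the diagonal needed above.
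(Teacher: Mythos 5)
Your proof takes essentially the same route as the paper's: both split the operator into a near-diagonal part, which is properly supported, and an off-diagonal part, whose kernel is $C^\infty$ by the pseudo-local property (Proposition \ref{prop3}). The paper implements this via a double sum $\sum_{j,k}\psi_j A\psi_k$ over a partition of unity, keeping the terms with $\supp\psi_j\cap\supp\psi_k\neq\emptyset$; you implement it with a diagonal cutoff $\chi$. However, two of your stated justifications for the properties of $\chi$ are not valid as written. The inference that ``$\chi(x,x)=1$ together with continuity yields property (i)'' is incorrect: a continuous function that equals $1$ on a closed set need not equal $1$ on any open neighborhood of that set. What actually gives property (i) is that each $\tilde\psi_j\equiv 1$ on an \emph{open} neighborhood $U_j\supset\supp\psi_j$: given $x$, local finiteness produces an open $W_0\ni x$ meeting $\supp\psi_j$ only for $j$ in a finite set with $x\in\supp\psi_j\subset U_j$, and shrinking $W_0$ to $W=W_0\cap\bigcap_j U_j$ over those finitely many $j$, every nonzero term $\psi_j(x')\tilde\psi_j(y')$ on $W\times W$ has $\tilde\psi_j(y')=1$, so $\chi\equiv 1$ there. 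Likewise, ``property (ii) follows symmetrically'' is unjustified, since your argument for properness of $\pi_1$ uses local finiteness of $\{\supp\psi_j\}$, and the family $\{\supp\tilde\psi_j\}$ is not automatically locally finite from the construction as described; you need to choose the $\tilde\psi_j$ so that their supports form a locally finite family (e.g.\ subordinate to a locally finite thickening of the original cover). With these two repairs the construction of $\chi$ is correct and the remainder of your argument goes through exactly as written, matching the paper's proof.
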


\begin{proof}
This is the same proof as Proposition \ref{paracomposition_Notions of microlocal analysis_Pseudodifferential Calculus_proposition operstors defined by cut-off and operators on domains}  because 
\[\sum_{\substack{j,k, \\ \supp \psi_j \cap \psi_k = \emptyset}}A_{jk}\]
is properly supported.
\end{proof}

\begin{remark} \label{paracomposition_Notions of microlocal analysis_Pseudodifferential Calculus_remark on properly supported operators} 
The previous proposition tells us that for local regularity considerations  we can essentially work with properly supported operators for local symbols (modulo a $C^\infty$ kernel) and by Proposition \ref{paracomposition_Notions of microlocal analysis_Pseudodifferential Calculus_proposition operstors defined by cut-off and operators on domains}  we can do the same for operators obtained by cut-off. 
\end{remark}

\subsection{Fourier Integral Operators}
Here we will give basic definitions and results as presented in part 1 of H{\"o}rmander's \cite{Hormander71}.\\
We wish to define operators of the form : 

\begin{align} \label{paracomposition_Notions of microlocal analysis_Fourier Integral Operators_eq defining op}  
A_\omega u(x)&=\int e^{iS(x,\xi)}a(x,\xi)\hat{u}(\xi)d\xi\\
&=\int e^{i(S(x,\xi)-y.\xi)}a(x,\xi)u(y)dy\nonumber \\
&=\int e^{i\omega(x,y,\xi)}a(x,\xi)u(y)dyd\xi \nonumber
\end{align}

where $u$ is a regular function, $a$ is a symbol and $\omega$ is a given function defining the operator $A$. We can clearly see that for example $\omega=0$ the integral in not defined for symbols with $m\geq -d$, we thus start by the following definition of suitable phase functions:

\begin{definition}
Let $\omega(x,y,\xi) $ be a $C^\infty(\Omega \times \Omega \times \r^d)$ map which is positively homogeneous of degree one with respect to $\xi$. Put:
\[R_{\omega}=\set{(x,y)\in \Omega \times \Omega, \forall \xi \in \r^d\setminus\set{0}, \omega(x,y,\xi) \ \text{has no critical point}}\footnote{$R_\omega$ is clearly open.},\]
and its compliment $C_{\omega}$, which is the projection on $\Omega \times \Omega$ of the conic set (with respect to $\xi$) of:
\[C=\set{(x,y,\xi)\in \Omega \times \Omega\times \r^d\setminus\set{0}, D\omega_\xi(x,y,\xi)=0}.\]
\begin{itemize}
\item Then $\omega$ is called a phase function on $R_{\omega}\times\r^d$.
\item $\omega$ is called a non-degenerate phase function if at any point in $C$, the differentials $D(\frac{\partial \omega}{\partial \xi_j}),j=1,...,d,$ are linearly independent.
\item $\omega$ is called an operator phase function on $R_{\omega}\times\r^d$ if for each fixed $x$ (or $y$) it has no critical point $(y,\xi)$ $(or (x,\xi))$ with $\xi \neq 0$.
\item For $U \subset \Omega$ define $C_\omega U=\set{x,(x,y)\in C_\omega \ for \ some \ y \in U}.$
\end{itemize}
\end{definition}
The main example here are pseudodifferential operators with $\omega(x,y,\xi)=(x-y).\xi$, in that case $C_\omega$ is equal to the diagonal $\set{(x,x),x\in \Omega}$, and we see that all of the previous definitions  naturally apply in this case. \\
The following proposition will give a definition to the weak form of \eqref{paracomposition_Notions of microlocal analysis_Fourier Integral Operators_eq defining op} :
\begin{equation} \label{paracomposition_Notions of microlocal analysis_Fourier Integral Operators_eq defining op weak form} 
<A_\omega u,v>=<op_\omega(a) u,v>=\int e^{i\omega(x,y,\xi)}a(x,y,\xi)u(y)v(x)dxdyd\xi,  \ u,v\in C^\infty_0(\Omega).
\end{equation}

\begin{proposition}
Take a symbol $a\in S^m_{\rho,\sigma}(\Omega \times \Omega \times \r^d)$,$\rho>0,\sigma<1$, and a phase function $\omega$ on $\Omega \times \Omega \times\r^d$ (that is $R_\omega=\Omega \times \Omega$). Then:
\begin{enumerate}
\item The oscillatory integral \eqref{paracomposition_Notions of microlocal analysis_Fourier Integral Operators_eq defining op weak form}  exists and is a continuous bilinear form for the $C^k_0$ topologies on $u,v$ if 
\[m-k\rho<-N, \  m-k(1-\sigma)<-N.\] 
Thus we obtain a continuous linear map $A$ from $C^k_0(\Omega)$ to $\dr'^k(\Omega)$ which has a distribution kernel $K_\omega \in \dr'^k(\Omega \times \Omega)$ given by the oscillatory integral 
\[K_\omega(u)=\int e^{i\omega(x,y,\xi)}a(x,y,\xi)u(x,y)dxdyd\xi, \ u\in C^\infty_0(\Omega\times \Omega).\]

\item If $\omega$ has no critical point $(y,\xi)$ for each fixed x, then \eqref{paracomposition_Notions of microlocal analysis_Fourier Integral Operators_eq defining op}  is defined as an oscillatory integral and we obtain a continuous map $A: C^k_0(\Omega)\rightarrow C(\Omega)$. By differentiation under the integral sign it follows that $A$ is also continuous map from  $C^k_0(\Omega)$ to $C^j(\Omega)$ if 
\[m-k\rho<-N-j, \ m-k(1-\sigma)<-N-j.\] 
\item If $\omega$ has no critical point $(x,\xi)$ for each fixed y, then the adjoint of $A$ is defined and has the properties listed in point 2, so A is a continuous map of $\er'^j(\Omega)$ into $\dr'^k(\Omega)$. In particular $A$ defines a continuous map from $\er'(\Omega) \text{ into }  \dr'(\Omega)$.
\item The oscillatory integral:
\[K_\omega(x,y)=\int e^{i\omega(x,y,\xi)}a(x,\xi)d\xi \text{ defines a } C^\infty(\Omega \times \Omega=R_\omega)  \text{ map},\]
it follows that $A$ is an integral operator with $C^\infty$ kernel, so $A$ is a continuous map of $\er'(\Omega)$ to $C^\infty(\Omega)$.
\item We have the generalization of the pseudo-local property:
\[\singsupp \  op_\omega(a)u=C_\omega \singsupp  u.\]
\end{enumerate}
When $\omega$ is an operator phase function it verifies all the previous properties.
\end{proposition}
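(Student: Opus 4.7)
The plan is to base all five statements on an integration-by-parts device built from the phase-function hypothesis, and to count the regularization each iteration produces. Since $\omega$ is positively homogeneous of degree one in $\xi$, $D_\xi\omega$ is homogeneous of degree zero, and the assumption $R_\omega=\Omega\times\Omega$ forces $|D_\xi\omega|$ to be bounded below on each compact $K\subset\Omega\times\Omega$ once $|\xi|\geq 1$. Fix a cutoff $\chi\in C^\infty_0(\r^d)$ with $\chi\equiv 1$ near $0$, and for $|\xi|\geq 1$ set
\[
L=\frac{1}{i|D_\xi\omega|^2}\sum_{j=1}^d(\partial_{\xi_j}\omega)\,\partial_{\xi_j},\qquad L\,e^{i\omega}=e^{i\omega}.
\]
The transpose $L^t$ maps $S^m_{\rho,\sigma}$ into $S^{m-\rho}_{\rho,\sigma}$ on $\set{|\xi|\geq 1}$, gaining $|\xi|^{-\rho}$ of decay at each application.

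For (1), I would split (\ref{eq4}) via $1=\chi(\xi)+(1-\chi(\xi))$. The $\chi$-piece is absolutely convergent and defines a continuous bilinear form on $C^0_0\times C^0_0$ by Fubini on the compact set $\supp u\times\supp v\times\supp\chi$. On the $(1-\chi)$-piece I iterate $L^t$; using simultaneously the assumptions $\rho>0$ and $\sigma<1$ one arranges an IBP operator whose iteration, after $k$ applications, produces an amplitude of order at most $\max(m-k\rho,\,m-k(1-\sigma))$. The two stated thresholds $m-k\rho<-N$ and $m-k(1-\sigma)<-N$ are exactly what is needed to make the resulting integrand absolutely integrable in $\xi$ against the polynomial growth of order $N$ coming from the $C^k_0$ seminorms of $u,v$. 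This realizes (\ref{eq4}) as a continuous bilinear form on $C^k_0\times C^k_0$, hence the operator $A:C^k_0(\Omega)\to\dr'^k(\Omega)$; the same argument applied in $\Omega\times\Omega$ gives the kernel $K_\omega\in\dr'^k(\Omega\times\Omega)$.

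For (2), the additional hypothesis that $\omega$ has no $(y,\xi)$-critical point for fixed $x$ permits running the same $L$-device in the $(y,\xi)$ variables with $x$ frozen, so (\ref{eq3}) converges as an oscillatory integral and depends continuously on $x$. Differentiating $j$ times in $x$ under the integral sign is legitimate at the cost of further applications of $L^t$, since $\partial_x\omega$ is homogeneous of degree one and $\partial_x a$ loses a factor $|\xi|^\sigma$; this replaces $N$ by $N+j$ in the two threshold conditions. Statement (3) is dual: the formal adjoint of $A$ is itself a Fourier integral operator with phase $-\omega(y,x,\xi)$ and amplitude $\bar a(y,x,\xi)$, so the hypothesis on $(x,\xi)$-critical points for fixed $y$ is precisely what (2) needs when applied to $A^*$, and transposing back extends $A$ to $\er'^j(\Omega)\to\dr'^k(\Omega)$.

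For (4), on $R_\omega=\Omega\times\Omega$ the operator $L$ is available for all $|\xi|\geq 1$, so inside $K_\omega(x,y)=\int e^{i\omega}a\,d\xi$ one can iterate $L^t$ as many times as desired, producing an amplitude of arbitrarily negative order; the integral is then absolutely convergent together with all its $(x,y)$-derivatives, whence $K_\omega\in C^\infty(R_\omega)$. Part (5) is a cutoff argument: given $x_0\notin C_\omega\,\singsupp u$, choose $\theta\in C^\infty_0(\Omega)$ equal to one near $\singsupp u$ and supported so that a neighbourhood $U$ of $x_0$ satisfies $U\times\supp\theta\subset R_\omega$; then $A(\theta u)$ is smooth on $U$ by (4), while $(1-\theta)u\in C^\infty$ and (2)--(3) give $A((1-\theta)u)\in C^\infty$, so $Au$ is smooth near $x_0$. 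The main obstacle is the symbolic bookkeeping in (1)--(3), balancing the gain $\rho$ from $\partial_\xi$ against the loss $\sigma$ from $\partial_{x,y}$ to produce both quantitative thresholds on $k$.
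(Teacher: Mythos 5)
Your proposal is correct in substance and follows the same route the paper indicates: the paper explicitly declines to prove this Proposition (quoting H\"ormander's Acta Math.\ 1971 paper) and instead presents Lemma \ref{lem2} as the engine, which is precisely the integration-by-parts device you build. One point worth tightening: the operator $L$ you write has only $\xi$-derivatives, hence $L^t$ gains $|\xi|^{-\rho}$ at each step, and that alone does not produce the second threshold $m-k(1-\sigma)<-N$; the threshold pair $\min(\rho,1-\sigma)$ comes from the mixed operator of Lemma \ref{lem2}, $L=\sum h_j\partial_{\xi_j}+\tilde h_j\partial_{x_j}+c$ with $\tilde h_j,c\in S^{-1}$, which is what is genuinely required in parts (2)--(3) where only joint $(y,\xi)$- (resp.\ $(x,\xi)$-) non-degeneracy is assumed and $D_\xi\omega$ may vanish. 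Under the stronger hypothesis $R_\omega=\Omega\times\Omega$ of part (1), your pure-$\xi$ operator does suffice and in fact yields the sharper single threshold $m-k\rho<-N$; your phrase "the $N$ coming from the $C^k_0$ seminorms" is also slightly off --- $N$ is the dimension of the $\xi$-integration, not a contribution of the test-function topology. These are cosmetic; the decomposition via a compactly supported $\xi$-cutoff, the iteration of $L^t$, the dual argument for (3), the arbitrary-order gain for (4), and the cutoff reduction for (5) all match the standard (H\"ormander) argument the paper is pointing to.
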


\begin{proposition}
Let $\omega(x,y,\xi) $ be a $C^\infty(\Omega \times \Omega \times \r^d)$ map which is positively homogeneous of degree one with respect to $\xi$ and a be a symbol in $ S^m_{\rho,\sigma}(\Omega \times \Omega \times \r^d)$,$\rho>\sigma$ and that either $\omega$ is linear or that $\rho+\sigma=1$. Suppose that $a$ vanishes of infinite order on $C$, that is $\partial^\alpha a=0$ for all $\alpha \in \n^{3d}$, then we have the same results as in the previous proposition with $m$ replaced by $m-\rho+\sigma$.\\
If a just vanishes on $C$ then we can find $b\in S^{m-\delta+\rho}_{\rho,\sigma}(\Omega \times \Omega \times \r^d)$ such that we have the formal equality $op_\omega(a) u=op_\omega(b) u$.
\end{proposition}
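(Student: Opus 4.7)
The plan is to reduce to the previous Proposition by an integration by parts argument in the oscillatory integral, exploiting the fact that on the complement of $C$ the phase $\omega$ has non-vanishing $\xi$-gradient, while the vanishing hypothesis on $a$ controls what happens on $C$ itself.

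Concretely, I introduce a $\xi$-conic smooth cut-off $\chi(x,y,\xi)$, homogeneous of degree $0$ for $\abs{\xi}\geq 1$, equal to $1$ on a small conic neighborhood of $C$ and vanishing outside a slightly larger one. On the support of $1-\chi$ one has $\abs{D_\xi\omega}$ bounded below by a positive constant, so the first-order differential operator
\[
L \;:=\; \chi\cdot I \;+\;\sum_j \frac{(1-\chi)\,\partial_{\xi_j}\omega}{i\,\abs{D_\xi\omega}^2}\,\partial_{\xi_j}
\]
is well defined, its coefficients belong symbolically to the correct class, and it satisfies $L e^{i\omega}=e^{i\omega}$. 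Integration by parts then yields $op_\omega(a)=op_\omega\bigl((L^t)^N a\bigr)$ for every $N$. The hypothesis that either $\omega$ is linear or $\rho+\sigma=1$ is exactly what is needed so that the iterated Leibniz computations on the coefficients of $L^t$ (which involve repeated derivatives of $\omega$ and of $\abs{D_\xi\omega}^{-2}$) stay inside $S^{\bullet}_{\rho,\sigma}$, and so that each application of $L^t$ improves the homogeneity in $\xi$ by $\rho-\sigma$; the assumption $\rho>\sigma$ then makes this a genuine gain.

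For the first assertion, the piece coming from $\chi\cdot I$ inside $L^t$ is supported in an arbitrarily small conic neighborhood of $C$, where by hypothesis $a$ and all its $(x,\xi)$-derivatives vanish faster than any power of $\abs{D_\xi\omega}$; hence this contribution has order $-\infty$ modulo a $C^\infty$ kernel. After $N$ iterations of $L^t$ one lands in $S^{m-N(\rho-\sigma)}_{\rho,\sigma}$, and the previous Proposition applies to the transformed symbol with $m$ replaced by any $m-N(\rho-\sigma)$, in particular by $m-\rho+\sigma$. For the second assertion, when $a$ only vanishes on $C$, I use a Malgrange-type division lemma, proved by a local application of the implicit function theorem together with a smooth partition of unity adapted to the components of $D_\xi\omega$, to write
\[
a \;=\; \sum_j (\partial_{\xi_j}\omega)\, a_j,\qquad a_j\in S^m_{\rho,\sigma};
\]
one single integration by parts using $(\partial_{\xi_j}\omega)e^{i\omega}=-i\,\partial_{\xi_j}e^{i\omega}$ then gives the formal identity $op_\omega(a)=op_\omega(b)$ with $b=i\sum_j\partial_{\xi_j}a_j$, which lies in $S^{m-\rho+\sigma}_{\rho,\sigma}$ after the Leibniz bookkeeping permitted by the standing hypothesis on $\omega$.

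The main obstacle I anticipate is verifying that powers of $\abs{D_\xi\omega}^{-2}$ combined with derivatives of $\omega$ and of $\chi$ stay in the appropriate $S^{\bullet}_{\rho,\sigma}$ scale after each of the $N$ integrations by parts; this is precisely where the dichotomy ``$\omega$ linear or $\rho+\sigma=1$'' becomes indispensable, since in the general case cross-derivatives of the coefficients would fall outside the symbol class. A parallel and simpler check underlies the validity of the division lemma used for the second assertion. Once these symbolic estimates are in hand, both conclusions follow from the previous Proposition applied term by term.
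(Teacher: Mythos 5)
First, a structural remark: the paper does not actually prove this Proposition. It is quoted from Part~1 of H\"ormander's \emph{Fourier Integral Operators.~I} (Proposition~1.2.5 there), and the text explicitly states ``We will not give the proof of these Propositions here but we will present the fundamental Lemma behind those results and the idea behind it,'' offering only Lemma~\ref{lem2} and the integration-by-parts heuristic. So there is no in-paper proof to compare you against; I am judging your attempt on its own merits and against H\"ormander's argument.

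Your overall skeleton---integration by parts away from $C$ with a tailored first-order operator, and a Taylor/Malgrange division near $C$---is the right one, but there are genuine gaps in each half. Away from $C$, your operator $L$ uses only $\partial_{\xi_j}$'s, and on $\supp(1-\chi)$ the coefficients $(1-\chi)\partial_{\xi_j}\omega/\abs{D_\xi\omega}^2$ are smooth, homogeneous of degree $0$, hence in $S^0_{1,0}$; iterating $L^t$ therefore costs nothing in the Leibniz bookkeeping and gains $\rho$ (not $\rho-\sigma$) per step, with \emph{no} use of the hypothesis ``$\omega$ linear or $\rho+\sigma=1$.'' This means you have not identified where that hypothesis actually acts, despite calling it ``indispensable'' for precisely these Leibniz computations. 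Near $C$, the hand-off to ``order $-\infty$'' is not justified: the fact that $a$ vanishes to infinite order on $C$ and $\chi a$ is supported in a conic neighborhood of $C$ does \emph{not} put $\chi a$ in $S^{-\infty}$---it is still order $m$, it merely vanishes to high order as one approaches $C$ transversally. Iterating your $L^t$ also keeps producing $\chi^N a$ terms of full order $m$ with no gain. The correct treatment of the $\chi a$ piece is precisely the Malgrange/Taylor division you propose for the second assertion, applied $N$ times.

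The hypothesis ``$\omega$ linear or $\rho+\sigma=1$'' (together with the non-degeneracy of $\omega$, which the paper's formulation has silently dropped from H\"ormander's statement) is what controls the \emph{division} near $C$. Writing $a=\sum_j(\partial_{\xi_j}\omega)a_j$ via Hadamard's lemma, the factors $a_j$ are essentially directional derivatives of $a$ along $\partial_{\xi_j}\omega$; these directions have $(x,y)$-components of size $O(1)$ and $\xi$-components of size $O(\abs{\xi})$, so $a_j$ inherits contributions from $\partial_x a = O(\abs{\xi}^{m+\sigma})$ and from $\abs{\xi}\,\partial_\xi a = O(\abs{\xi}^{m+1-\rho})$. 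For these to balance one needs $\sigma = 1-\rho$; when $\omega$ is linear the $\xi$-contribution is absent altogether. Either way one gets $a_j\in S^{m+\sigma}_{\rho,\sigma}$, \emph{not} $S^m_{\rho,\sigma}$ as you wrote; then $b=i\sum_j\partial_{\xi_j}a_j\in S^{m+\sigma-\rho}_{\rho,\sigma}$, which is the claimed class. Your final display has $a_j\in S^m_{\rho,\sigma}$ yet concludes $b\in S^{m-\rho+\sigma}_{\rho,\sigma}$; since $\partial_{\xi_j}$ lowers the order by exactly $\rho$, those two assertions are inconsistent, and the missing $\sigma$ is precisely what the hypothesis controls. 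To make the first assertion rigorous you should split $a=(1-\chi)a+\chi a$, handle $(1-\chi)a$ by iterating your $\xi$-only $L$ (this part is fine and needs no hypothesis beyond $\rho>0$), and handle $\chi a$ by repeated Malgrange division plus one integration by parts per factor $\partial_{\xi_j}\omega$, gaining $\rho-\sigma>0$ per step; infinite-order vanishing guarantees the division can be iterated indefinitely. That is where the hypothesis and $\rho>\sigma$ are really used.

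Finally, note that the paper's second assertion reads ``$b\in S^{m-\delta+\rho}_{\rho,\sigma}$'' where $\delta$ is undefined; this is a typo for $S^{m-\rho+\sigma}_{\rho,\sigma}$ (H\"ormander's $\delta$ is this paper's $\sigma$), and your conclusion matches the corrected exponent even though your intermediate class for $a_j$ does not.
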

As H{\"o}rmander summed up, when $\omega$ is non degenerate the singularities of the distribution $u\rightarrow op_\omega (a) u$ only depend on the Taylor expansion of $a$ on the set $C$.\\ \\

The following proposition, taken from part 2 of \cite{Hormander71},  gives the natural link between pseudodifferential operators and Fourier Integral operators defined by the phase function $\omega(x,y,\xi)=(x-y).\xi$.
\begin{proposition}\label{paracomposition_Notions of microlocal analysis_Fourier Integral Operators_proposition defining amplitude} 
Consider a real number $m$ and a symbol $c\in S^m(\Omega \times \Omega \times \r^d)$, then:
\[a(x,\xi)=\int_{\Omega \times \r^d}c(x,y,\eta)e^{i(x-y).(\eta-\xi)}dyd\eta \in S^m(\Omega \times \r^d) \]
and we have:
\[\forall u \in C^\infty_0(\Omega), op_{(x-y).\xi}(c)u=\op(a)u=(2\pi)^{-d}\int_{\r^d}e^{ix.\xi}a(x,\xi)\hat{u}(\xi)d\xi.\]
Moreover the asymptotic expansion of $a$ is given by:
\[\forall N \in \n, a(x,\xi)-\sum_{\substack{ \abs{\alpha}< N}} \frac{1}{i^{\abs{\alpha}}\alpha!} \partial^\alpha_\xi \partial^\alpha_y c(x,y,\xi)_{|y=x} \in S^{m-N}(\Omega \times \r^d).\]
\end{proposition}

In the previous setting $c$ is often called an amplitude.

We will not give the proof of these propositions here but we will present the fundamental lemma behind those results and the idea behind it. The main problem is to define oscillatory integrals of the form:
\[ \int e^{i\omega(x,\xi)}a(x,\xi)u(x)dxd\xi, \  u\in C^\infty_0(\Omega),\]  
We start by remarking that the integral is absolutely convergent if a is of order $m<-N$. 

\begin{lemma} \label{paracomposition_Notions of microlocal analysis_Fourier Integral Operators_lemme fonda integral oscillante} 
If $\omega$ has no critical point $(x,\xi)$ with $\xi \neq 0$, then one can find a first order differential operator
\[L=\sum_j h_j \frac{\partial}{\xi_j} + \tilde{h}_j \frac{\partial}{x_j} + c \]
with $h_j \in S^0(\Omega \times \r^d) $ and $\tilde{h}_j , c \in S^{-1}(\Omega \times \r^d)$ such that $L^t e^{i\omega}=e^{i\omega}$. \\ 
L is a continuous map from $S^m_{\rho,\sigma}(\Omega \times \Omega \times \r^d)$ to $S^{m-\eps}_{\rho,\sigma}(\Omega \times \Omega \times \r^d)$ where \\ $\eps=min(\rho,1-\sigma)$.
\end{lemma}
Taking a symbol $a $ of order m we compute:
\begin{align*}
\int e^{i\omega(x,\xi)}a(x,\xi)u(x)dxd\xi&=\int e^{i\omega(x,\xi)}La(x,\xi)u(x)dxd\xi \\ &=\int e^{i\omega(x,\xi)}L^ka(x,\xi)u(x)dxd\xi,
\end{align*}
under the hypothesis $\rho>0$ and $\sigma<1$  we have $\eps >0$ and $L^k a \in S^{m-k\eps}_{\rho,\sigma}(\Omega \times \Omega \times \r^d)$, taking $m-k\eps<-N$ and applying the previous remark we see that the integral is then well defined.

\subsection{Paradifferential Calculus}\label{paracomposition_Notions of microlocal analysis_Paradifferential Calculus}
We start by the definition of symbols with limited spatial regularity. Let $\w\subset \sr'$ be a Banach space.
\begin{definition}\label{paracomposition_Notions of microlocal analysis_Paradifferential Calculus_ def para symbol}
Given $\rho \geq 0$ and $m \in \r$, $\Gamma^m_\w(\d^d)$ denotes the space of locally bounded functions $a(x,\xi)$ on $\d^d\times (\hat{\d}^d \setminus 0)$, which are $C^\infty$ with respect to $\xi$ for $\xi \neq 0$ and such that, for all $\alpha \in \n^d$ and for all $\xi \neq 0$, the function $x \mapsto \partial^\alpha_\xi a(x,\xi)$ belongs to $\w$ and there exists a constant $C_\alpha$ such that for all $\eps>0$:
\begin{equation}\label{paracomposition_Notions of microlocal analysis_Paradifferential Calculus_ definition growth xi condition para} 
\forall \abs{\xi}>\eps, \norm{\partial^\alpha_\xi a(.,\xi)}_{\w}\leq C_{\alpha,\eps} (1+\abs{\xi})^{m-\abs{\alpha}}. 
\end{equation}
The spaces $\Gamma^m_\w(\d^d)$ are equipped with their natural Fr\'echet topology induced by the semi-norms defined by the best constants in \eqref{paracomposition_Notions of microlocal analysis_Paradifferential Calculus_ definition growth xi condition para} .

We will essentially work with $\w=W^{\rho,\infty}$ and write $\Gamma^m_\w=\Gamma^m_\rho$.
\end{definition}

For quantitative estimates we introduce as in \cite{Metivier08}:
\begin{definition}\label{paracomposition_Notions of microlocal analysis_Paradifferential Calculus_ definition semi-norms}
For $m\in \r$ and $a \in \Gamma^m_\w(\d)$, we set
\[M^m_\w(a;n)=\sup_{\abs{\alpha}\leq n} \ \sup_{\abs{\xi}\geq\frac{1}{2}}\norm{(1+\abs{\xi})^{m-\abs{\alpha}}\partial^\alpha_\xi a(.,\xi)}_{\w}, \text{ for } n\in \n.\]
For $\w=W^{\rho,\infty},\rho \geq 0$, we write: 
\[
\Gamma^m_{W^{\rho,\infty}}(\d)=\Gamma^m_\rho(\d) \text{ and }
M^m_\rho(a)=M^m_{W^{\rho,\infty}}(a;1+\lfloor \frac{d}{2}\rfloor).
\]
Moreover we introduce the following spaces equipped with their natural Fr\'echet space structure:
\[
C^{\infty}_b(\d)=\cap_{\rho \geq 0}W^{\rho,\infty}, \ \Gamma^m_\infty(\d)=\cap_{\rho \geq 0}\Gamma^m_\rho(\d), \ \Gamma^{-\infty}_\rho(\d)=\cap_{m\in \r}\Gamma^m_\rho(\d) \text{ and,}
\]
\[
\Gamma^{-\infty}_\infty(\d)=\cap_{\rho \geq 0}\cap_{m\in \r}\Gamma^m_\rho(\d). 
\]
\end{definition}

As presented in \cite{Hormander97,Metivier08} the idea to define paradifferential operators is to regularize the symbols by a cutoff $\psi$, for a paradifferential symbol $ a \in \Gamma^{m'}_\rho(\d^d)$ we will then associate a symbol $\sigma^\psi_a \in S^m_{1,1}(\d^d\times \hat{\d}^d )$. All of the results presented above were for the class $S^m_{1,0} \subset S^m_{1,1}$ and don't generalize to $S^m_{1,1}$, even the $L^2$ continuity. Looking more closely to $a\in S^m_{1,1}$ in \cite{Hormander97}, H\"ormander shows that the essential problems that occur are localized in the frequency regions $(\eta,0)$ and $(-\eta,\eta)$ of $\mathscr{F}_x(a)$. Thus the idea in paradifferential calculus is regularisation by a cutoff in the frequency domain with support bounded away from $(\eta,0)$ and $(-\eta,\eta)$ at infinity. Then $\sigma^\psi_a$ will have this extra spectral localization property that will give them the desired properties as in $S^m_{1,0}$.
\begin{definition-proposition}
Take $m \in \r$, $\Sigma^m_\w(\d^d)$ denotes the subclass of symbols $\sigma \in \Gamma^m_\w(\d^d)$ which satisfy the following spectral condition: $$\exists B_1,B_2,B_3,b>0 \text{ such that }B_1B_3>1 \text{ and } B_3 B_2>B_2+B_2,$$
and $\sigma$ verifies
\begin{equation}\label{paracomposition_Notions of microlocal analysis_Paradifferential Calculus_spectral condition} 
\fr_x \sigma(\eta,\xi)=0 \text{ when }
\abs{\eta}>B_1\abs{\xi}+b \text{ or }
\abs{\xi}>B_2\abs{\eta+\xi}+b.
\end{equation}
When $\w=W^{r,\infty}(\d^d)$ we write $\Sigma^m_\w(\d^d)=\Sigma^m_r(\d^d)$, we also note
\[\w \subset L^\infty(\d^d)\Rightarrow
 \Gamma^m_\w (\d^d)\subset  \Gamma^m_0(\d^d),\
  \Sigma^m_\w(\d^d) \subset \Sigma^m_0(\d^d).
\] 
Moreover, by the Bernstein inequalities \eqref{paracomposition_Notations and functional analysis_bernstein1}:
 \[\Sigma^m_0(\d^d) \subset S^m_{1,1}(\d^d).\] 
 More generally, the spectral condition implies that symbols in $\Sigma^m_\w(\d^d)$ are smooth in $x$ too.
\end{definition-proposition}

\begin{remark}\label{paracomposition_Notions of microlocal analysis_Paradifferential Calculus_remark on sigma} 
The interesting fact now is $\Sigma^m_0(\d^d)$ is shown to still enjoy all of the symbolic calculus and continuity properties announced above for $S^m_{1,0}(\d^d)$.
\end{remark}
\begin{definition-proposition}
Consider four strictly positive real numbers $b,(B_i)_{ \tiny 1\leq i\leq 3}$ verifying:
\begin{equation}\label{paracomposition_Notions of microlocal analysis_Paradifferential Calculus_definition restriction cutoff constants} 
B_1B_3>1 \text{ and } B_3 B_2>B_2+B_2.
\end{equation}
Consider $\psi$ a $C^\infty$ function such that:
\begin{enumerate}
\item 
\[
\psi(\eta,\xi)=0 \text{ when }
\abs{\eta}>B_1\abs{\xi}+b \text{ or }
\abs{\xi}>B_2\abs{\eta+\xi}+b,
\]
\[
\text{and }
\psi(\eta,\xi)=1 \text{ when } \abs{\xi}>B_3\abs{\eta}+b,
\]
\item for all $(\alpha,\beta)\in \n^d \times \n^d,$ there is $C_{\alpha_\beta}$, with $C_{0,0}\leq 1$, such that:
\begin{equation}\label{paracomposition_Notions of microlocal analysis_Paradifferential Calculus_definition cutoff growth hypothesis}
\forall(\xi,\eta): \abs{\partial_\xi^\alpha \partial_\eta^\beta \psi(\xi,\eta)}\leq C_{\alpha,\beta} (1+\abs{\xi})^{-\abs{\alpha}-\abs{\beta}}.
\end{equation}
\end{enumerate}
Such a $\psi$ is called an admissible cut-off function for any positive $b,(B_i)_{i\in \set{1,2,3}}$ verifying \eqref{paracomposition_Notions of microlocal analysis_Paradifferential Calculus_definition restriction cutoff constants} . 

The cutoffs defined in the introduction (with the extra gross hypothesis \eqref{paracomposition_Notions of microlocal analysis_Paradifferential Calculus_definition cutoff growth hypothesis}), $(\psi_H^{B})_{B>2}$ by \eqref{paracomposition_introduction_Hormander cutoff}, $(\psi_M^{\eps})$ by \eqref{paracomposition_introduction_Metivier cutoff} and $(\psi^{B_1,B_2,b})_{B>1,b>0}$ by definition \ref{paracomposition_introduction_new cutoff} are all admissible cutoff functions. 
\end{definition-proposition}

Figure \eqref{paracomposition_Notions of microlocal analysis_Paradifferential Calculus_figure5} illustrate the  condition of admissible cutoff functions in the plane $(\xi,\eta)$ when $d=1$.

\begin{definition-proposition}(Regularisation of a symbol)\label{paracomposition_Notions of microlocal analysis_Paradifferential Calculus_definition regularisation of a symbol by cutoff}
Take $m\in \r$, $a \in \Gamma^m_\w$ and $\psi$ an admissible cut-off function. Define $\sigma^\psi_a$ by
\[\fr_x\sigma^\psi_a(\xi,\eta)=\psi(\xi,\eta) \fr_x a(\xi,\eta)
\text{ then } \sigma^\psi_a \in \Sigma^m_\w(\d^d).\]

When $\w=W^{r,\infty}(\Omega)$ we have the following properties:
\begin{enumerate}
\item This association is bounded:
\[M^m_r(\sigma^\psi_a)\leq C M^m_r(a).\]
\item We have $a-\sigma^\psi_a \in \Gamma^m_0$ and $a-\sigma^{\psi^{1,b}}_a \in \Gamma^m_0$, moreover:
\[M^{m-r}_0(\sigma^\psi_a-a)\leq CM^m_r(a).\]
In particular, if $\psi_1$ and $\psi_2$ are two admissible cut-off functions then the difference $\sigma^{\psi_1}_a-\sigma^{\psi_2}_a$ belongs to $\Sigma^{m-r}_0$ and:
\[M^{m-r}_0(\sigma^{\psi_1}_a-\sigma^{\psi_2}_a) \leq CM^m_r(a).\]
\end{enumerate}
\end{definition-proposition}

\begin{figure}[h!]\label{paracomposition_Notions of microlocal analysis_Paradifferential Calculus_figure5}
\begin{tikzpicture}[xscale=1.3,yscale=0.8,dot/.style={circle,inner sep=1pt,fill,label={#1},name=#1},
 extended line/.style={shorten >=-#1,shorten <=-#1},
 extended line/.default=1cm]
\draw [->] (0,-1.5) -- (0,7.7);
\draw [->] (-4,3) -- (4,3);
\node[right] at (0,7.6) {\scriptsize $\xi$};
\node[below right] at (4,3.2) {\scriptsize $\eta$};
\coordinate (Q1) at (0,5);
\node [fill=black,inner sep=1pt,label=0:$b$] at (Q1) {};
\coordinate (Q2) at (1,7.5);
\coordinate (Q3) at (-1,7.5);
\draw[thick,black,pattern=north east lines,pattern color=black] (Q3) -- (Q1)--(Q2);
\node[above] at (Q3) {\scriptsize $\xi = -B_3\eta+b$};
\node[above ] at (Q2) {\scriptsize $\xi = B_3\eta+b$};
\node at (0.8,6.2) {\scriptsize $\psi = 1$};
\coordinate (SQ1) at (0,1);
\node [fill=black,inner sep=1pt,label=0:$-b$] at (SQ1) {};
\coordinate (SQ2) at (1,-1.5);
\coordinate (SQ3) at (-1,-1.5);
\draw[thick,black,pattern=north east lines,pattern color=black] (SQ3) -- (SQ1)--(SQ2);
\node at (0.8,0.2) {\scriptsize $\psi = 1$};
\node[below] at (SQ3) {\scriptsize $\xi = B_3\eta-b$};
\node[below] at (SQ2) {\scriptsize $\xi = -B_3\eta-b$};
\coordinate (0) at (0,3);
\coordinate (SD1) at (-4,7);
\node[left] at (SD1) {\scriptsize $\xi = -\eta$};
\coordinate (SD2) at (4,-1);
\draw (SD1) -- (SD2);
\coordinate (-B) at (-2.5,3);
\node [fill=black,inner sep=1pt,label=-45:$-b$] at (-B) {};
\coordinate (-B') at (-2.5,5.5);
\draw[dotted] (-B) -- (-B');
\coordinate (P1) at (-4,7.5);
\coordinate (P2) at (-4,6.5);
\draw[thick,black,pattern=north west lines,pattern color=red] (P1) -- (-B')--(P2);
\node[left] at (P1) {\scriptsize $\xi = \frac{B_2(\eta+\frac{b}{B_2})}{1-B_2}$};
\node[left] at (P2) {\scriptsize $\xi = \frac{B_2(\frac{b}{B_2}-\eta)}{1+B_2}$};
\node[above right] at (-B') {\scriptsize $\psi = 0$};
\coordinate (B) at (2.5,3);
\node [fill=black,inner sep=1pt,label=-45:$b$] at (B) {};
\coordinate (B') at (2.5,0.5);
\draw[dotted] (B) -- (B');
\coordinate (P3) at (4,-0.5);
\coordinate (P4) at (4,-1.5);
\draw[thick,black,pattern=north west lines,pattern color=red] (P3) -- (B')--(P4);\node[right] at (P4) {\scriptsize $\xi = \frac{B_2(\eta-\frac{b}{B_2})}{1-B_2}$};
\node[right] at (P3) {\scriptsize $\xi = \frac{-B_2(\frac{b}{B_2}+\eta)}{1+B_2}$};
\node[above right] at (B') {\scriptsize $\psi = 0$};
\coordinate (R2) at (4,3.5);
\coordinate (R3) at (4,2.5);
\draw[thick,black,pattern=north west lines,pattern color=red] (R3) -- (B)--(R2);
\node[above] at (B) {\scriptsize $\psi = 0$};
\node[right] at (R2) {\scriptsize $\xi = \frac{\eta}{B_1}-\frac{b}{B_1}$};
\node[right] at (R3) {\scriptsize $\xi = -\frac{\eta}{B_1}+\frac{b}{B_1}$};
\coordinate (SR2) at (-4,3.5);
\coordinate (SR3) at (-4,2.5);
\draw[thick,black,pattern=north west lines,pattern color=red] (SR3) -- (-B)--(SR2);
\node[above] at (-B) {\scriptsize $\psi = 0$};
\node[left] at (SR2) {\scriptsize $\xi = -\frac{\eta}{B_1}-\frac{b}{B_1}$};
\node[left] at (SR3) {\scriptsize $\xi = \frac{\eta}{B_1}+\frac{b}{B_1}$};
\end{tikzpicture}
\caption{Admissible cut-off functions.}
 \end{figure}

Now we list a couple of important calculus properties to the association $a \mapsto \sigma^{\psi}_a$. For the following proposition we fix a choice of an admissible cutoff function $\psi$ .
\begin{proposition}\label{paracomposition_Notions of microlocal analysis_Paradifferential Calculus_proposition on regularised symbols}
\begin{itemize}
\item For $m \in \r, r\geq 0, \alpha \in \n^d$ of length $\abs{\alpha} \leq r$ and $a\in \Gamma^m_r$:
\[\partial^\alpha_x \sigma^\psi_a=\sigma^\psi_{\partial^\alpha_x a} \in \Sigma^m_0. \]
\item For $m \in \r, r \geq 0$ and $\alpha \in \n^d$ of length $\abs{\alpha}\geq r$ the mapping $a \mapsto \partial^{\alpha}_x \sigma^\psi_a$ is bounded from $\Gamma^m_r$ to $\Sigma^{m+\abs{\alpha}-r}_0$, more precisely:
\[M_0^{m+\abs{\alpha}-r}( \partial^{\alpha}_x \sigma^\psi_a)\leq M^m_r(a).\]
\item For $m \in \r, r \geq 0, \beta \in \n^d$ and $a\in \Gamma^m_r$
\[\partial^{\beta}_\xi \sigma^\psi_a-\sigma^\psi_{\partial^{\beta}_\xi a} \in \Sigma_0^{m-\abs{\beta}-r}.\]
\end{itemize}
\end{proposition}
From \cite{Metivier08} we give an approximation of symbols in $\Sigma^m_0(\d^d)$ by symbols in the Schwartz class.
\begin{lemma}\label{paracomposition_Notions of microlocal analysis_Paradifferential Calculus_lemme approximation regularised symbols}
For all $\sigma \in \Sigma^m_0$ there is a sequence of symbols $\sigma_n \in \sr(\d^d\times \hat{\d}^d)$ such that
\begin{enumerate}
\item the family $\set{\sigma_n}$ is bounded in $S^m_{1,1}$,
\item the $\sigma_n$ satisfy the spectral condition \eqref{paracomposition_Notions of microlocal analysis_Paradifferential Calculus_spectral condition} for some $B_1,B_2,B_3,b>0$ independent of $n$,
\item $\sigma_n \rightarrow \sigma$ on compact subsets of $\d^d\times \hat{\d}^d$.
\end{enumerate}
\end{lemma}

A key property of operators with symbols in $\Sigma^m_0$ is captured in their actions on the spectrum of functions. First we give a general result for symbols in $S^m_{1,1}$ from \cite{Metivier08}.
\begin{proposition}\label{paracomposition_Notions of microlocal analysis_Paradifferential Calculus_propostion pseudo action spectrum}
Consider a real number $m$, $p \in S^m_{1,1}(\d^d\times \hat{\d}^d)$ and $u \in \sr(\d^d)$ then the spectrum of $\op(p)u$ is contained in the closure of the set:
\[\set{\xi+\eta,\xi \in \supp \fr u,(\eta,\xi)\in \supp \fr_x p}.\]
\end{proposition}
This implies the following property for operators verifying the spectral condition \eqref{paracomposition_Notions of microlocal analysis_Paradifferential Calculus_spectral condition}.
\begin{lemma}\label{paracomposition_Notions of microlocal analysis_Paradifferential Calculus_propostion para action spectrum}
Consider a real number $m$, $p \in \Sigma^m_0( \d^d)$ with parameter $B>1,b>0$ and $u \in \sr(\d^d)$.
\begin{itemize}
\item For $R\geq 2 b$, if $\supp \fr u \subset \set{\abs{\xi}\leq R},$ then: 
\begin{equation}\label{paracomposition_Notions of microlocal analysis_Paradifferential Calculus_propostion para action spectrum on rings}
\supp \fr \op(p)u \subset \set{\abs{\xi}\leq (1+\frac{1}{B_1})R-\frac{b}{B}},
\end{equation}
\item For $R\geq 2b$, if $\supp \fr u \subset \set{\abs{\xi}\geq R},$ then: 
\begin{equation}\label{paracomposition_Notions of microlocal analysis_Paradifferential Calculus_propostion para action spectrum on balls}
\supp \fr \op(p)u \subset \set{\abs{\xi}\geq (1-\frac{1}{B_2})R+\frac{b}{B}},
\end{equation}
\end{itemize}
\end{lemma}

The key new result on the control of spectrum of composition and adjoints of paradifferential operators is illustrated in the following.
\begin{proposition}\label{paracomposition_Notions of microlocal analysis_Paradifferential Calculus_prod para spectral localisation}
Take $m,m' \in \r$, and $\rho>0$, $a \in \Gamma^m_\rho(\d^d)$ and $b \in \Gamma^{m'}_\rho(\d^d)$. Consider an admissible cut-off function $\psi^{B,b}$ with $B_1>0,B_2>1$ and $b>$ given by definition \ref{paracomposition_introduction_new cutoff}.
Then we have:
\[
\op\big(\sigma^{\psi^{B_1,B_2,b}}_a\big)^t=\op\bigg(\sigma^{\psi^{\frac{B^2}{2B-1},b}}_{\left(\sigma^{\psi^{B_2-1,B_1+1,b}}_a\right)^t}\bigg),
\]
and for $B_1>1$
\[
\op\big(\sigma^{\psi^{B_1,B_2,b}}_a\big)\circ \op\big(\sigma^{\psi^{B_1,B_2,b}}_b\big)=\op\bigg(\sigma^{\psi^{\frac{B_1^2}{2B_1-1},\frac{B_2^2}{2B_2+1},b}}_{\sigma^{\psi^{B_1,B_2,b}}_a\otimes  \sigma^{\psi^{B_1,B_2,b}}_b}\bigg).
\]

\end{proposition}

\begin{proof}
To understand the adjoint we introduce the following linear operator $$T(\eta,\xi)=(-\eta,\eta+\xi),$$ then we have the following formal identity:
\[
\fr_x\left(\left(\sigma^{\psi^{B_1,B_2,b}}_a\right)^t\right)(\eta,\xi)=[\overline{\fr_x(\sigma^{\psi^{B_1,B_2,b}}_a)}\circ T](\eta,\xi),
\]
we then note that $\psi^{B_1,B_2,b}\circ T=\psi^{B_2-1,B_1+1,b}$ to get the desired result for adjoints.

For the composition we start from the following general identity for the composition of two symbols $\op(p)\circ \op(q)=\op(p\otimes q)$ with $p \otimes  q$ given by
\[p \otimes  q(x,\xi)=(2\pi)^{-d}\int_{\d^d\times \hat{\d}^d}e^{i(x-y).(\xi-\eta)} p(x,\eta)q(y,\xi)dyd\eta.\]
We then compute the Fourier transform in $x$ to get.
\begin{align*}
&\fr_x(p \otimes  q)(\eta,\xi)=(2\pi)^{-d}\int e^{i(x-y).(\xi-\eta_1)} e^{-ix.\eta}p(x,\eta_1)q(y,\xi)dyd\eta_1dx\\
&=(2\pi)^{-3d}\int e^{i(x-y).(\xi-\eta_1)} e^{-ix.\eta}e^{ix.\eta_2}e^{iy.\eta_3}\fr_x(p)(\eta_2,\eta_1)\fr_x(q)(\eta_3,\xi)dxdyd\eta_1d\eta_2d\eta_3\\
&=(2\pi)^{-d}\int\fr_x(p)(\eta-\tilde{\eta},\xi-\tilde{\eta})\fr_x(q)(\tilde{\eta},\xi)d\tilde{\eta},
\end{align*}
where we used $\fr_x(e^{ix.\xi})(\eta)=(2\pi)^{d}\delta_0(\eta-\xi)$.
From the previous formula we get 
\begin{multline*}
\fr_x(\sigma^{\psi^{B_1,B_2,b}}_a\otimes  \sigma^{\psi^{B_1,B_2,b}}_b)(\eta,\xi)\\=(2\pi)^{-d}\int\fr_x\left(\sigma^{\psi^{B_1,B_2,b}}_a\right)(\eta-\tilde{\eta},\xi-\tilde{\eta})\fr_x\left(\sigma^{\psi^{B_1,B_2,b}}_b\right)(\tilde{\eta},\xi)d\tilde{\eta}
\end{multline*}
From the definition is reduced to $d=1$. The strategy is to investigate what happens in the different regions of the plane $(\eta,\xi)$. 

By the definition of $\psi^{B_1,B_2,b}$ and symmetry the problem is reduced to studying the case $d=1$. The strategy is to investigate what happens in the different regions of the plane $(\eta,\xi)$. The zones $\eta\leq 0 ,\xi\geq 0$ and $\eta\geq 0 ,\xi\leq 0$ we already have the``worst" possible estimate on $B_2$ given by the standard case of the H\"ormander cut-offs. The improvement we want to study is for $\eta ,\xi\geq 0$ and $\eta ,\xi\leq 0$ and the estimate on $B_1$. By symmetry it suffices to study the case $\eta ,\xi\geq 0$.

The goal is to investigate if one can find $\eta$ and $\tilde{\eta}$ such that:
\begin{align}
B_1\eta+b&>\xi \label{paracomposition_Notions of microlocal analysis_Paradifferential Calculus_proof para action eq1}\\
 B_1 \abs{\tilde{\eta}}+b&\leq \xi \label{paracomposition_Notions of microlocal analysis_Paradifferential Calculus_proof para action eq2}\\ 
 B\abs{\eta-\tilde{\eta}}+b&\leq \abs{\xi-\tilde{\eta}}\label{paracomposition_Notions of microlocal analysis_Paradifferential Calculus_proof para action eq3}
\end{align}
and in that case find an upper bound on $\eta$.

We have:
\[
 B_1 \abs{\tilde{\eta}}+b\leq \xi < B_1\eta+b \Rightarrow \begin{cases}
 \abs{\tilde{\eta}}<\xi\\
 \abs{\tilde{\eta}}<\eta
 \end{cases}
  \Rightarrow 
  \begin{cases}
\abs{\xi-\tilde{\eta}}=\xi-\tilde{\eta}\\
\abs{\eta-\tilde{\eta}}=\eta-\tilde{\eta}
 \end{cases}
 .
\]
Thus by \eqref{paracomposition_Notions of microlocal analysis_Paradifferential Calculus_proof para action eq3}:
\[
B_1\eta-B\tilde{\eta}+b\leq \xi-\tilde{\eta}\Rightarrow B_1\eta-\xi+b\leq (B_1-1)\tilde{\eta},
\]
for $B_1>1$, we have:
\[
\frac{B_1}{B_1-1}\eta-\frac{\xi}{B_1-1}+\frac{b}{B_1-1}\leq \tilde{\eta},
\]
thus by \eqref{paracomposition_Notions of microlocal analysis_Paradifferential Calculus_proof para action eq2}:
\[
\frac{B_1^2}{B_1-1}\eta-\frac{B_1}{B_1-1}\xi+b\frac{B_1}{B_1-1}\leq \xi-b\Rightarrow \frac{B_1^2}{B_1-1}\eta+b\frac{2B_1-1}{B_1-1}\leq \frac{2B_1-1}{B_1-1}\xi,
\]
which give the desired upper bound:
\[
\frac{B_1^2}{2B_1-1}\eta+b\leq \xi.
\]
\end{proof}

\begin{definition}\label{paracomposition_Notions of microlocal analysis_Paradifferential Calculus_def para op}
Consider a real numbers $m\in \r$, a symbol $a\in \Gamma^m_\w$ and an admissible cutoff function $\psi$ define the paradifferential operator $T_a$ by:
\[\widehat{T_a u}(\xi)=(2\pi)^{-d}\int_{\hat{\d}^d}\psi(\xi-\eta,\eta)\hat{a}(\xi-\eta,\eta)\hat{u}(\eta)d\eta,\]
where $\hat{a}(\eta,\xi)=\int e^{-ix.\eta}a(x,\xi)dx$ is the Fourier transform of $a$ with respect to the first variable.
The connection between two different choices of cut-offs is the following:
\begin{equation}\label{paracomposition_Notions of microlocal analysis_Paradifferential Calculus_difference between 2 choices of cutoff}
\forall a \in \Gamma^m_\rho, \ \sigma^{\psi}_a-\sigma^{\psi'}_a\in \Gamma^{m-\rho}_0.
\end{equation}
 \end{definition}
The first main features of paradifferential operators is their continuity given by the following theorems.
\begin{theorem}\label{paracomposition_Notions of microlocal analysis_Paradifferential Calculus_para continuity}
Take $m \in \r$. If $a\in \Gamma^m_0(\d^d)$, then $T_a$ is of order m. Moreover, for all $\mu \in \r$ there exists a constant K such that:
\[\norm{T_a}_{H^\mu \rightarrow H^{\mu-m}}\leq K M^m_0(a),\text{ and,}\]
\[\norm{T_a}_{W^{\mu,\infty} \rightarrow W^{\mu-m,\infty}}\leq K M^m_0(a), \mu \notin \n.\]
\end{theorem}

The symbolic calculus for paradifferential operators is their continuity given by the following theorem from \cite{Metivier08}.
\begin{theorem} \label{paracomposition_Notions of microlocal analysis_Paradifferential Calculus_symbolic calculus para}
Take $m,m' \in \r$, and $\rho>0$, $a \in \Gamma^m_\rho(\d^d)$and $b \in \Gamma^{m'}_\rho(\d^d)$. 
\begin{itemize}
\item Composition: Then $T_a T_b$ is a paradifferential operator of order $m+m'$ and $T_a T_b- T_{a\#b}$ is of order $m+m'-\rho$ where $a \#b $ is defined by:
\[a \#b=\sum_{\abs{\alpha}<\rho }\frac{1}{i^{\abs{\alpha}}\alpha!} \partial^\alpha_\xi a \partial^\alpha_x b \]
Moreover, for all $\mu \in \r$ there exists a constant K such that
\[ \norm{T_aT_b-T_{a\#b}}_{H^\mu \rightarrow H^{\mu-m-m'+\rho}} \leq K (M^m_\rho (a) M^{m'}_0(b)+M^m_\rho (a) M^{m'}_0(b)). \]

\item Adjoint: The adjoint operator of $T_a$, that we will note $T_a^t$ to again avoid confusion with the pull back operator defined in this work, is a paradifferential operator of order m with  symbol $a^t$ defined by:
\begin{equation}\label{paracomposition_Notions of microlocal analysis_Paradifferential Calculus_definition adjoint para}
a^t=\sum_{\abs{\alpha}<\rho} \frac{1}{i^{\abs{\alpha}}\alpha!}\partial^\alpha_\xi \partial^\alpha_x \bar{a} 
\end{equation}
Moreover, for all $\mu \in \r$ there exists a constant K such that:
\[ \norm{T_a^t-T_{a^t}}_{H^\mu \rightarrow H^{\mu-m+\rho}} \leq K M^m_\rho (a). \]
\end{itemize}
\end{theorem}

Combining Theorem \ref{paracomposition_Notions of microlocal analysis_Paradifferential Calculus_symbolic calculus para} with Proposition \ref{paracomposition_Notions of microlocal analysis_Paradifferential Calculus_prod para spectral localisation} we get the following more precise theorem on composition of paradifferential operators. 

\begin{theorem} \label{paracomposition_Notions of microlocal analysis_Paradifferential Calculus_symbolic calculus para precised}
Take $m,m' \in \r$, and $\rho>0$, $a \in \Gamma^m_\rho(\d^d)$and $b \in \Gamma^{m'}_\rho(\d^d)$. Then for $B_1,B_2>1, b>0$ there exists $r\in \Gamma^{m+m'-\rho}_0(\d^d)$ such that:
\[ M^{m+m'-\rho}_0(r) \leq K (M^m_\rho (a) M^{m'}_0(b)+M^m_\rho (a) M^{m'}_0(b)), \]
and we have for
 \[T^{\psi^{B_1,B_2,b}}_a T^{\psi^{B_1,B_2,b}}_b- T^{\psi^{\frac{B_1^2}{2B_1-1},\frac{B_2^2}{2B_2+1},b}}_{a\#b}=T^{\psi^{\frac{B_1^2}{2B_1-1},\frac{B_2^2}{2B_2+1},b}}_r. \]

\end{theorem}

If $a=a(x)$ is a function of $x$ only, the paradifferential operator $T_a$ is called a para-product. With a good choice of $(B,b)$ in the definition of the cut-off function with respect to our choice of the dyadic decomposition of unity in the Littlewood-Paley decomposition we get that when $a=a(x)$, $T_a$ takes the usual form:
\[ T_au=\sum_{k=1}^\infty \Phi_{k-1}a  u_k .\]
It follows from Theorem \ref{paracomposition_Notions of microlocal analysis_Paradifferential Calculus_symbolic calculus para}and the Sobolev embeddings that:
\begin{itemize}
\item If $a \in H^\alpha(\d^d)$ and $b \in H^\beta(\d^d)$ with $\alpha,\beta>\frac{d}{2}$, then
\[T_aT_b-T_{ab}  \text{ is of order } -\bigg( min\set{\alpha,\beta}-\frac{d}{2} \bigg).\]
\item If $a \in H^\alpha(\d^d)$ with $\alpha>\frac{d}{2}$, then
\[T_a^t-T_{a^t} \text{ is of order } -\bigg(\alpha-\frac{d}{2} \bigg).\]
\end{itemize}
An important feature of para-products is that they are well defined for function $a=a(x)$ which are not $L^\infty$ but merely in some Sobolev spaces $H^r$ with $r<\frac{d}{2}$.
\begin{proposition}
Take $m>0$. If $a\in H^{\frac{d}{2}-m}(\d^d)$ and $u \in H^\mu(\d^d)$ then, \[ T_au \in  H^{\mu-m}(\d^d), \text{ and } \ 
 \norm{T_a u}_{H^{\mu -m}}\leq K \norm{a}_{H^{\frac{d}{2} -m}}\norm{u}_{H^{\mu}}. \]
\end{proposition}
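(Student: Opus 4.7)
The plan is to exploit the frequency localization of the paraproduct together with Bernstein's inequalities. With the admissible cut-off $\psi$ chosen to be compatible with the Littlewood--Paley decomposition, the paraproduct takes the classical dyadic form
\[T_a u = \sum_{k\geq 1} P_{\leq k-1}(a)\, u_k,\]
and each summand has frequency support in a dyadic shell of size $\sim 2^k$, since $P_{\leq k-1}(a)$ is localized in a ball of radius $\lesssim 2^{k-1}$ while $u_k$ is localized in a ring of radius $\sim 2^k$.

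\textbf{Pointwise control of the low-frequency factor.} The core step is to establish
\[\norm{P_{\leq k-1}(a)}_{L^\infty}\leq C\, 2^{km}\,\norm{a}_{H^{d/2-m}}.\]
Decomposing $P_{\leq k-1}(a) = \sum_{j<k-1} a_j$ and applying Bernstein (Proposition \ref{bernstein1}) with $p=2,\ q=\infty$ gives $\norm{a_j}_{L^\infty}\leq C\, 2^{jd/2}\norm{a_j}_{L^2}$. Setting $c_j = 2^{j(d/2-m)}\norm{a_j}_{L^2}$, whose $\ell^2$-norm is controlled by $\norm{a}_{H^{d/2-m}}$, this rewrites as $\norm{a_j}_{L^\infty} \leq C\, 2^{jm}\,c_j$. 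Because $m > 0$, the geometric sum $\sum_{j<k-1} 2^{2jm}$ is controlled by $C\, 2^{2km}$, and Cauchy--Schwarz yields
\[\sum_{j<k-1} 2^{jm}c_j \leq \Bigl(\sum_{j<k-1} 2^{2jm}\Bigr)^{1/2}\,\norm{(c_j)}_{\ell^2} \leq C\, 2^{km}\,\norm{a}_{H^{d/2-m}},\]
which is the desired bound.

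\textbf{Assembly.} H{\"o}lder's inequality now gives
\[\norm{P_{\leq k-1}(a)\,u_k}_{L^2}\leq C\, 2^{km}\,\norm{a}_{H^{d/2-m}}\,\norm{u_k}_{L^2}.\]
Multiplying by $2^{k(\mu-m)}$, squaring, and summing in $k$ produces
\[\sum_{k\geq 1} 2^{2k(\mu-m)}\norm{P_{\leq k-1}(a)\,u_k}_{L^2}^2 \leq C\,\norm{a}_{H^{d/2-m}}^2\sum_k 2^{2k\mu}\norm{u_k}_{L^2}^2\leq C\,\norm{a}_{H^{d/2-m}}^2\norm{u}_{H^\mu}^2.\]
Since each block $P_{\leq k-1}(a)\,u_k$ is spectrally localized in a shell of size $\sim 2^k$, the annular variant of Proposition \ref{prop2} (valid for every real exponent, not only positive ones, because dyadic shells overlap only with finitely many neighbours) delivers $T_a u \in H^{\mu-m}$ with the required estimate.

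\textbf{Main obstacle.} The delicate point is the uniform control of $P_{\leq k-1}(a)$ when $a$ is merely in $H^{d/2-m}$ and not in $L^\infty$: the $L^\infty$-norm of $P_{\leq k-1}(a)$ genuinely blows up with $k$. The content of the estimate above is that this blow-up is precisely of order $2^{km}$, exactly absorbed by the $m$-loss in the target regularity; the hypothesis $m>0$ is used crucially to converge the geometric series $\sum_{j<k-1} 2^{2jm}$ through Cauchy--Schwarz. The secondary subtlety is that Proposition \ref{prop2} as stated requires $s>0$, whereas $\mu-m$ may be arbitrary here, but the annular version is standard and holds for all real exponents, so no difficulty arises.
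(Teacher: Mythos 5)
Your argument is correct and is the standard proof of this boundedness: control the low-frequency factor $P_{\leq k-1}a$ in $L^\infty$ via Bernstein, observe that the growth is exactly $2^{km}$ (using $m>0$ for convergence of the geometric series), absorb it into the loss of regularity, and reconstitute via the Littlewood--Paley characterization. The paper in fact states this proposition without proof (it is a classical paraproduct estimate, appearing for instance in M\'etivier's notes cited as \cite{Metivier08}), so there is no paper proof to compare against; the route you take is the expected one.

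One technical point worth making explicit: with a cut-off gap of only one unit, i.e. writing the blocks as $\Phi_{k-1}a\cdot u_k$, the Fourier support of the product lies in a \emph{ball} of radius $\sim 3\cdot 2^k$ but not in a genuine annulus, since the naive lower bound $2^{k-1}-2^{k}$ is negative. The annular reconstruction lemma you invoke, which is what saves the case $\mu - m \leq 0$, requires the larger spectral gap that the admissible cut-off with $\eps_2<1$ actually provides (equivalently, a paraproduct written as $\sum_k \Phi_{k-N}a\cdot u_k$ with $N\geq 3$). Your phrase ``chosen to be compatible with the Littlewood--Paley decomposition'' is meant to cover this, but as written the claim that each block sits in a dyadic shell does not follow from the stated gap of one; making the gap explicit would close this small loophole. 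For $\mu-m>0$ the ball version (Proposition \ref{prop2}) already suffices and there is no issue.
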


A main feature of para-products is the existence of para-linearisation theorems which allow us to replace nonlinear expressions by paradifferential expressions, at the price of error terms which are smoother than the main terms.

\begin{theorem} \label{paracomposition_Notions of microlocal analysis_Paradifferential Calculus_paralinearisation para product} 
Let $\alpha, \beta \in \r $ be such that $\alpha,\beta> \frac{d}{2}$, then
\begin{itemize}
\item Bony's Linearisation Theorem: for all $C^\infty$ function F, if $a \in H^\alpha (\d^d)$ then
\[ F(a)- F(0)-T_{F'(a)}a \in H^{2\alpha-\frac{d}{2}} (\d^d). \]
\item If $a\in H^\alpha(\d^d)$ and $b\in H^\beta(\d^d)$, then $ab-T_ab-T_ba \in H^{\alpha+ \beta-\frac{d}{2}} (\d^d)$. Moreover there exists a positive constant K independent of a and b such that:
\[\norm{ab-T_ab-T_ba}_{H^{\alpha+ \beta-\frac{d}{2}} }\leq K  \norm{a}_{H^\alpha} \norm{b}_{H^\beta}  .\]
\end{itemize}
\end{theorem}
\subsubsection{Link between Fourier Integral Operators and paradifferential operators} 
In order to give the link between Paradifferential operators and Fourier Integral Operators we start by defining the space of amplitudes for Paradifferential operators.
\begin{definition-proposition}
Take $m \in \r$, $A^m_\w(\r^d)$ denotes the subclass of symbols $c \in \Gamma^m(\w \times \w \times \r^d)$ which satisfy the following spectral condition, $$\exists B_1,B_2,B_3,b>0 \text{ such that }B_1B_3>1 \text{ and } B_3 B_2>B_2+B_2,$$
and $c$ verifies
\begin{equation}\label{paracomposition_Notions of microlocal analysis_Paradifferential Calculus_spectral condition para amplitude}
 \fr_{x,y} c(\xi,\zeta,\eta)=0 \text{ for } B_2\abs{\xi-\zeta}+b>\abs{\eta} \text{ or } B_1\abs{\zeta}+b>\abs{\eta}.
\end{equation}
When $\w=W^{r,\infty}(\Omega)$ we write $A^m_\w(\r^d)=A^m_r(\r^d)$.\\
By the Bernstein inequalities \eqref{paracomposition_Notations and functional analysis_bernstein1}, $A^m_0(\r^d) \subset S^m_{1,1}(\r^d)$. More generally, the spectral condition implies that symbols in $A^m_\w(\r^d)$ are smooth in $x,y$ too.
\end{definition-proposition} 
\begin{proposition}\label{paracomposition_Notions of microlocal analysis_Link between Fourier Integral Operators and paradifferential operators_para amplitude}
Consider two real numbers $m \in \r$, $r \in \r_{+}$ and an amplitude $c\in A^m_r( \r^d)$, then:
\[\sigma(x,\xi)=\int_{\Omega \times \r^d}c(x,y,\eta)e^{i(x-y).(\eta-\xi)}dyd\eta \in \Sigma^m_r( \r^d) \]
and we have:
\[\forall u \in C^\infty_0(\Omega), op_{(x-y).\xi}(c)u=\op(\sigma)u=(2\pi)^{-d}\int_{\r^d}e^{ix.\xi}\sigma(x,\xi)\hat{u}(\xi)d\xi.\]
Moreover the asymptotic expansion of a is given by:
\[\sigma(x,\xi)-\sum_{\substack{ \abs{\alpha}< N}}\frac{1}{i^{\abs{\alpha}}\alpha!} \partial^\alpha_\xi \partial^\alpha_y c(x,y,\xi)_{|y=x} \in \Sigma^{m-N}_{r-N}( \r^d).\]
\end{proposition}
\begin{proof}
First by Lemma \ref{paracomposition_Notions of microlocal analysis_Paradifferential Calculus_lemme approximation regularised symbols} we can work with an amplitude $c$ in $\sr$. As $\sr \subset S^m_{1,0} $
  by Proposition \ref{paracomposition_Notions of microlocal analysis_Fourier Integral Operators_proposition defining amplitude}  we have
  \[\sigma(x,\xi)=\int_{\r^d \times \r^d}c(x,y,\eta)e^{i(x-y).(\eta-\xi)}dyd\eta \in \sr.\]
Moreover writing 
\[\fr_x \sigma(\eta,\xi)=\int_{\r^d}\fr_{x,y}c(\xi+\eta-\tilde{\eta},\tilde{\eta}-\xi,\tilde{\eta})d\tilde{\eta},\]
we see that if $c$ verifies the spectral condition with parameters $B,b$ then so does $\sigma$ with parameter $B-1,b$ thus $\sigma \in \Sigma^m_r( \r^d)$. The asymptotic expansion comes from the one given in Proposition \ref{paracomposition_Notions of microlocal analysis_Fourier Integral Operators_proposition defining amplitude}  combined by the symbolic calculus rules in Proposition \ref{paracomposition_Notions of microlocal analysis_Paradifferential Calculus_proposition on regularised symbols}.
 \end{proof}
\section{Pull-back of pseudo and para- differential operators} \label{paracomposition_section Pull-back of pseudo and para- differential operators}
Let $\Omega, \Omega'$ be two open subsets of $\r^d$. Henceforth we will note all variables in $\Omega'$ with a $' $ for clarity in the computations.
Let $\chi:\Omega \rightarrow \Omega'$ be a $C^\infty$ map, $\chi$ gives rise naturally to the pull back operation for functions and kernels:
\begin{align*}
  &C^\infty(\Omega') \rightarrow C^\infty(\Omega) & & C^\infty(\Omega' \times \Omega') \rightarrow C^\infty(\Omega \times \Omega)       \\
  &v \mapsto v \circ \chi=v^* & &K(x',y') \mapsto K(\chi(x),\chi(y))|\deter D\chi(y)|=K^*(x,y).
\end{align*}
This Pull back has the property:
\begin{equation} 
\begin{split}
K^*v^* & = \int_{\Omega}K(\chi(x),\chi(y))v(\chi(y))|\deter D\chi(y)|dy  \\
 & = \int_{\Omega'}K(\chi(x),y')v(y')\# \chi^{-1}(y')dy' =(K(v\# \chi^{-1}))^*.
\end{split}
\end{equation}

Where $\# \chi^{-1}:\Omega' \rightarrow \bar{\n}$ is the function counting the number if pre-images and $v \in C^\infty_0(\Omega')$. We note that the the change of variables is well defined if and only if one of the two integrals is defined.
If $\chi$ is a diffeomorphism we have the usual fonctorial property $K^*v^*=(Kv)^*$ which permits the definition of operators with kernels on manifolds.\\ The classic result on the change of variables in pseudo-differential operators is that for $A\in S^m_{loc}(\Omega' \times \r^d)$ properly supported with kernel K then the operator defined by $K^*$ is a pseudo-differential operator $A^*$ of order m on $\Omega$ which is also properly supported. Thus it can be seen as the stability of this sub-class of operators of kernels under the pull back by diffeomorphisms  (modulo a $C^\infty$ kernel as in Remark \ref{paracomposition_Notions of microlocal analysis_Pseudodifferential Calculus_remark on properly supported operators} ) and thus are well defined on manifolds by the same process. Before we start by presenting those classic results we will discuss why they are essentially optimal.\\

We start by computing for a pseudo-differential operator defined by $a\in S^m(\Omega' \times \r^d)$ with kernel K and $\chi:\Omega \rightarrow \Omega'$ a $C^\infty$ map:
\begin{align*}
K^*u&=\int_{\Omega}K(\chi(x),\chi(y))u(y)|\deter D\chi(y)|dy\\
&=\int_{\Omega\times \Omega}(2\pi)^{-d}e^{i(\chi(x)-\chi(y)).\xi}a(\chi(x),\xi)u(y)|\deter D\chi(y)|dyd\xi
\end{align*}
thus $$K^*=op_{(\chi(x)-\chi(y)).\xi}(a(\chi(x),\xi)|\deter D\chi(y)|)$$ with $$a(\chi(x),\xi)|\deter D\chi(y)| \in S^m(\Omega \times \Omega \times \r^d),$$ because and all the derivatives of $\chi$  are bounded. Put $$\omega_\chi(x,y,\xi)=(\chi(x)-\chi(y)).\xi,$$ by the definitions on Fourier integral operators we have:
\[C_{\omega_\chi}=\set{(x,y)\in \Omega^2,\chi(x)=\chi(y)}.\]
We also see that $w_\chi$ is non degenerate on $\Omega \times \Omega$ if and only if $\chi$ is a local diffeomorphism. 
To sum up:
\begin{proposition}
Take $a\in S^m(\Omega' \times \r^d)$ and $\chi \in C^\infty(\Omega,\Omega')$. Then the pull-back of $\op(a)$ under $\chi$ is a Fourier Integral Operator with phase function $w_\chi$ and symbol $a(\chi(x),\xi)|\deter D\chi(y)| \in S^m(\Omega \times \Omega \times \r^d)$. We have:
\[C_{\omega_\chi}=\set{(x,y)\in \Omega^2,\chi(x)=\chi(y)}.\]
Moreover, $w_\chi$ is non-degenerate if and only of $\chi$ is a local diffeomorphism.
\end{proposition}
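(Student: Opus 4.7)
The computation immediately preceding the statement already exhibits $K^*$ as an oscillatory integral $op_{\omega_\chi}(\tilde a)$ with amplitude $\tilde a(x,y,\xi):=a(\chi(x),\xi)|\deter D\chi(y)|$, so the task reduces to three verifications: (a) that $\tilde a\in S^m(\Omega\times\Omega\times\r^d)$, (b) the identification of the projection $C_{\omega_\chi}$, and (c) the equivalence between the non-degeneracy of $\omega_\chi$ and $\chi$ being a local diffeomorphism. Throughout I would implicitly use that the derivatives of $\chi$ are bounded on $\Omega$; without this hypothesis one only lands in the local class $S^m_{loc}$, which is anyway what is needed for the subsequent results of the paper.

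For (a) I would differentiate $\tilde a$ by Leibniz in $(x,y)$ together with the Fa\`a di Bruno formula applied to the two composite factors $x\mapsto a(\chi(x),\xi)$ and $y\mapsto|\deter D\chi(y)|$. A generic term in $\partial^\alpha_\xi\partial^\beta_x\partial^{\beta'}_y\tilde a$ has the form
\[
(\partial^\alpha_\xi\partial^\gamma_{x'} a)(\chi(x),\xi)\cdot P_{\beta,\gamma}(D\chi(x),\dots,D^{|\beta|}\chi(x))\cdot Q_{\beta'}(D\chi(y),\dots,D^{|\beta'|+1}\chi(y)),
\]
where $P_{\beta,\gamma}$ and $Q_{\beta'}$ are universal polynomials in bounded derivatives of $\chi$. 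Since $a\in S^m$ gives $|(\partial^\alpha_\xi\partial^\gamma_{x'} a)(\chi(x),\xi)|\leq C_{\alpha,\gamma}(1+|\xi|)^{m-|\alpha|}$ uniformly in $x$, the required $S^m$-estimates for $\tilde a$ follow at once.

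For (b) I would simply compute $D_\xi\omega_\chi(x,y,\xi)=\chi(x)-\chi(y)$, so that the critical set reads
\[
C=\set{(x,y,\xi)\in\Omega\times\Omega\times(\r^d\setminus\set{0}),\ \chi(x)=\chi(y)},
\]
whose projection is exactly the announced $C_{\omega_\chi}=\set{(x,y)\in\Omega^2,\ \chi(x)=\chi(y)}$. For (c), at a point of $C$ the $d$ differentials of the functions $\partial\omega_\chi/\partial\xi_j=\chi_j(x)-\chi_j(y)$ are the vectors $(d\chi_j(x),-d\chi_j(y),0)$, and a nontrivial linear relation $\sum c_j(d\chi_j(x),-d\chi_j(y),0)=0$ exists exactly when $c\in\r^d$ lies in $\ker D\chi(x)^T\cap\ker D\chi(y)^T$. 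If $\chi$ is a local diffeomorphism, $D\chi(x)^T$ is already injective and the intersection is $\set{0}$, giving non-degeneracy. Conversely, testing at diagonal points $(x,x)\in C$ forces $\ker D\chi(x)^T=\set{0}$ at every $x\in\Omega$, hence $\chi$ is a local diffeomorphism. The only non-routine part is the bookkeeping in (a): keeping the chain-rule expansion honest and distinguishing the global class $S^m$ (which really requires bounded $\chi$-derivatives) from $S^m_{loc}$ in case that assumption is dropped; parts (b) and (c) are direct once the differential computations are performed.
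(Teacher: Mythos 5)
Your proposal is correct and takes essentially the same approach as the paper: the paper's proof consists exactly of the change-of-variables computation displaying $K^*=op_{\omega_\chi}(\tilde a)$ with $\tilde a(x,y,\xi)=a(\chi(x),\xi)\abs{\deter D\chi(y)}$, followed by the terse assertion that $\tilde a\in S^m(\Omega\times\Omega\times\r^d)$ ``because all the derivatives of $\chi$ are bounded'' and by stating the identifications of $C_{\omega_\chi}$ and the non-degeneracy criterion without further detail, while you supply the routine verifications (Leibniz/Fa\`a di Bruno for the symbol class, $D_\xi\omega_\chi=\chi(x)-\chi(y)$ for the critical set, and the $\ker D\chi(x)^T\cap\ker D\chi(y)^T$ argument with the diagonal test for the converse) that the paper leaves implicit. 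The one gloss you share with the paper is that $\abs{\deter D\chi(y)}$ is not a priori smooth unless $\deter D\chi$ keeps a constant sign, which is harmless once the local-diffeomorphism hypothesis is imposed (as it is in the subsequent change-of-variable theorems) and can otherwise be sidestepped by working with $\deter D\chi$ itself up to sign.
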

Now we ask the question if there exists a symbol $a^*$ such that: \[op_{\omega_\chi}(a(\chi(x),\xi)|\deter D\chi(y)|)=\op(a^*).\]
The classic result is that this true if $\chi$ is a diffeomorphism. Now we precise that it's essentially optimal as it could be seen by the following two examples: \\
\begin{itemize}
\item The necessity of the injectivity of $\xi$: we take $\chi=\abs{\ }$ which is a local diffeomorphism from $\r \setminus 0$ in to $\r^+_*$. We compute for $A=Id$ that is $a=1$:
\[op_{\omega_\chi}(a(\chi(x),\xi)|\deter D\chi(y)|)u=u(x)+u(-x),\]
and the part $u(.) \mapsto u(-.) $ is not a pseudo-differential operator.
\item The necessity of the local diffeomorphism hypothesis: we take $\chi=x^3$ which is a local diffeomorphism from $\r \setminus 0$ in to $\r$. We compute for $A=\frac{d}{dx} $ that is $a=i\xi$:
\[op_{\omega_\chi}(a(\chi(x),\xi)|\deter D\chi(y)|)u=\frac{u'(x)}{3x^2},\]
which is a pseudo-differential operator on $\r \setminus 0$ but cannot be extended to one on $\r$  with a regular symbol in 0. \footnote{In fact it can be treated in the more general frame of operators with singular symbols but this goes beyond the scope of this work.}
\end{itemize}
Now we present the classic results of change of variables in pseudo and para-differential operators under the hypothesis that $\chi$ is a diffeomorphism as they can be found in \cite{Alinhac86},\cite{Alinhac07} and \cite{Hormander71}.

\begin{theorem} \label{paracomposition_section Pull-back of pseudo and para- differential operators_theorem change of variable pseudo}
Let $\chi:\Omega \rightarrow \Omega'$ be a $C^\infty$ diffeomorphism and $A=a(x,D) \in S^m_{loc}(\Omega' \times \r^d)$ a properly supported pseudo-differential operator with kernel K.\\
Then the operator $A^*$  defined by $K^*$ that is:
\[\forall u \in C^\infty_0(\Omega), A^*u =\int_{\Omega}K(\chi(x),\chi(y))u(y)|\deter D\chi(y)|dy \] 
is a properly supported pseudo-differential operator with symbol
\[a^*(x,\xi)=(-1)^de^{-ix.\xi}\int_{\Omega \times \r^d} a(\chi(x),\eta) e^{i(\chi(x)-\chi(y)).\eta+iy.\xi}|\deter  D\chi(y)|dyd\eta \in  S^m_{loc}(\Omega \times \r^d).\]

An expansion of $a^*$ is given by:
\begin{equation}\label{paracomposition_section Pull-back of pseudo and para- differential operators_theorem change of variable pseudo eq1}
a^*(x,\xi) \sim \sum_{\alpha}\frac{1}{\alpha!}\partial^\alpha a(\chi(x),D\chi^{-1}(\chi(x))^t\xi)P_{\alpha}(\chi(x),\xi),
\end{equation}
where,
\[ P_{\alpha}(x',\xi)=D^\alpha_{y'}(e^{i(\chi^{-1}(y')-\chi^{-1}(x')-D \chi^{-1}(x')(y'-x')).\xi})_{|y'=x'} \]
and $P_{\alpha}$ is polynomial in $\xi$ of degree $\leq \frac{\abs{\alpha}}{2}$, with $P_{0}=1, P_{1}=0$.
\end{theorem}
\begin{remark}\label{paracomposition_section Pull-back of pseudo and para- differential operators_remark connection to regularised symbols}
This a classic result found commonly in the literature, And as in the Remark \ref{paracomposition_Notions of microlocal analysis_Paradifferential Calculus_remark on sigma}  an analogous result still holds in the class $\Sigma^m_0$ as will be shown in the proof of the next theorem.
\end{remark}
For para-differential operators we have:
\begin{theorem}  \label{paracomposition_section Pull-back of pseudo and para- differential operators_theorem change of variable para} 
Let $\chi:\Omega \rightarrow \Omega'$ be a $W^{1+\rho,\infty}_{loc}$ diffeomorphism with $D\chi \in W^{\rho,\infty}$ and $\rho \geq 0$. Consider $a \in \Gamma^m_r(\r^d)$ a properly supported paradifferential operator.\\
Then there exists a property supported $a^*\in  \Gamma^m_{min(r,\rho)}(\r^d)$ defined by:
\[(T_a u)\circ \chi=T_{a^*}(u\circ \chi)+(R\chi)u+Ru,\]
where $R\in \Gamma^{m-\min(r,\rho)}_0(\r^d)$ and $R\chi$ is a term depending essentially on $\chi$ and it's explicit formula is given in \eqref{reste changement de variable}.

Moreover $a^*$ has the local expansion:
\begin{equation}\label{paracomposition_section Pull-back of pseudo and para- differential operators_theorem change of variable para eq1}
a^*(x,\xi) \sim \sum_{\substack{ \alpha \\ \abs{\alpha}\leq \lfloor min(r,\rho) \rfloor}}\frac{1}{\alpha!}\partial^\alpha a(\chi(x),D\chi^{-1}(\chi(x))^t\xi)P_{\alpha}(\chi(x),\xi),
\end{equation}
where,
\[ P_{\alpha}(x',\xi)=D^\alpha_{y'}(e^{i(\chi^{-1}(y')-\chi^{-1}(x')-D \chi^{-1}(x')(y'-x')).\xi})_{|y'=x'} \]
and $P_{\alpha}$ is polynomial in $\xi$ of degree $\leq \frac{\abs{\alpha}}{2}$, with $P_{0}=1, P_{1}=0$.
\end{theorem}
 An analogous result still holds for para-differential operators modeled on the spaces $a\in C^r_*,r>0$ and $\chi \in C^{1+\rho}_*$.\\
 As we couldn't find a clear reference to this result in the literature, it is eluded to in \cite{Alinhac86}\footnote{part 3.3 point h, which can be found in pages 114-115.}, we give a simple proof of this theorem.
\begin{proof}
Taking $\psi $ a cut-off function with parameters $B>1, b>0$, and take $u\in C^\infty_0(\Omega)$ compute
\begin{align*}
(T_a (u \circ \chi^{-1})) \circ \chi &= op_{(\chi(x)-\chi(y)).\xi}(\sigma^\psi_a(\chi(x),\xi)\abs{\deter  D\chi(y)})u\\
			&=\int_{\Omega \times \r^d} e^{i(\chi(x)-\chi(y))\cdot \xi}\sigma^\psi_a(\chi(x),\xi)\abs{\deter  D\chi(y)}u(y)dy d\xi.
\end{align*}
As we remarked above the main contribution in this integral will come from $(x,y,\xi) \in C_{\omega_\chi}$ where we recall $\omega_\chi(x,y,\xi)=(\chi(x)-\chi(y))\cdot \xi$. To show this insert the smooth cut-off function $\theta(x,y)$ supported in a small neighborhood of the diagonal $(x,x)$.
\begin{align*}
(T_a (u \circ \chi^{-1})) \circ \chi &=\int_{\Omega \times \r^d} e^{i(\chi(x)-\chi(y))\cdot \xi}\sigma^\psi_a(\chi(x),\xi)\abs{\deter  D\chi(y)}u(y)dy d\xi\\
		&=\int_{\Omega \times \r^d} e^{i(\chi(x)-\chi(y))\cdot \xi}\theta(x,y)\sigma^\psi_a(\chi(x),\xi)\abs{\deter  D\chi(y)}u(y)dy d\xi\\
		&+\int_{\Omega \times \r^d} e^{i(\chi(x)-\chi(y))\cdot \xi}(1-\theta(x,y))\sigma^\psi_a(\chi(x),\xi)\abs{\deter  D\chi(y)}u(y)dy d\xi\\
\end{align*}
Now $\omega_\chi$ has no critical points on the support of $(1-\theta(x,y))$ and by integration by parts we have:
\begin{align*}
(T_a (u \circ \chi^{-1})) \circ \chi &=\int_{\Omega \times \r^d} e^{i(\chi(x)-\chi(y))\cdot \xi}\theta(x,y)\sigma^\psi_a(\chi(x),\xi)\abs{\deter  D\chi(y)}u(y)dy d\xi+Ru.
\end{align*}
with $R \in \Gamma^{m-\min(r,\rho)}_0.$ We now analyze when $y$ is close to $x$. By the mean value Theorem, for $y$ sufficiently close to $x$, there exists a invertible linear mapping $L_{x,y} \in W^{\rho,\infty}$ such that
\[
\begin{cases}
\chi(x)-\chi(y)=L_{x,y}\cdot(x-y)\\
L_{x,x}=D\chi(x).
\end{cases}
\]
Thus we get,
\begin{align*}
&(T_a (u \circ \chi^{-1})) \circ \chi \\ 
&=\int_{\Omega \times \r^d} e^{i(\chi(x)-\chi(y))\cdot \xi}\theta(x,y)\sigma^\psi_a(\chi(x),\xi)\abs{\deter  D\chi(y)}u(y)dy d\xi+Ru\\
		&=\int_{\Omega \times \r^d} e^{i(x-y)\cdot \xi}\theta(x,y)\sigma^\psi_a(\chi(x),{L^t_{x,y}}^{-1}\xi)\abs{\deter  D\chi(y)}\abs{\deter L^{-1}_{x,y}}u(y)dy d\xi+Ru.
\end{align*}
We get an operator with an amplitude
\[c(x,y,\xi)=\theta(x,y)\sigma^\psi_a(\chi(x),{L^t_{x,y}}^{-1}\xi)\abs{\deter  D\chi(y)}\abs{\deter L^{-1}_{x,y}} \in \Gamma^m_{\rho}(\r^d).\]
In the frequency domain this amplitude depends on terms coming from $\sigma^\psi_a(\chi(x),{L^t_{x,y}}^{-1}\xi)$, $\abs{\deter  D\chi(y)}$ and $\abs{\deter L^{-1}_{x,y}}$. 
Define the symbol $b$ associate to the amplitude $c$ by
\[
b(x,\xi)=\int_{\Omega \times \r^d}c(x,y,\eta)e^{i(x-y).(\eta-\xi)}dyd\eta,
\]
and define
 $$
K=\max(1,\sup D\chi^{-1},\sup D\chi), K'=K=\max(1,\sup D\chi)
 $$
 Suppose that the admissible cutoff $\psi$ defining $T_a=\op(\sigma^\psi_a)$ is given with parameters $B_1,B_2,B_3$, then define $\psi'$ to be the cutoff with the new parameters $\frac{B_1}{K'},\frac{B_2}{K}$ and $B_3'$ chosen accordingly to verify the desired constraints. Note that we can always choose $B_1,B_2$ and $B_3$ sufficiently large initially so that $\frac{B_1}{K'}$ and $\frac{B_2}{K}$ are still large enough to ensure the existence of $B_3'$.
 
 Now we compute 
 \[
 (T_a (u \circ \chi^{-1})) \circ \chi=\op(b)u+Ru=T^{\psi'}_b+R\chi+Ru,
 \]
 where $R\chi=\op(b-\sigma^{\psi'}_b)$. Note that all the high frequency terms depending on $\chi$ and $\chi^{-1 }$ are now in the term $R\chi$ by definition of $\psi'$ ( a detailed analysis if which is given in \eqref{reste changement de variable}). The result then follows from Proposition \ref{paracomposition_Notions of microlocal analysis_Link between Fourier Integral Operators and paradifferential operators_para amplitude} applied to $T_b$.
\end{proof}
\section{Paracomposition}\label{paracomposition_section Paracomposition}

\subsection{Main results for paracomposition on $\r^d$} \label{paracomposition_section Paracomposition_subsec paracomp on the euclidean space} 
We start by a formal computation, as in \cite{Taylor07}, using the Littlewood-Paley decomposition and two functions $u$ and $\chi$:
\begin{align}
u \circ \chi &= \sum_{k\geq 0}u(\Phi_{k+1} \chi)-u(\Phi_k \chi)=\sum_{j,k}u_j(\Phi_{k+1} \chi)-u_j(\Phi_k \chi)  \nonumber \\
		&=\sum_{j<k}u_j(\Phi_{k+1} \chi)-u_j(\Phi_k \chi)+\sum_{j\geq k}u_j(\Phi_{k+1} \chi)-u_j(\Phi_k \chi) \\
		&=\underbrace{\sum_{k \geq 1}\Phi_{k-1}u(\Phi_{k} \chi)-\Phi_{k-1}u(\Phi_{k-1} \chi)}_1+\underbrace{\sum_{k\geq 0}u_k(\Phi_{k} \chi)}_2. \nonumber
\end{align}

Heuristically the term 1 has frequencies of $u$  smaller than that of $\chi$ and as in classical paradifferential results will depend mainly on the regularity of $\chi$. This is indeed the main term in Bony's para-linearisation theorem modulo a more regular remainder:

\begin{align}
(1)    &=\sum_{k \geq 1}\bigg(\int_0^1 \Phi_{k-1}u'(\tau \Phi_k\chi+(1-\tau)\Phi_{k-1}) \chi d\tau\bigg) \phi_k\chi  \nonumber \\
	&=\underbrace{\sum_{k \geq 1} \Phi_{k-1}(u' \circ \chi)(\phi_{k} \chi)}_{T_{u'\circ \chi}\chi}\\ &+ \underbrace{\sum_{k \geq 1} \bigg(\int_0^1 \Phi_{k-1}u'(\tau \Phi_k\chi+(1-\tau)\Phi_{k-1} \chi)- \Phi_{k-1}(u' \circ \chi)d\tau\bigg) \phi_k\chi}_{R_0}.\nonumber
\end{align}
Same as term 1, heuristically term 2 will essentially depend on the regularity of u, with a remainder depending on $\chi$ and $u$ that is more regular when it's well defined. Thus (2) will naturally give rise to the paracomposition operator. To better understand it, let us suppose just for the next computation that $\chi$ is linear and invertible:
 
\begin{align*}
(2)    &=\sum_{k \geq 0} \int_{\r^d}\phi_k(\xi)\hat{u}(\xi)e^{i\Phi_k \chi(x).\xi}d\xi  \\
	&=\sum_{k \geq 0} \int_{\r^d}\phi_k(\Phi_k\left(\chi^{-1}\right)^t\xi)\hat{u}(\Phi_k\left(\chi^{-1}\right)^t\xi)e^{ix.\xi}|\Phi_k\left(\chi^{-1}\right)^t(\xi)|d\xi
\end{align*}
Thus we essentially have to look at how $\Phi_k\left(\chi^{-1}\right)^t$ modifies the frequencies and thus how it modifies the rings in the Littlewood-Paley decomposition.\\ Put $\set{k\geq 1, C_k'=\supp \ \phi_k(\Phi_k\left(\chi^{-1}\right)^t.)}$, we have:
 \[C_k'\approx \bigcup_{k-N'\leq l \leq k+N}C_l,\]
 where N and N' are such that $2^N>sup_{k,\r^d} \abs{\Phi_k\chi'}$ and $2^{N'}>sup_{k,\r^d} \abs{\Phi_k\chi'}^{-1}$ and the natural para-composition operator in this case is obtained by cutting the frequencies according to $C'_k$, this is exactly the ``lemme de recoupe" in Alinhac's work.
 
 Now we define $N$ as in the previous remark and compute:
 \begin{align}
(2)    &= \underbrace{\sum_{k\geq 0}  \sum_{\substack{l\geq 0 \\ l\leq k+N}}\phi_l(D)(u_k\circ \chi}_{\chi^\star u })\\ & +\underbrace{\sum_{k\geq 0} \sum_{\substack{l\geq 0 \\ l\leq k+N}}\phi_l(D)[u_k\circ\Phi_{k} \chi-u_k\circ \chi]}_{R_1}+\underbrace{\sum_k(Id-\Phi_{k+N})(D)u_k\circ \Phi_k \chi }_{R_2}.\nonumber
\end{align}

Henceforth for $u\in \mathcal{S}'$ we define
\[
\chi^\star u=\sum_{k\geq 0}  \sum_{\substack{l\geq 0 \\ l\leq k+N}}\phi_l(D)u_k\circ \chi,
\]
where $N$ is chosen as above.

 \begin{theorem} \label{paracomposition_section Paracomposition_subsec paracomp on the euclidean space_theorem defintion of paracomposition}
 Let $\chi:\r^d \rightarrow \r^d$ be a $C^{1+\rho}_{*,loc}$ map with $D\chi \in C^{\rho}_*$ and $\rho>0$ \footnote{Clearly when there is no diffeomorphism hypothesis on $\chi$ we can choose $\chi:\r^d \rightarrow \r^{d'}$ with $d\neq d'$ and have the same results but for clarity we chose to present the same dimensions.}.  Then for all $\sigma,s \in \r^*_+$ the following maps are extended continuously: 
\[ \chi^\star:C^\sigma_{*}(\r^d) \rightarrow  C^\sigma_{*}(\r^d) \text{ and } \chi^\star: C^\sigma_{*,loc}(\r^d) \rightarrow  C^\sigma_{*,loc}(\r^d).
\]
  If moreover $\chi$ is a diffeomorphism then we have the Sobolev estimates:  
  \[ \chi^\star:H^s(\r^d) \rightarrow H^s(\r^d) \text{ and } \chi^\star: H^s_{loc}(\r^d) \rightarrow H^s_{loc}(\r^d).
\]
Taking $\tilde{\chi}:\r^d \rightarrow \r^d$ a $C^{1+\tilde{\rho}}_{*,loc}$ map with $D\tilde{\chi} \in C^{\tilde{\rho}}_*$ and $\tilde{\rho}>0$, then the previous operation has the natural fonctorial property:
\[ \forall u \in C^{\sigma}_{*}(\r^d)\cup C^{\sigma}_{*,loc}(\r^d) , \chi^\star \tilde{\chi}^\star u= ({\chi \circ \tilde{\chi}})^\star u +Ru.\]   
 \[\text{with } R, \  R:C^{\sigma}_{*}(\r^d) \rightarrow  C^{\sigma+min(\rho,\tilde{\rho})}_{*}(\r^d), \ R:C^{\sigma}_{*,loc}(\r^d) \rightarrow  C^{\sigma+min(\rho,\tilde{\rho})}_{*,loc}(\r^d),\]
 and if $\chi$ and $\tilde{\chi}$ are diffeomorphisms:
 \[ \ \ \ \  R:H^{s}(\r^d) \rightarrow H^{s+min(\rho,\tilde{\rho})}(\r^d), \ \ \ \ R:H^{s}_{loc}(\r^d) \rightarrow H^{s+min(\rho,\tilde{\rho})}_{loc}(\r^d).\]
  \end{theorem}

  It is natural that the Sobolev estimates only hold when $\chi$ is a diffeomorphism because for example even the usual composition operation $u \mapsto u \circ \chi$ is not necessarily continuous on $L^p$ spaces, $p<\infty$. An extra hypothesis that appears in the literature is $\chi$ is a local diffeomorphism with all of it is local inverses uniformly bounded in $\dot{W}^{1,\infty}$.
  
 \begin{theorem}  \label{paracomposition_section Paracomposition_subsec paracomp on the euclidean space_theorem paralinearisation of composition}
 Let $u$ be a $W^{1,\infty}(\r^d)$ map and  $\chi$ be a $C^{1+\rho}_{*,loc}$ map with $D\chi \in C^{\rho}_*$ and $\rho>0$ . Then:
 \[u \circ \chi(x)=\chi^\star u(x)+ T_{u'\circ \chi}\chi(x)+ R_0(x)+R_1(x)+R_2(x)\]
 where the paracomposition given in the previous theorem verifies the estimates:
 \[\forall \sigma > 0, \norm{\chi^\star u(x)}_{\sigma}\leq C(\norm{D\chi}_{\infty})\norm{u(x)}_{\sigma},\]
 \[u'\circ \chi \in  \Gamma^0_{W^{0,\infty}(\r^d)}(\r^d) \ \ for\  u \text{ Lipchitz,}   \]
and the remainders verify the estimates: 
\begin{itemize}
\item In Zygmund Spaces, for $\sigma>0$:
\[\norm{R_0}_{1+\rho +min(1+\rho,\sigma)} \leq C \norm{D\chi}_{\rho}\norm{u}_{1+\sigma}\]
\[for \ i \in \set{1,2},\norm{R_i}_{1+\rho+ \sigma} \leq C(\norm{D\chi}_{\infty}) \norm{D\chi}_{\rho}\norm{u}_{1+\sigma}. \]
\item In Sobolev Spaces, for $s>\frac{d}{2}$ we get the following estimates
	\begin{itemize} 
	\item without the diffeomorphism hypothesis:
	\[  \norm{R_0}_{H^{1+\rho +min(1+\rho,s-\frac{d}{2})}} \leq C\norm{D\chi}_{\rho}\norm{u}_{H^{1+s}} \]
	\[ \norm{R_1}_{H^{1+\rho+s}} \leq C(\norm{D\chi}_{\infty}) \norm{D\chi}_{\rho}\norm{u}_{H^{1+s}}. \]
	\item Suppose moreover that $\chi$ is a diffeomorphism:
	\[ \norm{R_2}_{H^{1+\rho+s}} \leq C(\norm{D\chi}_{\infty},\norm{D\chi^{-1}}_{\infty}) \norm{D\chi}_{\rho}\norm{u}_{H^{1+s}}. \]
	\end{itemize}
The same estimates hold in the local spaces.
\end{itemize}
\end{theorem} 
As Alinhac remarked in \cite{Alinhac86}, a particular case of the previous theorem is Bony para-linearisation theorem but with the extra hypothesis of diffeomorphism, here it's is a full generalization because we dropped the diffeomorphism hypothesis. We find Bony's para-linearisation theorem when $\sigma=+\infty$, in this case only the term $T_{u'\circ \chi}\chi(x)$ appears and $\chi^\star u(x)$ is a part of the remainder. If on the other hand, $\chi \in C^\infty$, the term $T_{u'\circ \chi}\chi(x)$ becomes a part of the remainder and the paracomposition $\chi^\star u(x)$ coincides with the usual composition modulo a regularizing  operator. Thus Theorem \ref{paracomposition_section Paracomposition_subsec paracomp on the euclidean space_theorem paralinearisation of composition} appears as a linearisation theorem of $u \circ \chi$ as the sum of two terms, one depending mainly on the regularity of $u$ (and ``less" of $\chi$) and the other depending mainly on the regularity of $\chi$ (and ``less" of $u$).

\begin{remark} \label{paracomposition_section Paracomposition_subsec paracomp on the euclidean space_rem simplest example}
The simplest example for the paracomposition operator is when $\chi(x)=Ax$ is a linear operator and in that case we see that if $N$ is chosen sufficiently large in the definition:
\[u(Ax) = (Ax)^*u,\text{ and } T_{u'(Ax)}Ax = 0.\]
\end{remark}

\begin{remark} \label{paracomposition_section Paracomposition_subsec paracomp on the euclidean space_rem change of cutoff in paralinearisation theorem}
The proof of Theorem \ref{paracomposition_section Paracomposition_subsec paracomp on the euclidean space_theorem paralinearisation of composition} tell us that the if in the sum defining $\chi^\star$ we choose a different $N'\geq N$ then the operator is modified by a $\rho$ regularizing operator. 
 \end{remark}
\begin{theorem} \label{paracomposition_section Paracomposition_subsec paracomp on the euclidean space_theorem commutation para op and paracomp}
Consider $a \in \Gamma^m_\beta(\r^d)$, with $\beta \geq 0$, $\chi:\r^d \rightarrow \r^d$ a $C^{1+\rho}_{*,loc}$ map with $D\chi \in C^{\rho}_*$, $\rho>0$ and $1+\rho \notin \n$. Then there exists $q \in \Gamma^{m-\beta}_0(\r^d)$ such that we have the following formal symbolic calculus rule:
\begin{align*}
   \chi^\star T_a u &=op_{\omega_\chi}\bigg(\sigma_a(\chi(x),\xi)\frac{|\deter  D\chi(y)|}{\# \chi^{-1}(\chi(y))}\bigg) \chi^\star u+op_{\omega_\chi}\bigg(\sigma_q(\chi(x),\xi)\frac{|\deter  D\chi(y)|}{\# \chi^{-1}(\chi(y))}\bigg) \chi^\star u .
\end{align*}
\end{theorem}

To join Alinhac's work, the following proposition makes the link between his definition of the paracomposition operator in the case of a diffeomorphism and the one given here.

\begin{theorem} \label{paracomposition_section Paracomposition_subsec paracomp on the euclidean space_theorem link to Alinhac}
 Let $u$ be $W^{1,\infty}(\r^d)$ map and $\chi$ be a $C^{1+\rho}_{*,loc}$ diffeomorphism with $D\chi \in C^{\rho}_*$ and $\rho>0$.
Consider $\tilde{N}$ such that $2^{\tilde{N}}>sup_{k,\r^d} \abs{\Phi_k\chi'}^{-1}$ and $2^{\tilde{N}}>sup_{k,\r^d} \abs{\Phi_k\chi'}$. Put Alinhac's paracomposition operator:
 \[ \chi^* u=\sum_{k\geq 1}  \sum_{\substack{l\geq 0 \\ \abs{l-k} \leq \tilde{N}}} \phi_l(D) u_k\circ \chi  \]
 \[\text{then:} \ \chi^* u=\chi^\star u+R_3,\]
 Where the remainder verifies:
 \begin{itemize}
\item In Zygmund Spaces, for $\sigma>0$:
\[\norm{R_3}_{1+\rho+ \sigma} \leq C(\norm{D \chi^{-1}}_\infty,\norm{D\chi}_{\infty}) \norm{D\chi}_{\rho}\norm{u}_{1+\sigma}. \]
\item In Sobolev Spaces, for $s>\frac{d}{2}$:
\[ \norm{R_3}_{H^{1+\rho+s}} \leq C(\norm{D \chi^{-1}}_\infty,\norm{D\chi}_{\infty}) \norm{D\chi}_{\rho}\norm{u}_{H^{1+s}}. \]
The same estimates hold in the local spaces.\\
Take $a \in \Gamma^m_{\beta}(\r^d)$ and $q$ as in Theorem \ref{paracomposition_section Paracomposition_subsec paracomp on the euclidean space_theorem commutation para op and paracomp} then:
\begin{align*}
   \chi^\star T_a u &=T_{a^*} \chi^\star u+T_{q^*}\chi^\star u \\
   \chi^* T_a u &=T_{a^*} \chi^* u+T_{{q'}^*} \chi^* u \text{ with } q' \in \Gamma^{m-\beta}_{0}(\r^d).  \\
\end{align*}
\end{itemize}
\end{theorem}
\begin{remark} \label{paracomposition_section Paracomposition_subsec paracomp on the euclidean space_rem change of cutoff Alinhac}
As in remark \ref{paracomposition_section Paracomposition_subsec paracomp on the euclidean space_rem change of cutoff in paralinearisation theorem}, the proof of Theorem \ref{paracomposition_section Paracomposition_subsec paracomp on the euclidean space_theorem link to Alinhac} tell us that the if in the sum defining $\chi^*$ we choose a different $\tilde{N}'\geq \tilde{N}$ then the operator is modified by a $\rho$ regularizing operator. 
 \end{remark}
 \begin{remark} \label{paracomposition_section Paracomposition_subsec paracomp on the euclidean space_rem exact remainder in change of variable para}
As a corollary of Theorem \ref{paracomposition_section Paracomposition_subsec paracomp on the euclidean space_theorem link to Alinhac} we get that in Theorem \ref{paracomposition_section Pull-back of pseudo and para- differential operators_theorem change of variable para}:
\begin{equation}\label{reste changement de variable}
R\chi=T_{(T_au)'\circ \chi}\chi-T_{a^*}T_{u'\circ \chi}\chi.
\end{equation}
 \end{remark}
 \begin{remark}\label{paracomposition_section Paracomposition_subsec paracomp on the euclidean space_rem paracomposition from R to T}
 All of the result of this section extend naturally to the functions and operators defined on the Torus.
 \end{remark}
\subsection{Proofs}
We will give the proof for the estimates in global spaces, for local spaces it is sufficient to see that the given estimates hold under the hypothesis that all the functions used have a compact support and to pass to local spaces estimates it is sufficient to multiply by functions in $C^\infty_0$ which don't modify the estimates given (we don't make any boundary estimates).
\subsubsection*{Proof of Theorem \ref{paracomposition_section Paracomposition_subsec paracomp on the euclidean space_theorem defintion of paracomposition} and \ref{paracomposition_section Paracomposition_subsec paracomp on the euclidean space_theorem paralinearisation of composition}}
Take $\chi:\r^d \rightarrow \r^d$ be a $C^{1+\rho}_*$ map with $\rho>0$ put $\b=\b(0,N+1)$.\\
We start by the Zygmund spaces estimates (thus we don't suppose that $\chi$ is a diffeomorphism): 
\[\norm{\Phi_{k+N}u_k\circ \chi}_\infty \leq C \norm{u_k}_\infty  \leq 2^{-k\sigma}  \norm{u}_\sigma \]
and $\supp \ \Phi_{k+N}u_k\circ \chi \subset 2^k \b$.\\
Thus by Proposition \ref{paracomposition_Notations and functional analysis_proposition Zygmund spaces on balls}, for $\sigma >0$:
\[\chi^\star u \in C^\sigma_*(\r^d) \text{ and } \norm{\chi^\star u}_{\sigma} \leq \frac{C(N)}{1-2^{-\sigma}} \norm{ u}_{\sigma}.  \]
For Sobolev estimates we suppose that $\chi$ is a diffeomorphism and by the change of variables formula we have for $s>0$:
\[\norm{\Phi_{k+N}u_k\circ \chi}_{L^2} \leq C(\norm{D \chi^{-1}}_\infty) \norm{u_k}_{L^2}  \leq C (\norm{D \chi^{-1}}_\infty)2^{-ks}  \norm{u}_{H^s}\]
  and $ \supp \Phi_{k+N}u_k\circ \chi \subset 2^k \b$. \\
Thus by Proposition \ref{paracomposition_Notations and functional analysis_proposition Zygmund spaces on balls}, for $\sigma >0$:
\[\chi^\star u \in H^s(\r^d) \text{ and } \norm{\chi^\star u}_{H^s} \leq \frac{C(N,\norm{D \chi^{-1}}_\infty)}{1-2^{-s}} \norm{ u}_{H^s}.  \]
Now we compute the estimates on the remainders in the linearisation formula.
\[R_0=\sum_{k \geq 1} \bigg( \int_0^1 \Phi_{k-1}u'(\tau \Phi_k\chi+(1-\tau)\Phi_{k-1} \chi)- \Phi_{k-1}(u' \circ \chi)d\tau\bigg) \phi_k\chi=\sum_k r^0_k \chi_k, \]
and for $ r^0_k$ we compute
\begin{align*}
 &\int_0^1 \Phi_{k-1}u'(\tau \Phi_k\chi+(1-\tau)\Phi_{k-1} \chi)- \Phi_{k-1}(u' \circ \chi)d\tau \\
 &=\int_0^1 \Phi_{k-1}\left(u'\circ \left(\tau \Phi_k\chi+(1-\tau)\Phi_{k-1} \chi\right)- u' \circ \chi\right)d\tau \\
  &=\int_0^1 \Phi_{k-1}\left(\int_0^1 u''\circ \left(t\left(\tau \Phi_k\chi+(1-\tau)\Phi_{k-1} \chi\right)+(1-t)\chi\right)dt\left[\left(\tau \Phi_k\chi+(1-\tau)\Phi_{k-1} \chi\right)-\chi\right]\right)d\tau \\
   &=\int\limits_{[0,1]^2} \Phi_{k-1}\left( u''\circ \left(t\left(\tau \Phi_k\chi+(1-\tau)\Phi_{k-1} \chi\right)+(1-t)\chi\right)\left[\left(\tau (\Phi_k\chi-\chi)+(1-\tau)(\Phi_{k-1} \chi-\chi)\right)\right]\right)dt d\tau.
 \end{align*}
 Thus, by a standard paraproduct decomposition, if $\sigma <1$:
\[\norm{r^0_k}_\infty \leq C 2^{k(-\sigma-\rho)}, \]
and if $\sigma=1$:
\[\norm{r^0_k}_\infty \leq C k 2^{k(-1-\rho)}\leq C2^{-k}, \]
Which sums up in $\norm{r^0_k}_\infty \leq C2^{-min(1+\rho,\sigma)k}$ for $\sigma \leq 1$. By the same computations we have analogous estimates on $\norm{\partial^\alpha r^0_k}$ which gives the desired result for $\sigma>1$ and clearly $r^0_k \in C^\infty$ which gives the desired estimates on $R_0$ by Lemma \ref{paracomposition_Notations and functional analysis_lemme Meyer multiplier} and the fact that $r^0_0=0$, both in the Sobolev et Zygmund cases without the diffeomorphism hypothesis.\\
\begin{align*}
R_1&=\sum_{k\geq 0}\phi_{k+N}(D)[u_k\circ\Phi_{k} \chi-u_k\circ \chi]\\
&=\sum_{k\geq 0}\phi_{k+N}(D)\left[\int_0^1 u_k'(t\Phi_{k}+(1-t)\chi)dt)(\Phi_{k}\chi - \chi)\right]\\
&=\sum_{k\geq 0}\phi_{k+N}(D)[r^1_k(\Phi_{k}\chi - \chi)].
\end{align*}
We have:
\[\norm{r^1_k}_\infty \leq C 2^{-k\sigma} \]
combining this with Propositions \ref{paracomposition_Notations and functional analysis_proposition Zygmund spaces on balls}, \ref{paracomposition_Notations and functional analysis_proposition Sobolev spaces on balls} and the fact that $r^1_0=0$ we get the desired estimates again in both in the Sobolev et Zygmund cases without the diffeomorphism hypothesis.\\
The proof of the estimates on $R_2$ relies on oscillatory integral techniques that come from Lemma \ref{paracomposition_Notions of microlocal analysis_Fourier Integral Operators_lemme fonda integral oscillante} . For the sake of completion we will give the explicit computations without directly using the lemma.\\
\begin{align*}
R_2(x)&=\sum_k(Id-\Phi_{k+N})(D)u_k\circ \Phi_k \chi(x).
\end{align*}
We will prove that for $j \geq k+N+1, \nu\geq \rho>0$, we have:
\begin{equation} \label{paracomposition_section Paracomposition_subsec paracomp on the euclidean space_subsubsection proof eq1}
\norm{\phi_j(D)u_k \circ \Phi_k \chi}_\infty \leq C_\nu(\norm{D\chi}_{\rho})2^{-j\nu} 2^{k(\nu-\rho)}\norm{u_k}_\infty
\end{equation}
which will be sufficient to give the Zygmund estimates on $R_2$ because we will have:
\begin{align*}
\norm{\phi_j(D)R_2}_\infty&\leq \sum_{\substack{k\geq 0 \\ k\leq N-j+1}}\norm{\phi_j(D)u_k \circ \Phi_k \chi}_\infty \\
 						&\leq\sum_{\substack{k\geq 0 \\ k\leq N-j+1}} C_\nu(\norm{D\chi}_{\rho})2^{-j\nu} 2^{k(\nu-\rho)}\norm{u_k}_\infty  \\
					&\leq \sum_{\substack{k\geq 0 \\ k\leq N-j+1}} C_\nu(\norm{D\chi}_{\rho})2^{-j\nu} 2^{k(\nu-\rho)}\norm{u_k}_\infty \\
					 & \leq \sum_{\substack{k\geq 0 \\ k\leq N-j+1}} C_\nu(\norm{D\chi}_{\rho})2^{-j\nu} 2^{k(\nu-\rho-\sigma-1)}\norm{u}_{1+\sigma},
\end{align*}
Taking $\nu>1+\rho+\sigma$ we dominate the last expression by:
\[ C_\nu(\norm{D\chi}_{\rho})2^{-j(\rho+\sigma+1)}\norm{u}_{1+\sigma}\]
which gives the desired Zygmund estimate.\\

For the Sobolev estimates we will prove that:
\begin{equation} \label{paracomposition_section Paracomposition_subsec paracomp on the euclidean space_subsubsection proof eq2}
\norm{\phi_j(D)u_k \circ \Phi_k \chi}_2 \leq C_\nu(\norm{D\chi}_{\rho})2^{-j\nu} 2^{k(\nu-\rho)}\norm{u_k \circ \Phi_k \chi}_2,
\end{equation}
which then necessitates the diffeomorphism hypothesis on $\chi$ to have:
\begin{equation*}
\norm{\phi_j(D)u_k \circ \Phi_k \chi}_2 \leq C_\nu(\norm{D\chi}_{\rho},\norm{D\chi^{-1}}_\infty)2^{-j\nu} 2^{k(\nu-\rho)}\norm{u_k }_2,
\end{equation*}
And the desired estimates follow exactly as in the Zygmund case.\\
Now we prove \eqref{paracomposition_section Paracomposition_subsec paracomp on the euclidean space_subsubsection proof eq1} and \eqref{paracomposition_section Paracomposition_subsec paracomp on the euclidean space_subsubsection proof eq2}, we will follow Taylor's proof of Alinhac's lemma given in Appendix B of Chapter 2 of  \cite{Taylor07}.  To make the desired estimates we will put in a test function $f \in C^\infty_b$ as it's usually done with oscillatory integral estimates:

\begin{align} \label{paracomposition_section Paracomposition_subsec paracomp on the euclidean space_subsubsection proof eq3}
\phi_j(D)f u_k \circ \Phi_k \chi(x)&=\int e^{i(x-y).\xi}\phi_j(\xi)\phi_k(\eta)f(y)\hat{u_k}(\eta)e^{i \Phi_k \chi(y).\eta }d\eta dy d\xi 
\end{align}
Set 
\[\omega_k(y,\eta,\xi)=\Phi_k \chi(y).\eta-y.\xi, \]
\[L_k(y,\eta,\xi,\partial_y)=\frac{\Phi_k \chi'(y)^t.\eta-y.\xi}{i\abs{\Phi_k \chi'(y)^t.\eta-y.\xi}^2}.\nabla_y.\]
Given the definition of N we have:
\[\abs{\Phi_k \chi'(y)^t.\eta-y.\xi}\geq C(\abs{\eta}+\abs{\xi}) \text{ on } \supp \ \phi_j(\xi)\phi_k(\eta), \]
Thus $L_k$ is well defined and regular, moreover $L_k e^{i\omega_k}=e^{i\omega_k}$. Integrating by parts in \eqref{paracomposition_section Paracomposition_subsec paracomp on the euclidean space_subsubsection proof eq2}:
\begin{align*}
\phi_j(D)f u_k \circ \Phi_k \chi(x)&=\int e^{ix.\xi}\phi_j(\xi)\phi_k(\eta)\hat{u_k}(\eta)e^{i w_k } (L_k^t)^\nu f(y)d\eta dy d\xi. 
\end{align*}
Note that $(L_k^t)^\nu f$ is homogeneous with degree $-\nu$ in $(\eta,\xi)$, and smooth on the support of $\phi_j(\xi)\phi_k(\eta)$. Also 
\begin{align} \label{paracomposition_section Paracomposition_subsec paracomp on the euclidean space_subsubsection proof eq4}
\abs{(L_k^t)^\nu f(y)}\leq  C(\norm{f}_\nu,\norm{D\chi}_{\rho}) 2^{\nu-\sigma} \text{ on } \abs{\xi}^2+\abs{\eta}^2=1. 
\end{align}
Next on a box containing $\supp \ \phi_j(\xi)\phi_k(\eta)$, write
\[(L_k^t)^\nu f(y)=\sum_{(\alpha,\beta) \in \Lambda} a_{k \nu \alpha \beta}(y)e^{i\alpha.\xi+i\beta.\eta}=2^{-j \nu}\sum_{(\alpha,\beta) \in \Lambda} a_{k \nu \alpha \beta}(y)e^{i2^{-j}\alpha.\xi+i2^{-j}\beta.\eta},\]
where $\Lambda$ is an appropriate lattice and 
\begin{align} \label{paracomposition_section Paracomposition_subsec paracomp on the euclidean space_subsubsection proof eq5}
\sum_{(\alpha,\beta) \in \Lambda} \norm{a_{k \nu \alpha \beta}}_\infty \leq  C(\norm{f}_\nu,\norm{D\chi}_{\rho}) 2^{\nu-\sigma}.
\end{align}
So \eqref{paracomposition_section Paracomposition_subsec paracomp on the euclidean space_subsubsection proof eq3} becomes for $j\geq1$:
\begin{align} \label{paracomposition_section Paracomposition_subsec paracomp on the euclidean space_subsubsection proof eq6}
&\phi_j(D)f u_k \circ \Phi_k \chi(x)\\
&=2^{-j \nu}\sum_{(\alpha,\beta) \in \Lambda} \int e^{ix.\xi}\phi_j(\xi)\phi_k(\eta)\hat{u_k}(\eta)e^{i w_k }  a_{k \nu \alpha \beta}(y)e^{i2^{-j}\alpha.\xi+i2^{-j}\beta.\eta}d\eta dy d\xi \nonumber \\
						&= 2^{-j \nu}\sum_{(\alpha,\beta) \in \Lambda} \int e^{i(x-y).\xi}\phi_j(\xi)u_k(\Phi_k \chi(y)+2^{-j}\beta)  a_{k \nu \alpha \beta}(y)e^{i2^{-j}\alpha.\xi} dy d\xi \nonumber \\
						&= 2^{-j \nu}\sum_{(\alpha,\beta) \in \Lambda} \int e^{i(x-y).\xi}2^{jn}\hat{\phi_1}(2^{j}(x-y)+\alpha)u_k(\Phi_k \chi(y)+2^{-j}\beta)  a_{k \nu \alpha \beta}(y)dy .\nonumber \\
						&= 2^{-j \nu}\sum_{(\alpha,\beta) \in \Lambda} (a_{k \nu \alpha \beta}\cdot u_k(\Phi_k \chi+2^{-j}\beta)) \ast g_\alpha(x),
\end{align}
Where $g_\alpha(x)=2^{jn}\hat{\phi_1}(2^{j}x+\alpha)$ thus
\begin{align} \label{paracomposition_section Paracomposition_subsec paracomp on the euclidean space_subsubsection proof eq7}
\norm{g_\alpha}_{L^1}&=2^{jn}\int\abs{\hat{\phi_1}(2^{j}x+\alpha)}dx=\norm{\hat{\phi_1}}_{L^1}.
\end{align}
For $j=0$ we have an analog inequality.\\
Using the classic Young and H{\"o}lder inequalities combined with \eqref{paracomposition_section Paracomposition_subsec paracomp on the euclidean space_subsubsection proof eq5}, \eqref{paracomposition_section Paracomposition_subsec paracomp on the euclidean space_subsubsection proof eq7} and taking $f \rightarrow 1$ gives us \eqref{paracomposition_section Paracomposition_subsec paracomp on the euclidean space_subsubsection proof eq1} and \eqref{paracomposition_section Paracomposition_subsec paracomp on the euclidean space_subsubsection proof eq2}. This concludes the proof.

\subsubsection*{Proof of Theorem \ref{paracomposition_section Paracomposition_subsec paracomp on the euclidean space_theorem commutation para op and paracomp}}
Take $a \in \Gamma^m_\beta(\r^d)$, with $\beta \geq 0$ and $\chi:\r^d \rightarrow \r^d$ a $C^{1+\rho}_*$ map with $\rho>0$. We compute:
\begin{equation}\label{paracomposition_section Paracomposition_subsec paracomp on the euclidean space_subsubsection proof eq8}
\chi^\star T_a u=\sum_{k\geq0}\Phi_{k+N}[(T_au)_k \circ \chi],
\end{equation}
Note that $(T_au)_k$ can been as $T_{\phi_k}T_au$ and seeing this a modification of the cut-off function by Proposition \ref{paracomposition_Notions of microlocal analysis_Paradifferential Calculus_definition regularisation of a symbol by cutoff} we get:
\begin{align*}
(T_au)_k&=T_{\phi_k}T_au=T_{a}T_{\phi_k}u+T_{q^k}u ,\text{ with } q^k \in \Gamma^{m-\beta}_0(\r^d).
\end{align*}
Put $q=\sum q^k$
 then \eqref{paracomposition_section Paracomposition_subsec paracomp on the euclidean space_subsubsection proof eq8} becomes:
\begin{align*}
\chi^\star T_a u=\sum_{k\geq0}\Phi_{k+N}[(T_a u_k) \circ \chi] +\sum_{k\geq0}\Phi_{k+N}[(T_{q^k} u_k) \circ \chi].
\end{align*}
And the formal discussion and computations in part \ref{paracomposition_section Pull-back of pseudo and para- differential operators} give the desired result.
\subsubsection*{Proof of Theorem \ref{paracomposition_section Paracomposition_subsec paracomp on the euclidean space_theorem link to Alinhac}}
The only thing left to prove is the estimate on $R_3$.\\
\[R_3=\sum_k\underbrace{\Phi_{k-\tilde{N}}(D)u_k\circ \Phi_k \chi(x)}_1 + \phi_N(D) u_k\circ \chi\]
 $ \phi_N(D) u_k\circ \chi$ is $C^\infty$ so we only have to the estimate to the first term on the left hand side.
Estimating 1 is exactly as \eqref{paracomposition_section Paracomposition_subsec paracomp on the euclidean space_subsubsection proof eq3} but with $\phi_j$ substituted by $\Phi_{k-\tilde{N}}$. The core of the estimation relies on the fact that $L_k$ should be well defined and regular on $ \supp \Phi_{k-\tilde{N}}(\xi) \phi_k(\eta)$ which is the case give our choice of $\tilde{N}$ and the fact $k\geq1$. We also have the estimate:
\[\abs{\Phi_k \chi'(y)^t.\eta-y.\xi}\geq C(\abs{\eta}+\abs{\xi}) \text{ on } \supp \Phi_{k-\tilde{N}}(\xi)\phi_k(\eta).\]
The proof than exactly follows as for $R_2$.

\subsection{Main results for paracomposition on open subsets}
The previous definition of the operator $\chi^\star$ on functions defined on $\r^d$ relied heavily on the Littelwood-Paley theory which doesn't make it immediately extendable to the open domain case. In \cite{Alinhac86}, Alinhac was able to define such an operator profiting from the continuity of $\chi^*$ on the local function spaces and a partition of unity on the open domains. More precisely consider $(V_i,\Theta_i)$ a partition of unity locally finite of $\Omega'$ then:
\[u\circ \chi=\sum_i \Theta_i u \circ \chi\]
where $\Theta_i u$ is seen as a function of $\r^n$ with the natural extension by 0. In order to have the same natural extension for $\chi$, $$\chi^{-1}(\supp \Theta_i)$$ needs to be compact we thus have to suppose that $\chi$ is a proper map\footnote{Note that this extra hypothesis is needed for the methods used to work and is not intrinsic to the problem. Also this hypothesis is immediately verified in the diffeomorphism case treated by Alinhac.}. Under this hypothesis consider $\zeta_i\in C^\infty_0(\Omega)$ such that $\zeta_i=1$ on $\chi^{-1}(\supp \Theta_i)$:
\begin{equation}\label{paracomposition_section Paracomposition_subsec paracomp on open sets eq 1}
u\circ \chi=\sum_i \zeta_i \Theta_i u \circ \zeta_i \chi,
\end{equation}
where $\zeta_i \chi$ is seen as a function of $\r^n$ with the natural extension by $0$. Henceforth we define:
\[
\chi^\star u=\sum_i \zeta_i \cdot (\zeta_i \chi)^{\star}\Theta_i u.
\]

 \begin{theorem} \label{paracomposition_section Paracomposition_subsec paracomp on open sets_theorem definition of paracomposition}
 Let $\chi:\Omega \rightarrow \Omega'$ be a $C^{1+\rho}_{*,loc}$ proper map with $D\chi \in C^\rho_*$ and $\rho>0$. Consider $(V_i,\Theta_i)$ a partition of unity locally finite of $\Omega'$ and $\zeta_i$ the associated functions as previously.  Then for all $\sigma,s \in \r^*_+$ the following maps are continuously extended: 
\[
   \chi^\star: C^\sigma_{*}(\Omega') \rightarrow  C^\sigma_{*}(\Omega) \text{ and } \chi^\star:C^\sigma_{*,loc}(\Omega') \rightarrow  C^\sigma_{*,loc}(\Omega),
\]
  if moreover $\chi$ is a diffeomorphism then we have the Sobolev estimates:  
   \[ \chi^\star:H^s(\Omega') \rightarrow H^s(\Omega) \text{ and }  \chi^\star:H^s_{loc}(\Omega') \rightarrow H^s_{loc}(\Omega),
\] 
 where $\Theta_iu$ and $\zeta_i \chi$ are treated as functions on $\r^d$. And
In the sum defining each $(\zeta_i \chi)^{\star}$ a choice $$N_i,2^{N_i}\geq\sup_{\supp \Theta_i}\chi'$$ is made by the definition in section \ref{paracomposition_section Paracomposition_subsec paracomp on the euclidean space}, but by remark \ref{paracomposition_section Paracomposition_subsec paracomp on the euclidean space_rem change of cutoff in paralinearisation theorem} in order to simplify the computations we can take the same $$N\geq N_i,2^{N}\geq\sup_{\Omega}\chi'$$ uniformly for all the operators and this modifies the definition by a $\rho$ regularizing operator. \\
Making a different choice $(V'_i,\Theta'_i,\zeta'_i)$, which gives a different operator $\chi^{\star}_1$ then
\[ \forall u \in C^{\sigma}_{*}(\r^d)\cup C^{\sigma}_{*,loc}(\r^d) , \chi^\star u= \chi^{\star}_1 u +R'u.\]   
with $R'u \in C^\infty$.\\

Consider $\tilde{\chi}:\Omega' \rightarrow \Omega''$ a a $C^{1+\tilde{\rho}}_{*,loc}$ proper map with $D\tilde{\chi} \in C^{\tilde{\rho}}_*$ with $\tilde{\rho}>0$, then the previous operation has the natural fonctorial property:
\[ \forall u \in C^{\sigma}_{*}(\Omega'')\cup C^{\sigma}_{*,loc}(\Omega'') , \chi^\star \tilde{\chi}^\star u= ({\chi \circ \tilde{\chi}})^\star u +\tilde{R}u.\]   
 \[\text{with } \tilde{R}, \  \tilde{R}:C^{\sigma}_{*}(\Omega'') \rightarrow  C^{\sigma+min(\rho,\tilde{\rho})}_{*}(\Omega), \ \tilde{R}:C^{\sigma}_{*,loc}(\Omega'') \rightarrow  C^{\sigma+min(\rho,\tilde{\rho})}_{*,loc}(\Omega),\]
 and if $\chi$ and $\tilde{\chi}$ are diffeomorphisms: 
 \[  \tilde{R}:H^{s}(\Omega'') \rightarrow H^{s+min(\rho,\tilde{\rho})}(\Omega), \ \tilde{R}:H^{s}_{loc}(\Omega'') \rightarrow H^{s+min(\rho,\tilde{\rho})}_{loc}(\Omega).\]
  \end{theorem}
   
 \begin{theorem}  \label{paracomposition_section Paracomposition_subsec paracomp on open sets_theorem paralinearisation of composition}
 Let u be a $W^{1,\infty}(\Omega)$ map and $\chi$ be a be a $C^{1+\rho}_{*,loc}$ proper map with $D\chi \in C^\rho_*$ and $\rho>0$. Then:
 \[u \circ \chi(x)=\chi^\star u(x)+ T_{u'\circ \chi}\chi(x)+ R_0(x)+R_1(x)+R_2(x)\]
 where the paracomposition given in the previous theorem verifies the estimates:
 \[\forall \sigma > 0, \norm{\chi^\star u(x)}_{\sigma}\leq C(\norm{D\chi}_{\infty})\norm{u(x)}_{\sigma},\]
 \[u'\circ \chi \in  \Gamma^0_{W^{0,\infty}(\Omega)}(\r^d) \text{ for }  u \text{  Lipchitz.}   \]
 The remainders are given by:
 \[R_0=\sum_{i}\sum_{k \geq 1} \zeta_i\bigg(\int_0^1 \Phi_{k-1}\Theta_i u'(\tau \Phi_k\zeta_i \chi+(1-\tau)\Phi_{k-1} \zeta_i \chi)- \Phi_{k-1}(\Theta_i u' \circ \zeta_i \chi)d\tau\bigg) \phi_k\zeta_i \chi,\]
 \[R_1=\sum_{i}\sum_{k\geq 0} \sum_{\substack{l\geq 0 \\ l\leq k+N}}\zeta_i(\phi_l(D)[\Theta_i u_k\circ\Phi_{k} \zeta_i \chi-\Theta_i u_k\circ \zeta_i \chi]),\]
 \[R_2=\sum_{i}\sum_k\zeta_i((Id-\Phi_{k+N})(D)\Theta_i u_k\circ \Phi_k \zeta_i \chi),\]
and the remainders verify the estimates: 
\begin{itemize}
\item In Zygmund Spaces, for $\sigma>0$:
\[\norm{R_0}_{1+\rho +min(1+\rho,\sigma)} \leq C \norm{D\chi}_{\rho}\norm{u}_{1+\sigma}\]
\[for \ i \in \set{1,2},\norm{R_i}_{1+\rho+ \sigma} \leq C(\norm{D\chi}_{\infty}) \norm{D\chi}_{\rho}\norm{u}_{1+\sigma}. \]
\item In Sobolev Spaces, for $s>\frac{d}{2}$ we get the following estimates
	\begin{itemize} 
	\item without the diffeomorphism hypothesis:
	\[  \norm{R_0}_{H^{1+\rho +min(1+\rho,s-\frac{d}{2})}} \leq C\norm{D\chi}_{\rho}\norm{u}_{H^{1+s}} \]
	\[ \norm{R_1}_{H^{1+\rho+s}} \leq C(\norm{D\chi}_{\infty}) \norm{D\chi}_{\rho}\norm{u}_{H^{1+s}}. \]
	\item Suppose moreover that $\chi$ is a diffeomorphism:
	\[ \norm{R_2}_{H^{1+\rho+s}} \leq C(\norm{D\chi}_{\infty},\norm{D\chi^{-1}}_{\infty}) \norm{D\chi}_{\rho}\norm{u}_{H^{1+s}}. \]
	\end{itemize}
The same estimates hold in the local spaces.
\end{itemize}
\end{theorem}

\begin{theorem} \label{paracomposition_section Paracomposition_subsec paracomp on open sets_theorem commutation between para op and paracomp}
Consider $a \in \Gamma^m_\beta(\r^d)$, with $\beta \geq 0$ and $\chi:\Omega \rightarrow \Omega'$ a $C^{1+\rho}_{*,loc}$ proper map with $D\chi \in C^{\rho}_*$, $\rho>0$ and $1+\rho \notin \n$. Then there exists $q \in \Gamma^{m-\beta}_0(\r^d)$ such that we have the following formal symbolic calculus rule:
\begin{align*}
   \chi^\star T_a u &=op_{\omega_\chi}\bigg(\sigma_a(\chi(x),\xi)\frac{|\deter  D\chi(y)|}{\# \chi^{-1}(\chi(y))}\bigg) \chi^\star u+op_{\omega_\chi}\bigg(\sigma_q(\chi(x),\xi)\frac{|\deter  D\chi(y)|}{\# \chi^{-1}(\chi(y))}\bigg) \chi^\star u .
\end{align*}
\end{theorem}

Again to join Alinhac's work:

\begin{theorem} \label{paracomposition_section Paracomposition_subsec paracomp on open sets_theorem link with Alinhac's paracom}
 Let u be $W^{1,\infty}(\Omega)$ map and  $\chi$ be a $W^{1,\infty}$ diffeomorphism, a $C^{1+\rho}_{*,loc}$ proper map with $D\chi \in C^\rho_*$ and $\rho>0$.
Again, consider $(V_i,\Theta_i)$ a partition of unity locally finite of $\Omega'$ and $\zeta_i$ the associated functions as previously.  Put Alinhac's paracomposition operator:
 \[  \chi^* u=\sum_i \zeta_i (\zeta_i \chi)^{*}\Theta_i u \text{ then} :\]
 \[ \chi^* u=\chi^\star u+R_3,\]
 Where the remainder verifies:
 \begin{itemize}
\item In Zygmund Spaces, for $\sigma>0$:
\[\norm{R_3}_{1+\rho+ \sigma} \leq C(\norm{D \chi^{-1}}_\infty,\norm{D\chi}_{\infty}) \norm{D\chi}_{\rho}\norm{u}_{1+\sigma}. \]
\item In Sobolev Spaces, for $s>\frac{d}{2}$:
\[ \norm{R_3}_{H^{1+\rho+s}} \leq C(\norm{D \chi^{-1}}_\infty,\norm{D\chi}_{\infty}) \norm{D\chi}_{\rho}\norm{u}_{H^{1+s}}. \]
The same estimates hold in the local spaces.\\
Consider $a \in \Gamma^m_{\beta}(\r^d)$ and $q$ as in Theorem \ref{paracomposition_section Paracomposition_subsec paracomp on the euclidean space_theorem commutation para op and paracomp} then:
\begin{align*}
   \chi^\star T_a u &=T_{a^*} \chi^\star u+T_{q^*}\chi^\star u  \\
   \chi^* T_a u &=T_{a^*} \chi^* u+T_{{q'}^*} \chi^* u \text{ with } {q'} \in \Gamma^{m-\beta}_{0}(\r^d).  \\
\end{align*}
\end{itemize}
Again we have the same ``independence" of the definition of the operator $\chi^*$ (modulo a more regular term) with respect to the arbitrary choices made, more precisely, making a different choice $(V'_i,\Theta'_i,\zeta_i')$ which gives a different operator $\chi^{*}_1$ then
\[ \forall u \in C^{\sigma}_{*}(\r^d)\cup C^{\sigma}_{*,loc}(\r^d) , \chi^* u= \chi^{*}_1 u +R'u.\]   
with $R'u \in C^\infty$.\\
\end{theorem}

\subsection{Proof}

All of the estimates given come directly for the theorems of section \ref{paracomposition_section Paracomposition_subsec paracomp on the euclidean space}. The linearisation formulas come from Equation \eqref{paracomposition_section Paracomposition_subsec paracomp on open sets eq 1} and the linearisation theorems in section \ref{paracomposition_section Paracomposition_subsec paracomp on the euclidean space}. The only thing left to prove is the independency result with respect to the choice of $(V_i,\Theta_i,\zeta_i)$.
We start by the following lemma:
\begin{lemma} \label{paracomposition_section Paracomposition_subsec paracomp on open sets_proof lemma change of cutt-off}
Let $(\Theta,\zeta,\tilde{\zeta})\in C^{\infty}_0(\Omega')$ be such that $\zeta=1$ on $ \chi^{-1}(\supp \Theta)$ and $\tilde{\zeta}=1$ on $\supp  \zeta$ then:
\[\sum_{k\geq0}\zeta \Phi_{k+N}(D)[( \Theta u)_k\circ \zeta \chi]=\sum_{k\geq0}\tilde{\zeta} \Phi_{k+N}(D)[( \Theta u)_k\circ \tilde{\zeta} \chi]+F, \ F \in C^\infty\]
\end{lemma}
\begin{proof} 
Take $\Theta'\in C^{\infty}_0(\Omega')$ such that $\Theta'\circ \chi=0$ on $\supp \ \zeta$ and $\Theta'\circ \chi=1$ on $\supp \ \tilde{\zeta}- \zeta$ and compute:
\begin{align*}
&\sum_{k\geq0}\zeta \Phi_{k+N}(D)[( \Theta u)_k\circ \zeta \chi]\\
&=\sum_{k\geq0}\tilde{\zeta} \Phi_{k+N}(D)[( \Theta u)_k\circ \tilde{\zeta} \chi]+\sum_{k\geq0}(\zeta-\tilde{\zeta}) \Phi_{k+N}(D)[(\Theta u)_k\circ \tilde{\zeta} \chi] \\
&=\sum_{k\geq0}\tilde{\zeta} \Phi_{k+N}(D)[( \Theta u)_k\circ \tilde{\zeta} \chi]+\underbrace{\sum_{k\geq0}(\zeta-\tilde{\zeta}) \Phi_{k+N}(D)[(\Theta'( \Theta u)_k)\circ \tilde{\zeta} \chi]}_F.
\end{align*}
And we have by integration by parts, $\forall l\in \n$:
\[ \Theta' (\Theta u)_k=2^{-kl}\int \frac{e^{i(x'-y')\xi}}{i(x'-y')^l}\Theta'(x)\Theta(y)\phi_1(2^{-k}\xi)u(y) dy d\xi,\]
\[\text{thus}, \ \norm{\Theta' (\Theta u)_k}_\infty\leq C_l 2^{-k(l-n)}, \text{ and } F \in C^\infty.\]
\end{proof}
Given (i,j) such that $\supp \Theta_i \cap \supp \Theta'_j \ne \emptyset$ we define $\tilde{\zeta}_{i,j} \in C^{\infty}_0(\Omega)$ such that $\tilde{\zeta}_{i,j}=1$ on $ \supp \zeta_i \cup \supp \zeta'_j$.
\begin{align*}
\chi^\star u&=\sum_i \zeta_i \cdot (\zeta_i \chi)^{\star}\Theta_i u=\sum_{k\geq0}\sum_{i,j} \zeta_i \Phi_{k+N}(D)[( \Theta_i \Theta'_j u)_k\circ \zeta_i\chi] \\
		&=\sum_{k\geq0}\sum_{i,j} \tilde{\zeta}_{i,j}\Phi_{k+N}(D)[( \Theta_i \Theta'_j u)_k\circ \tilde{\zeta}_{i,j}\chi] +F, \ F \in C^\infty \\
		&=\sum_{k\geq0}\sum_{i,j} \zeta'_j\Phi_{k+N}(D)[( \Theta_i \Theta'_j u)_k\circ \zeta'_j\chi]+F+F', \ F' \in C^\infty \\
		&=\chi_1^\star u +F+F',
\end{align*}
which gives the desired result and ends the proof.\\


\begin{thebibliography}{9}
\bibitem{Alazard11}
T. Alazard, N. Burq, C. Zuily:
\textit{On the water waves equations with surface tension}, Duke Math. J. 158(3), 413-499 (2011).
\\
\bibitem{Alazard14}
T. Alazard, N. Burq, C. Zuily:
\textit{On the Cauchy problem for gravity water waves}, Invent. Math., 198 (2014), 71-163.
\\
\bibitem{Alinhac86}
S. Alinhac: \textit{Paracomposition et operateurs paradifferentiels}, 
Communications in Partial Differential Equations, 1986,11:1, 87-121.
  \\ 
  \bibitem{Alinhac07}
S. Alinhac and P. G\'erard: \textit{Pseudodifferential Operators and the Nash-Moser Theorem}, 
Graduate Studies in Mathematics
Volume: 82, 2007.
  \\
    \bibitem{Bauer19}
M. Bauer, M. Bruveris, E. Cismas, J. Escher, B. Kolev.: \textit{Well-posedness of the EPDiff equation with a pseudo-differential inertia operator.}
2019. hal-02352635
  \\
  \bibitem{Bony81}
J-M. Bony: \textit{Calcule symbolique et propagation des singularit\'es pour les \'equations aux d\'eriv\'ees partielles non-lin\'eaires}, 
Ann. Scient. de l'Ecole Norl. Sup., 14(1981), 209-246.
  \\
  
\bibitem{Bony79}
J-M. Bony: \textit{Propagation des singularit\'es pour les \'equations aux d\'eriv\'es partielles non-lin\'eaires}, 
Sem. Goulaouic-Meyer-Schwartz, 1979-80, n°22.
  \\
  
\bibitem{Bony82}
J-M. Bony: \textit{Interaction des singularit\'es pour les \'equations aux d\'erivées partielles non-lin\'eaires}, 
Sem. Goulaouic-Meyer-Schwartz, 1981-82, n°2.
  \\
  
\bibitem{Bony83}
J-M. Bony: \textit{Interaction des singularit\'es pour les \'equations de Klein-Gordon non-lin\'eaires}, 
Sem. Goulaouic-Meyer-Schwartz, 1983-84, n°10.
\\
\bibitem{Bourdaud88}
G\'erard Bourdaud: \textit{Une algebre maximale d'operateurs pseudo-differentiels d'operateurs pseudo-differentiels}, Communications in Partial Differential Equations, 13:9, 1059-1083.
\\
 \bibitem{Chandler17}
S. N. Chandler-Wilde, D. P. Hewett, A. Moiola: \textit{Sobolev spaces on non-Lipschitz subsets of $\r^n$ with application to boundary integral equations on fractal screens}, arXiv:1607.01994, 2017.
\\
 \bibitem{Hormander71}
L. H{\"o}rmander: \textit{Fourier integral operators. I}, 
Acta Math. 127 (1971), 79-183.
 \\
  \bibitem{Hormander88}
L. H{\"o}rmander: \textit{Pseudo-differential operators of type 1,1}, 
Communications in Partial Differential Equations, 13:9, 1085-1111.
 \\
 \bibitem{Hormander97}
L. H{\"o}rmander: \textit{Lectures on nonlinear hyperbolic differential equations}, 
Berlin ; New York : Springer, 1997.
 \\
  \bibitem{Inci18}
  H. Inci, T. Kappeler, and P. Topalov.
  \textit{On the Regularity of the Composition of Diffeomorphisms},
volume 226
of Memoirs of the American Mathematical Society.
\\
\bibitem{Kato88}
  T. Kato, G. Ponce.
  \textit{Commutator estimates and the Euler and Navier-Stokes equations},
Comm. Pure Appl. Math. 41 (1988) 891-907.
 \\
  \bibitem{Leichtnam84}
E. Leichtnam: \textit{Front d'onde d'une sous vari\'et\'e; propagation des singularit\'es pour des \'equations aux d\'eriv\'ees partielles non lin\'eaires}, 
Th\`ese de 3\'eme cycle, Universit\'e Paris XI, Orsay.
  \\
 \bibitem{Metivier08}
G. M\'etivier: \textit{paradifferential calculus and applications to the Cauchy problem for non linear systems}, Ennio de Giorgi Math. res. Center Publ., Edizione della Normale, 2008.\\

 \bibitem{Meyer81}
Y. Meyer: \textit{Remarques sur un th\'eo\`eme de J.M. Bony}, Suppl. Rend. Circ. Mat. Palermo , $n^\circ 1$,1981,1-20.
\\
\bibitem{Ayman19}
A. R. Said:
\textit{A geometric proof to the Quasi-linearity of the Water-Waves system and the incompressible Euler equations}, arXiv preprint, arXiv:2002.02940.
\\
 \bibitem{Ayman20}
A. R. Said:
\textit{Regularity results on the flow map of periodic dispersive Burgers type equations and the Gravity-Capillary equations}, In preparation.
\\
\bibitem{Shnirelman05}
A. Shnirelman: \textit{Microglobal Analysis of the Euler Equations}, J. math. fluid mech. (2005) 7(Suppl 3): S387. \url{https://doi.org/10.1007/s00021-005-0167-5}.
\\
\bibitem{Taylor07}
  M. E. Taylor: \textit{Tools for PDE: Pseudodifferential Operators, Paradifferential Operators, and Layer Potentials},
 American Mathematical Soc., 2007.
 
 
  
\end{thebibliography}
\end{document}